

 \documentclass[11pt,twoside,reqno,centertags,draft]{amsart}

\setcounter{page}{1}



\thanks{AMS Subject Classifications:  60G22, 35S10, 26A33, 33C15}

 \usepackage{amsmath,amsthm,amsfonts,amssymb}
\usepackage[mathscr]{euscript}
 \usepackage{hyperref}
 \usepackage{mathrsfs}
\usepackage{graphicx}
\usepackage{color}
  \usepackage{epsfig}

 \pagestyle{myheadings}
\thispagestyle{empty}
    \textwidth = 6 true in
   \textheight = 8   true in

 \voffset= -20 true pt
 \oddsidemargin=0true in
 \evensidemargin=0true in

\usepackage{mathtools}
\usepackage{comment}

\newtheorem{theorem}{Theorem}[section]

\newtheorem{coro}{Corollary}[section]
\newtheorem{definition}{Definition}[section]
\newtheorem{lem}{Lemma}[section]

\newtheorem{remark}{Remark}[section]

\newcommand{\be}{\begin{eqnarray}}
\newcommand{\ee}{\end{eqnarray}}

\newcommand{\R}{\mathbb{R}}
\newcommand{\C}{\mathbb{C}}
\newcommand{\N}{\mathbb{N}}

\def\e{{\rm e}}

\begin{document}

\title{ Generalized random processes related to Hadamard operators and Le~Roy measures }
\date{}
\maketitle

\vspace{ -1\baselineskip}

{\small
\begin{center}
 {\sc Luisa Beghin}\footnote{Corresponding author}\\
 Department of Statistical Sciences, Sapienza University of
Rome \\
p.le A.Moro 5, 00185, Rome, Italy\\[10pt]
{\sc Lorenzo Cristofaro}\\
Department of Mathematics, Luxembourg University\\
 6 avenue de la Fonte,
L-4364, Esch-sur-Alzette, Luxembourg\\[10pt]
{\sc Federico Polito}\\
Department of Mathematics ``G.~Peano'', University of Torino\\
via Carlo Alberto 10, 10123, Turin, Italy \\[10pt]
 
\end{center}
}

\numberwithin{equation}{section}
\allowdisplaybreaks

 \smallskip

 \begin{quote}
\footnotesize
{\bf Abstract.}
The definition of generalized random processes in
Gel'fand sense allows to extend well-known
stochastic models, such as the fractional
Brownian motion, and study the related fractional
pde's, as well as stochastic differential
equations in distributional sense.
By analogy with the construction
(in the infinite-dimensional white-noise space)
of the latter, we introduce two processes
defined by means of Hadamard-type fractional
operators. When used to replace the time
derivative in the governing p.d.e.'s,
the Hadamard-type derivatives are usually
associated with ultra-slow diffusions.
On the other hand, in our construction,
they directly determine
the memory properties of the so-called
Hadamard fractional Brownian motion (H-fBm)
and its long-time behaviour. Still,
for any finite time horizon, the H-fBm
displays a standard diffusing feature.
We then extend the definition of the
H-fBm from the white noise space to an
infinite dimensional grey-noise space
built on the Le Roy measure, so that our
model represents an alternative to the
generalized grey Brownian motion.
In this case, we prove that the
one-dimensional distribution of the
process satisfies a heat equation with
non-constant coefficients and fractional
Hadamard time-derivative.
Finally, once proved the existence of
the distributional derivative
of the above defined processes and derived
an integral formula for it,
we construct an Ornstein-Uhlenbeck type
process and evaluate its distribution.

\end{quote}

\section{Introduction}
During the XX century developments in harmonic analysis lead to
the definition of infinite-dimensional linear topological spaces
(e.g.~nuclear spaces, Gel'fand triples), whose impact in analysis
and probability theory was extremely important, see
\cite{GV64}. Indeed, Bochner-Minlos theorem allows to define
probability spaces, through Gaussian or non-Gaussian measures on
such infinite-dimensional spaces, where the so-called generalized
random processes or fields exploit the notion of random variable
through distributional functionals, see
\cite{HID,SCHN}.
In this setting, the white noise space is a Gaussian space where
the random variables are indeed generalized random processes (in
Gel'fand sense), expressed by the action of a tempered generalized
function,
$\omega$, on an element of the Schwartz space of test functions
$\mathcal{S}(\mathbb{R})$. Thanks to density of Schwartz functions
in the space of square integrable functions, the latter could be
used so that
$\omega(f)=\langle \omega, f \rangle$ is a centered Gaussian
variable with variance
$\| f\|$, where
$\|\cdot\|$ is the norm of
$L^2(\R)$.
This framework enables to define well-known stochastic processes
(e.g.~Brownian motion or fractional Brownian motion (fBm)) by
choosing a specific square integrable function, or properly define
stochastic processes' derivatives in the distributional sense.

The fBm can be defined, as a generalized stochastic process, on
the white noise space
$(\mathcal{S}'(\mathbb{R}), \mathcal{B},\nu)$, where
$\mathcal{S}'(\mathbb{R})$ is the dual of
$\mathcal{S}(\mathbb{R})$,
$\mathcal{B}$ is the cylinder
$\sigma$-algebra  and
$\nu(\cdot)$ is the white noise (Gaussian) measure, as follows
(see e.g.\
\cite{GRO2}):
\begin{equation}
B_\alpha (t,\omega)
:=\langle \omega,
\mathcal{M}^{\alpha/2} _{-}1_{[0,t)}
\rangle,\qquad t\geq 0,\;\omega \in
\mathcal{S}^{\prime }
(\mathbb{R}) \label{fBm}
\end{equation}
where
\begin{equation}\label{mrie}
 ( \mathcal{M}_{-}^{\alpha/2 }fv) (x)
:=\left\{
\begin{array}{l}
C_\alpha
( \mathcal{D}_{-}^{ { \frac{1-\alpha}{2} }  }f ) (x),\qquad
\alpha \in (0,1)
\\
f(x),\qquad \qquad \qquad \qquad \; \alpha =1 \\
C_\alpha  ( \mathcal{I}_{-}^{ { \frac{\alpha-1 }{2} }  }f ) (x),\qquad
\alpha \in (1,2)
\end{array}
\right.
\end{equation}
$C_\alpha= \sin(\pi \alpha/2) \Gamma(1+\alpha)$ and
$ \mathcal{D}_{-}^{\gamma }$ (resp.\
$ \mathcal{I}_{-}^{\gamma }$) is the Riemann-Liouville right-sided
fractional derivative (resp. integral) of order
$\gamma$ (see
\cite{KIL} pp.\ 79-80, for their definitions).

The above definition in white noise space makes the wavelet
decomposition possible, as well as the consequent stochastic
integral representation permits moving average or harmonizable
representations of the fractional Brownian motion; see
\cite{AS96, MST99}. Indeed, the latter were extensively used in
applications such as time series analysis, spectrum study and in
order to introduce complex-order fractional operators for
whitening; see
\cite{AFU24, BDS21}.

The model defined in \eqref{fBm} has been extended to the
so-called
\emph{generalized grey Brownian motion} (hereafter ggBm),
by considering the definition (\ref{fBm}) in the space
$(\mathcal{S}'(\mathbb{R}), \mathcal{B},\nu_{\rho})$, where
$\nu_\rho$, for
$\rho \in (0,1)$, is the Mittag-Leffler measure, i.e.,
the unique measure satisfying
\[
\int_{\mathcal{S}'(\mathbb{R})} e^{i \langle \omega , \xi \rangle}
d\nu_{\rho}(\omega)=E_\rho
\left ( - \tfrac{1}{2} \langle \xi , \xi
\rangle
\right ), \qquad \xi \in \mathcal{S}(\mathbb{R}),
\]
where
$$
E_\rho
\left ( x
\right )
:=\sum_{j=0}^{\infty}\frac{x^j}{\Gamma(\rho
j+1)} ,
$$
 is the Mittag-Leffler function.
Indeed, the latter is the eigenfunction of the
left-sided Caputo-type fractional derivative
$ D_{+}^{\rho}$, i.e.,
\begin{equation}
D_{0+}^{\rho} E_\rho(\lambda t^\rho)=\lambda E_\rho(\lambda
t^\rho), \qquad t\geq 0, \lambda \in \mathbb{R}\label{mittlef}
\end{equation}
(see
\cite{KIL}, p.\ 98).
It is well-known that the ggBm
$B_{\alpha,\rho}
:=\left\{B_{\alpha,\rho} (t)\right\}_{t \geq 0}$
is a non-Gaussian process with zero mean and covariance function
\[
cov(B_{\alpha,\rho}(t),B_{\alpha,\rho}(s))=
\frac{1}{2\Gamma(1+\rho)}(t^{\alpha}+
s^{\alpha}-|t-s|^{\alpha}), \qquad s,t \in \mathbb{R}^+.
\]
Thus the ggBm has non-stationary increments and it is an anomalous
diffusion, since
$\mathbb{E}(B_{\alpha,\rho}(t))^2 \sim c_{\rho}t^\alpha$, where
$c_{\rho}
:=1/\Gamma(\rho+1)$. Moreover,
it displays short- (resp.\ long-) range dependence for
$\alpha \in (0,1)$ (resp.\
$\alpha \in (1,2)$) as the fBm. An alternative model has been
constructed in
\cite{BEG} and applied in
\cite{CRI}, by means of the so-called incomplete gamma measure.
Further extensions are considered in
\cite{ALP}. For definitions of different processes, on infinite
dimensional spaces, based on Poisson and Gamma measures, see also
\cite{KdSSU98}.

Our first aim is to follow a similar procedure in order to define
an analogue of the fBm (and of the ggBm) by substituting the
(right-sided) Riemann-Liouville operators by their Hadamard
counterparts, i.e.,
$^H\mathcal{D}_-^\gamma$  and
$^H\mathcal{I}_-^\gamma$, for
$\gamma= { \frac{1-\alpha}{2} } $ and
$\gamma= { \frac{\alpha-1 }{2} } $, respectively (see \eqref{dermin} and
\eqref{intmin} below with
$\mu=0$). Therefore, in our case, we will define,
 in the white-noise space, the
 \emph{Hadamard fractional Brownian motion}
 (hereafter H-fBm) as
$B^H_\alpha
:=\left\{ B^H_\alpha (t)\right\}_{t \geq 0}$, where
$B^H_\alpha (t,\omega)
:=\langle \omega,
 \thinspace ^H\mathcal{M}^{\alpha/2} _{-}1_{[0,t)} \rangle$,
$t\geq 0,\;\omega \in \mathcal{S
}^{\prime }(\mathbb{R})$, and
\begin{equation*}
 ( \thinspace ^H\mathcal{M}_{-}^{\alpha/2 }f )
(x)
:=\left\{
\begin{array}{l}
K_\alpha ( \thinspace ^H\mathcal{D}_{-}^{ { \frac{1-\alpha}{2} }
}f )
(x),\qquad \alpha \in (0,1)
\\
f(x),\hspace{3.6cm} \alpha =1 \\
K_\alpha  ( \thinspace ^H\mathcal{I}_{-}^{ { \frac{\alpha-1 }{2} }
}f )
(x),\qquad \alpha \in (1,2).
\end{array}
\right.
\end{equation*}

The Hadamard fractional derivatives are usually associated to
ultra-slow diffusions (i.e.\ with mean-squared displacement given
by a logarithmic
function of time); see, for example,
\cite{LIA}.
On the other hand, we prove that, in our construction, the H-fBm
is a (centered, Gaussian) process with
$var(B^H_\alpha (t))=t$; thus its one-dimensional distribution
coincides with that of a standard Brownian motion, for any
$\alpha$, and hence the Hadamard operator does not
affect the one-dimensional distribution. Nevertheless, the
parameter
$\alpha$ affects its auto-covariance (expressed in terms of
Tricomi’s confluent hypergeometric function), as
 well as its long-time behaviour. Indeed,
$B^H_\alpha$ presents anti-persistent or long-range dependent
increments, for
$\alpha \in (0,1)$ and
$\alpha \in (1,2)$, respectively.

We also give the following, finite-dimensional, representation of
H-fBm,  in terms of a stochastic integral, which is the analogue
of the Mandelbrot-Van Ness representation for the fBm:
  \begin{equation}
      B_{\alpha}^H (t)=\frac{1}{\sqrt{\Gamma(\alpha)}}\int
_{0}^{t}
\Big ( \log \frac{t}{s}
\Big )^{ { \frac{\alpha-1 }{2} } }dB(s), \qquad t
\geq 0, \quad \alpha \in (0,2), \notag
  \end{equation}
  where
$\left\{B(t)\right\}_{t \geq 0}$ is a standard Brownian motion.

There has recently been considerable interest in models which can
be seen as intermediate between the standard Brownian motion and
the fBm: for example, the so called ``sub-fractional Brownian
motion" (or sub-fBm) was introduced in
\cite{BOJ} and further studied in
\cite{TUD},
\cite{ARA}.
Similarly to the sub-fBm, our model enjoys the main properties of
the fBm, such as long-range dependence (in the range
$(1,2)$ of the parameter
$\alpha$), self-similarity and continuous sample paths, but it has
non-stationary increments. Moreover, in our case, the variance
coincides with that of the standard Bm and the long-range
dependence turns out to be weaker than for the fBm.

Diffusion processes such as the H-fBm could be considered as
potential candidates to model some
financial time-series which exhibit standard mean square
displacement (i.e.,
$var(B_\alpha^H(t))=t$), but with long-range
dependence and non-stationary increments.

We then extend the definition of the H-fBm to the space
$(\mathcal{S}'(\mathbb{R}), \mathcal{B},\nu_\beta)$, where
$\nu_\beta$ is the unique measure satisfying
\begin{equation}
\int_{\mathcal{S}^{\prime }}
e^{i\left\langle x,\xi \right\rangle }d\nu
_{\beta }(x)=\mathcal{R}_{\beta }
\Big ( -\frac{\langle \xi ,\xi \rangle }{2}
\Big ) \text{,\qquad }\xi \in \mathcal{S},
\end{equation}
and
$$
\mathcal{R}_\beta (x)
:=\sum_{j=0}^{\infty}x^j /(j!)^\beta ,
 \ \text{ for
$\beta \in (0,1]$,}
$$
 is the Le Roy function (see
\cite{ler},
\cite{BOU}, for details, and
\cite{GER},
\cite{gar2},
\cite{SIM} for recent generalizations).
This choice is motivated by the fact that
$\mathcal{R}_{\beta }(t)$ satisfies the following equation
\begin{equation}
{}^{H}D_{0+}^{\beta }f(t)=t
f(t), \qquad t \geq 0, \notag
\end{equation}
where
$^{H}D_{0+}^{\beta }$ is the left-sided Hadamard derivative of
Caputo type of order
$\beta$ (see \eqref{cap} below); cf.\ equation \eqref{mittlef} in
the Mittag-Leffler case.
For the Le Roy measure
$\nu_\beta$ we prove the existence of test functions and we
establish the characterization theorems for the corresponding
distribution space, after checking the Le Roy measure is analytic
on
$(\mathcal{S}', \mathcal{B})$ and its Laplace transform is
holomorphic.

In this case, we term the corresponding process
 \emph{Le Roy-Hadamard motion} (LHm for brevity)
 and denote it by
$$
B^H_{\alpha,\beta}
:= \{ B^H_{\alpha,\beta}
(t) \}_{t\geq 0} .
$$
 In the limiting case
$\beta=1$,
$B^H_{\alpha,\beta}$ and
$B^H_{\alpha}$ coincide as
the Le Roy
function reduces to the exponential function.

As we will see below, since the Le Roy measure's two moments do
not depend on
$\beta$, the covariance function (and thus the persistence and
long-range properties) of
$B^H_{\alpha,\beta}$ coincides, for any
$\beta$, with that of
$B^H_{\alpha}$.
Moreover, we prove that the one-dimensional distribution of
$B^H_{\alpha,\beta}$ satisfies the following fractional heat
equation with non-constant coefficients:
\begin{equation}
^{H}D_{0+,t}^{\beta }u(x,t)=\frac{ t}{2}\frac{\partial
^{2}}{\partial x^{2}}u(x,t), \qquad x \in \mathbb{R}, t \geq 0,
\notag
\end{equation}
with initial condition
$u(x,0)=\delta (x),$ where
$\delta (\cdot
)$ is the Dirac's delta function. This result can be compared with
the master equation, which was proved in
\cite{MUR} to be satisfied by the one-dimensional distribution of
the ggBm, and later generalized in
\cite{BEN}.

We prove the existence and derive an integral formula for the
distributional derivative of the LHm, by evaluating the
$S_{\nu_\beta}$-transform of
$B^H_{\alpha,\beta}$ and of its noise.
These results can be considered as the basis for constructing a
stochastic analysis theory driven by the LHm, by following a
Wick-type
definition of stochastic differential equations similar to
the one applied in
\cite{BOC} for the ggBm (see also
\cite{BOC2} for the vector-valued ggBm).

Finally, as a further application of the latter results, we define
an Ornstein-Uhlenbeck type process based on the LHm and evaluate
its distribution.

\section{Preliminary results}
We recall that the \textit{left-sided} and \textit{
right-sided Hadamard-type integral} are defined,
respectively, as:
\begin{align}
(^{H}\mathcal{I}_{0+, \mu}^{\gamma }f)(t)
 &
:=\frac{1}{\Gamma (\gamma )}
\int_{0}^{t}
\Big (\frac{z}{t}
\Big )^\mu
\Big ( \log \frac{t}{z}
\Big ) ^{\gamma -1}
\frac{f(z)}{z}dz,
\label{hadint} \\
(^{H}\mathcal{I}_{-, \mu}^{\gamma }f)(t)
&
:=\frac{1}{\Gamma (\gamma )}
\int_{t}^{\infty }
\Big (\frac{t}{z}
\Big )^\mu
\Big ( \log \frac{z}{t}
\Big ) ^{\gamma -1}\frac{f(z)}{z}
dz,  \label{intmin}
\end{align}
for
$t>0$,
$\gamma, \mu \in \mathbb{C},$
$\Re(\gamma )>0$, where
$\Re(\cdot)$ denotes the real part (see
\cite{KIL}, equations (2.7.5)-(2.7.6)).
The \textit{left-sided Hadamard-type derivative}
of order
$\gamma \geq 0$ and parameter
$\mu \in \mathbb{C}$ is
defined as
\begin{equation}
(^{H}\mathcal{D}_{0+, \mu}^{\gamma }f)(t)
:=t^{-\mu}
\Big ( t
\frac{d}{dt}
\Big ) ^{n}\left[t^\mu (^{H}
\mathcal{I}_{0+, \mu}^{n-\gamma }f)(t)\right],
 \label{had}
\end{equation}
while the \textit{right-sided
Hadamard-type derivative}
 is given by
\begin{equation}
(^{H}\mathcal{D}_{-, \mu}^{\gamma }f)
(t)
:=t^{\mu}
\Big ( -t
\frac{d}{dt}
\Big ) ^{n}\left[t^{-\mu}
(^{H}\mathcal{I}_{-, \mu}^{n-\gamma }f)(t)\right],
   \label{dermin}
\end{equation}
where
$\gamma \notin \mathbb{N}$,
$\Re(\gamma)>0$,
$n=\left\lfloor \gamma \right\rfloor +1$ and
$t>0$ (see (2.7.11) and
(2.7.12) in
\cite{KIL}, for
$a=0$ and
$b=\infty )$. When
$\gamma =m$, for
$m \in \mathbb{N}$,
$$
(^{H}\mathcal{D}_{0+, \mu}^{\gamma }f)(t)
:=t^\mu
\Big ( t
\frac{d}{dt}
\Big ) ^{m}(t^{-\mu}f(t))
$$
 and
$$
(^{H}\mathcal{D}_{-, \mu}^{\gamma }f)(t)
:=(-1)^mt^\mu
\Big ( t
\frac{d}{dt}
\Big ) ^{m}(t^{-\mu}f(t)) .
$$
 Hereafter, we will write for brevity, in the case
$\mu=0$,
$^{H}\mathcal{I}_{0+}^{\gamma }
:=\thinspace^{H}\mathcal{I}_{0+, 0}^{\gamma }$,
$^{H}\mathcal{I}_{-}^{\gamma }
:=\thinspace^{H}\mathcal{I}_{-, 0}^{\gamma }$,
$^{H}\mathcal{D}_{0+}^{\gamma }
:=\thinspace^{H}\mathcal{D}_{-, 0}^{\gamma }$ and
$^{H}\mathcal{D}_{-}^{\gamma }
:=\thinspace^{H}\mathcal{D}_{-, 0}^{\gamma }$.

\begin{remark}\rm
\label{rem:AbsoutelycontinousSchwartz}
The operators introduced above are well
defined in the space
\[
X_\mu^p
:=\Big \{ h:
\Big (\int_0 ^{\infty}
|z^\mu h(z)|^p \frac{dz}{z}
\Big )^{1/p}
< \infty , p \in [1, \infty),
 \mu \in \mathbb{R} \Big \},
\]
which, for
$\mu=1/p$ reduces to the well-known
$L^p (\mathbb{R}^+)$ (see
\cite{BUT} and
\cite{KIL} for more details).
    For
$\gamma \in (0,1)$,
$\mu=0$, the domain of the above
left-sided Hadamard operators contains
$AC[0,T]$ (see
\cite{KIL}, p.\ 3). In view of what
follows, we note that a Schwartz function
$\xi(\cdot)$ can be embedded in the
space of absolutely continous functions as it holds
$\|\xi \|_{AC[0,T]}
\leq \|\xi\|_{0,0}+T\|\xi\|_{0,1}$,
where
$\|\cdot \|_{AC[0,T]}$ is the norm of
$AC[0,T]$ and
$\{\|\cdot \|_{r,s}, r,s \in \N \}$
is the family of norms of the Schwartz space
$\mathcal{S}(\R)$.
\end{remark}

We also recall the
\textit{left and right-sided Hadamard
derivative of Caputo type} of order
$
\gamma \in (0,1)$,
which are respectively defined as follows:
\begin{eqnarray}
(^{H}D_{0+}^{\gamma }f)(t) &
:=&\frac{1}{\Gamma (1-\gamma
)}\int_{0}^{t}
\Big ( \log \frac{t}{z}
\Big ) ^{-\gamma
}\frac{d}{dz}f(z)dz,  \label{cap} \\
(^{H}D_{-}^{\gamma }f)(t) &
:=&-\frac{1}{\Gamma (1-\gamma )}\int_{t}^{\infty }
\Big ( \log \frac{z}{t}
\Big ) ^{-\gamma}\frac{d}{dz}
f(z)dz,
\end{eqnarray}
(see
\cite{GAR}; the relationship between
$^{H}D_{a+}^{\gamma }$ and
$^{H}\mathcal{D}_{a+}^{\gamma }$ is given in
\cite{KIL03}, Theorem 3.2, for
$a>0$).

Finally, in the last section of the paper,
we will apply the following relationship
\begin{equation}
^{H}\mathcal{D}_{0+,\mu}^{\gamma}f
\equiv \thinspace^H\mathbb{D}_{0+,\mu}^\gamma f,
   \label{MH}
\end{equation}
which holds for any
$f \in X_c^p$ between
$\thinspace ^{H}\mathcal{D}_{0+,\mu}^{\gamma}$,
given in \eqref{had}, and the left-sided
Marchaud-Hadamard type derivative
\begin{align}
 \label{MH2}
& (\thinspace^H\mathbb{D}_{0+,\mu}^\gamma
f )(x) \\
\notag
:\! &=  \frac{\gamma}{\Gamma(1-\gamma)}
\lim_{\epsilon \to 0^+} \int^x _{\epsilon}
\left (\frac{z}{x}
\right )^\mu
\left (\log\frac{x}{z}
\right )^{-\gamma -1}
\left[f(x)-f(z)\right]
\frac{dz}{z}+\mu^\gamma f(x), \notag  \\
&= \frac{\gamma}{\Gamma(1-\gamma)}
\lim_{\epsilon \to 0^+}
\int^{\infty} _\epsilon e^{-\mu z}
\frac{f(x)-f(xe^{-z})}{z^{1+\gamma}}dz
+\mu^\gamma f(x), \notag
\end{align}
for
$x>0$,
$0<\gamma<1$ and
$ \mu \in \mathbb{R}$ (see
\cite{KIL2}, equation (1.4)).

In the following and  analogously
to the construction of the fBm,
we will define a process by
replacing the classical
Riemann-Liouville operators with
the Hadamard ones (given in (\ref{had})
and (\ref{dermin})); to this aim we will
need the following preliminary results.

\begin{lem}\label{lem2.1}
Let
$^{H}\mathcal{D}_{-}^{ { \frac{1-\alpha}{2} }  }$
be the right-sided Hadamard derivative
defined in $(\ref{dermin}),$ then  for
$x\in \mathbb{R}_{+}$ and
$0 \leq a<b,$
\begin{equation}
\Big  ( ^{H}\mathcal{D}_{-}^{ { \frac{1-\alpha}{2} }  }
1_{[a,b)}
\Big  ) (x)=\frac{1}{\Gamma
((\alpha+1)/2 )}
\Big [
\Big  ( \log \frac{b}{x}
\Big  ) _{+}^{ { \frac{\alpha-1 }{2} }  }-
\Big  (
\log \frac{a}{x}
\Big  ) _{+}^{ { \frac{\alpha-1 }{2} } }
\Big ] ,
 \label{had4}
\end{equation}
where
$
\left ( x
\right ) _{+}
:=x1_{x\geq 0},$ and
$^{H}\mathcal{D}_{-}^{ { \frac{1-\alpha}{2} }
}1_{[a,b)}\in L^{2}(\mathbb{R}_{+}),$
  for
$\alpha \in (0,1).$
  Analogously,
let
$^{H}\mathcal{I}_{-}^{\alpha/2 }$
be the right-sided Hadamard integral
defined in $(\ref{intmin}),$ then
\begin{equation}
\left ( ^{H}\mathcal{I}_{-}^{ { \frac{\alpha-1 }{2} }  }
1_{[a,b)}
\right ) (x)=\frac{1}
{\Gamma ((\alpha+1)/2)}
\Big [
\Big ( \log \frac{b}{x}
\Big ) _{+}
^{ { \frac{\alpha-1 }{2} }  }-
\Big ( \log \frac{a}{x}
\Big ) _{+} ^{ { \frac{\alpha-1 }{2} }  }
\Big ] ,  \label{had6}
\end{equation}
and
$^{H}\mathcal{I}_{-}^{\alpha/2 }
1_{[a,b)}\in L^{2}(\mathbb{R}_{+}),$
 for
$\alpha \in (1,2).$
\end{lem}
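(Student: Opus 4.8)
The plan is to reduce both identities to a single evaluation of the right-sided Hadamard integral of an indicator, and then obtain the derivative formula by one differentiation. Since $\alpha \in (0,1)$ gives $(1-\alpha)/2 \in (0,1/2)$, in definition (\ref{dermin}) we have $n = \lfloor (1-\alpha)/2 \rfloor + 1 = 1$ and $\mu = 0$, so that ${}^{H}\mathcal{D}_{-}^{(1-\alpha)/2} = \left(-t\frac{d}{dt}\right){}^{H}\mathcal{I}_{-}^{(1+\alpha)/2}$. Hence both (\ref{had4}) and (\ref{had6}) follow once ${}^{H}\mathcal{I}_{-}^{\gamma}1_{[a,b)}$ is known in closed form, for $\gamma = (\alpha-1)/2$ and $\gamma = (1+\alpha)/2$ respectively.

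First I would evaluate the integral directly from (\ref{intmin}). Writing $({}^{H}\mathcal{I}_{-}^{\gamma}1_{[a,b)})(t) = \frac{1}{\Gamma(\gamma)}\int_{\max(t,a)}^{b}(\log(z/t))^{\gamma-1}\frac{dz}{z}$ whenever $t<b$ (and $0$ otherwise), the substitution $u=\log(z/t)$, $du=dz/z$, turns the inner integral into $\int u^{\gamma-1}\,du$, giving $u^{\gamma}/\gamma$ evaluated at the endpoints $\log(b/t)$ and $\log(\max(t,a)/t)$. Note that this is legitimate even for $a=0$ (the case relevant to $1_{[0,t)}$), since the domain of integration never reaches the origin. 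Splitting into the cases $t\ge b$, $a\le t<b$ and $0<t<a$ and collecting the results with the notation $(\cdot)_+$ yields the single formula
\begin{equation*}
({}^{H}\mathcal{I}_{-}^{\gamma}1_{[a,b)})(t)=\frac{1}{\Gamma(\gamma+1)}\left[\left(\log\frac{b}{t}\right)_+^{\gamma}-\left(\log\frac{a}{t}\right)_+^{\gamma}\right],
\end{equation*}
valid for every $\gamma>0$. Taking $\gamma=(\alpha-1)/2$ (so that $\gamma+1=(\alpha+1)/2$) gives (\ref{had6}) at once.

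To obtain (\ref{had4}) I would insert $\gamma=(1+\alpha)/2$ into the displayed formula and apply $-t\,d/dt$. On the region where $\log(b/t)>0$ one has $-t\frac{d}{dt}\left(\log\frac{b}{t}\right)^{\beta}=\beta\left(\log\frac{b}{t}\right)^{\beta-1}$, and likewise for the $a$-term, so with $\beta=(1+\alpha)/2$ the differentiation lowers the exponent to $(\alpha-1)/2$ and pulls out a factor $(1+\alpha)/2$. Using $\Gamma(\beta+1)=\beta\,\Gamma(\beta)$, the constant $\frac{(1+\alpha)/2}{\Gamma((3+\alpha)/2)}$ collapses to $1/\Gamma((\alpha+1)/2)$, which is exactly (\ref{had4}). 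The only point needing care is that the derivative is taken in the interior of each of the intervals $\{t<a\}$, $\{a<t<b\}$ and $\{t>b\}$; the $(\cdot)_+$ truncation makes the boundary contributions consistent, and the isolated points $t=a,b$ are negligible for the $L^2$ statement.

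Finally I would verify the $L^{2}(\mathbb{R}_+)$ membership. In both cases the function vanishes for $x\ge b$, so only integrability on the bounded set $(0,b)$ is at stake. For the integral (\ref{had6}) with $\alpha\in(1,2)$ the exponent $(\alpha-1)/2$ is positive, the two log-powers are bounded on $(0,b)$, and square-integrability is immediate. For the derivative (\ref{had4}) with $\alpha\in(0,1)$ the exponent $(\alpha-1)/2\in(-1/2,0)$ is negative, and the main point is the behaviour at the endpoints: near $x=b^-$ one has $(\log(b/x))^{(\alpha-1)/2}\sim((b-x)/b)^{(\alpha-1)/2}$, whose square behaves like $(b-x)^{\alpha-1}$ and is integrable precisely because $\alpha-1>-1$; the same estimate controls the $a$-term near $x=a^-$. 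As $x\to0^+$ both log-arguments tend to $+\infty$, so each truncated power tends to $0$ and the integrand stays bounded there. I expect this endpoint integrability, together with the bookkeeping of the $(\cdot)_+$ cut-offs in the differentiation step, to be the only genuinely delicate part; everything else is a direct substitution and a one-line differentiation.
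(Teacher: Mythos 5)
Your proof is correct in substance and follows essentially the same route as the paper: a direct evaluation of the right-sided Hadamard integral of the indicator via the substitution $u=\log(z/t)$ with a case split on the position of $t$ relative to $a$ and $b$, followed by the one differentiation $-t\,d/dt$ coming from definition (\ref{dermin}) with $n=1$, and an endpoint analysis for the $L^2$ claim. Your only organizational difference is that you compute ${}^{H}\mathcal{I}_{-}^{\gamma}1_{[a,b)}$ once for general $\gamma>0$ and deduce both identities from it, whereas the paper treats the derivative and integral cases separately; this is a mild economy, not a different method.

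One assertion in your $L^2$ verification is wrong as stated, though the conclusion survives. For $\alpha\in(1,2)$ the exponent $(\alpha-1)/2$ is \emph{positive}, so $\bigl(\log(b/x)\bigr)^{(\alpha-1)/2}$ is \emph{not} bounded on $(0,b)$: it diverges as $x\to 0^+$ (and in the case $a=0$, which is the one actually used for the process, there is no cancelling $a$-term). Square-integrability still holds because the divergence is only logarithmic: substituting $\omega=\log(b/x)$ gives
\begin{equation*}
\int_0^b \Bigl(\log\frac{b}{x}\Bigr)^{\alpha-1}dx \;=\; b\int_0^\infty \omega^{\alpha-1}e^{-\omega}\,d\omega \;=\; b\,\Gamma(\alpha)<\infty ,
\end{equation*}
which is exactly the computation the paper records in (\ref{int2}). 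Replace the boundedness claim with this integral and your argument is complete.
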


\begin{proof}[\bf Proof]
We obtain formula (\ref{had4}) as follows, for
$0< x\leq a<b,$
\begin{align*}
\Big ( ^{H}\mathcal{D}_{-}^{ { \frac{1-\alpha}{2} }  }1_{[a,b)}
\Big ) (x) &= -\frac{x}{\Gamma
((\alpha+1)/2 )}\frac{d}{dx}\int_{a}^{b}
\Big ( \log \frac{z}{x}
\Big ) ^{ { \frac{\alpha-1 }{2} }
}\frac{dz}{z} \\
&=  -\frac{x}{\Gamma ((\alpha+1)/2 )}
\frac{d}{dx}\int_{\log (a/x)}^{\log
(b/x)}\omega ^{ { \frac{\alpha-1 }{2} }  }d\omega \\
&= \frac{1}{\Gamma ((\alpha+1)/2 )}
\Big [
\Big ( \log \frac{b}{x}
\Big )
^{ { \frac{\alpha-1 }{2} }  }-
\Big ( \log \frac{a}{x}
\Big  ) ^{ { \frac{\alpha-1 }{2} } }\Big ] .
\end{align*}
For
$0 \leq a<x<b,$ we have instead
\begin{align*}
&
\Big ( ^{H}\mathcal{D}_{-}^{ { \frac{1-\alpha}{2} }  }1_{[a,b)}
\Big ) (x)\\
&= -\frac{x}{\Gamma
((\alpha+1)/2 )}\frac{d}{dx}\int_{x}^{b}
\Big ( \log \frac{z}{x}
\Big ) ^{ { \frac{\alpha-1 }{2} }
}\frac{dz}{z}=\frac{1}{\Gamma ((\alpha+1)/2 )}
\Big ( \log \frac{b}{x}
\Big )
^{ { \frac{\alpha-1 }{2} }  },
\end{align*}
while for
$x \geq b>a$, both terms in (\ref{had4}) vanish. In
order to check the integrability properties of
$^{H}\mathcal{D}_{-}^{ { \frac{1-\alpha}{2} }  }1_{[a,b)},$
 we evaluate
\begin{align}
\int_{0}^{\infty }
\Big ( ^{H}\mathcal{D}_{-}^{ { \frac{1-\alpha}{2} }  }1_{[a,b)}
\Big )
^{2}(x)dx &= \frac{1}{\Gamma ((\alpha+1)/2)^{2}}\int_{a}^{b}
\Big ( \log
\frac{b }{x}
\Big ) ^{\alpha-1 }dx  \label{had5} \\
&=  \frac{b}{\Gamma ((\alpha+1)/2 )^{2}}
\int_{0}^{\log (b/a)}\omega ^{\alpha-1
}e^{-\omega }d\omega <\infty ,  \notag
\end{align}
for
$a<x<b<\infty
$ and, analogously, for the other cases$.$ In the case
$a=0,$ (\ref{had5}) gives
\begin{equation}
\int_{0}^{\infty }
\Big ( ^{H}\mathcal{D}_{-}^{ { \frac{1-\alpha}{2} }  }1_{[0,b)}
\Big )
^{2}(x)dx=\frac{b\Gamma (\alpha )}
{\Gamma ((\alpha+1)/2 )^{2}}<\infty , \label{der2}
\end{equation}
under the condition
$\alpha \in (0,1).$

Formula (\ref{had6}) is proved as follows, for
$0< x\leq a<b,$
\begin{align*}
\Big ( ^{H}\mathcal{I}_{-}^{ { \frac{\alpha-1 }{2} }  }1_{[a,b)}
\Big ) (x) & = \frac{1}{\Gamma
( { \frac{\alpha-1 }{2} }  )}\int_{a}^{b}
\Big ( \log \frac{z}{x}
\Big ) ^{\frac{\alpha-3}{2}}\frac{dz}{z}
\\
&= \frac{1}{\Gamma ((\alpha+1)/2)}
\Big [
\Big ( \log \frac{b}{x}
\Big )
^{ { \frac{\alpha-1 }{2} }  }-
\Big ( \log \frac{a}{x}
\Big ) ^{ { \frac{\alpha-1 }{2} }  }
\Big ]
\end{align*}
and analogously in the other cases. Finally,
\begin{equation}
\int_{0}^{\infty }
\Big ( ^{H}
\mathcal{I}_{-}^{ { \frac{\alpha-1 }{2} }  }1_{[0,b)}
\Big ) ^{2}(x)dx=\frac{b \Gamma(\alpha)}{\Gamma
((\alpha+1)/2 )^{2}}<\infty . \label{int2}
\end{equation}
\end{proof}

\section{Hadamard fractional Brownian motion}

Let
$\nu(\cdot)$ be the Gaussian measure on the space
$
\left ( \mathcal{S}
^{\prime }(\mathbb{R}),\mathcal{B}
\right )$, where
$\mathcal{B}$ is the
$\sigma$-algebra generated by the cylinder sets on
$\mathcal{S}
^{\prime }(\mathbb{R})$, i.e.,
 the unique probability measure such that
\begin{equation}
\int_{\mathcal{S}'(\mathbb{R})}
e^{i\langle x,\xi\rangle} d\nu(x)
=e^{-\frac{1}{2}\langle\xi,\xi \rangle},
 \qquad \xi \in \mathcal{S}(\mathbb{R}).
  \label{cf}
\end{equation}

Recall that for
$\nu(\cdot)$ and
$\xi, \theta \in \mathcal{S}(\mathbb{R})$,
the following hold (see
\cite{OBA}):
\begin{align}
 & \int_{\mathcal{S}'(\mathbb{R})}
 \langle x,\xi\rangle^{2n}d\nu(x)
 =
 \frac{(2n)!}{2^n n!}
 \langle \xi, \xi \rangle^n,
 \quad
 \int_{\mathcal{S}'(\mathbb{R})}
 \langle x,\xi\rangle^{2n+1}d\nu(x)
 =
 0, \label{meanodd} \\
    &  \int_{\mathcal{S}'(\mathbb{R})}
 \langle x,\xi\rangle
 \langle x, \theta \rangle d\nu(x)
 =
 \langle  \xi, \theta \rangle.  \label{autocov}
\end{align}
Thus, for any
$\phi \in \mathcal{S}(\mathbb{R})$ and
$\omega \in \mathcal{S}'(\mathbb{R})$,
we  define the random variable
$X(\phi, \omega)
:=
 \langle\omega, \phi \rangle$,
 which will be denoted, for brevity as
$X(\phi)$. As a consequence of
\eqref{cf} and
\eqref{meanodd}, the following hold, for any
$\phi, \xi \in \mathcal{S}(\mathbb{R})$
and
$k \in \mathbb{R}$,
\begin{equation}
    \mathbb{E}e^{ikX(\phi)}
 =
 e^{-\frac{k^2}{2}
 \left\Vert \phi \right \Vert^2}, \label{cf2}
    \end{equation}
\begin{equation}
\mathbb{E}e^{ik\left[X(\phi)-X(\xi)\right]}
 =
 e^{-\frac{k^2}{2}\left\Vert\phi-\xi\right\Vert},
   \label{cf3}
\end{equation}
\begin{equation}\mathbb{E}
\left[ X(\phi)^2
\right]
 =
 \left\Vert \phi \right \Vert^2, \label{va}
\end{equation}
where
$\left\Vert \cdot \right\Vert^2
:=
 \langle \cdot,\cdot \rangle.$
It follows from \eqref{va} that the definition of
$X(\cdot)$ can be easily extended to any function in
$L^2(\mathbb{R})$ (see, for example
\cite{BOC}). Thus, by considering
Lemma \ref{lem2.1},
we are able to give the following definition.

\begin{definition}\label{def3.1}
Let
$^{H}\mathcal{D}_{-}^{ { \frac{1-\alpha}{2} }  }$ and
$^{H}\mathcal{I}_{-}^{ { \frac{\alpha-1 }{2} }  }$ be
the right-sided Hadamard derivative and
integral defined in $(\ref{dermin})$
and $(\ref{intmin}),$ respectively.
Then we define, on the probability space
$
\left ( \mathcal{S}
^{\prime }(\mathbb{R}),\mathcal{B}, \nu
\right )
$, the \textit{Hadamard-fractional
Brownian motion}
$($hereafter Hadamard-fBm$)$
$
B^H_{\alpha }
:=\left\{ B^H_{\alpha
}(t)\right\} _{t\geq 0}$ as
\begin{equation}
B_{\alpha }^H (t,\omega )
:= \big
 \langle \omega , \prescript{H}{}{\mathcal{M
}}_{-}^{\alpha/2 }1_{[0,t)}\big \rangle ,
\qquad t\geq 0,\;\omega \in \mathcal{S
}^{\prime }(\mathbb{R}), \label{Hfbm}
\end{equation}
where
\begin{equation}
\big ( ^{H}\mathcal{M}_{-}^{\alpha/2 }f
\big ) (x)
:=\left\{
\begin{array}{l}
K_{\alpha}
\big ( ^{H}\mathcal{D}_{-}^{ { \frac{1-\alpha}{2} }  }f
\big ) (x),\qquad \alpha \in (0,1)
\\
f(x),\qquad \alpha
 =
 1 \\
K_{\alpha}
\big ( ^{H}\mathcal{I}_{-}^{ { \frac{\alpha-1 }{2} }  }f
\big ) (x),\qquad \alpha \in (1,2)
\end{array}
\right. \label{ma}
\end{equation}
for
$K_{\alpha}
 =
 \Gamma ((\alpha+1)/2)/\sqrt{\Gamma(\alpha)}$.
\end{definition}

In view of what follows,
we recall the Tricomi's confluent
hypergeometric function (see
\cite{NIST}, formula (13.2.42)) defined as
\[
\Psi
\left (a, b ; z
\right )
:=\frac{\Gamma(1-b)}{\Gamma(1+a-b)}
\Phi(a,b;z)+\frac{\Gamma(b-1)}
{\Gamma(a)}\Phi(1+a-b,2-b;z),
\]
for
$a,b,z \in \mathbb{C}$,
$\Re(b) \neq 0, \pm 1, \pm 2,...$,
where
$$
\Phi(a,b;z)
:=\sum_{l=0}^{\infty}
\frac{(a)_l}{(b)_l}\frac{z^l}{l!}
 \ \text{ and }
 (c)_l
:=\frac{\Gamma(c+l)}{\Gamma(c)} .
$$
In what follows, we will restrict to the case
$\Psi
\left (a, b ; z
\right )$, for
$a,b,z \in \mathbb{R}$.

We recall that the following
asymptotic behaviors hold, as
$z \rightarrow 0$ (see
\cite{NIST}, formulae (13.2.22), (13.2.20),
and (13.2.18), respectively):
\begin{equation}
\Psi(a,b;z)
 =
 \frac{\Gamma(1-b)}{\Gamma(a-b+1)}+O(z),
 \qquad \mathcal{R}(b) <0, \label{asy0}
\end{equation}
\begin{equation}
\Psi(a,b;z)
 =
 \frac{\Gamma(1-b)}{\Gamma(a-b+1)}+
 O(z^{1-\mathcal{R}(b)}), \qquad
 \mathcal{R}(b) \in (0,1), \label{asy1}
\end{equation}
\begin{equation}
\Psi(a,b;z)
 =
 \frac{\Gamma(1-b)}
{\Gamma(a-b+1)}+\frac{\Gamma(b-1)}
{\Gamma(a)}z^{1-b}+O(z^{2-\mathcal{R}(b)}),
 \qquad \mathcal{R}(b) \in (1,2). \label{asy2}
\end{equation}

In what follows, we use the following formula:
\begin{equation}
\Psi(a,b;z)
 =
 z^{1-b}\Psi(a+1-b,2-b;z), \label{psi}
\end{equation}
(see
\cite{NIST}, eq.\ (13.2.40)).
Moreover, the following integral
representation holds for the confluent
hypergeometric function:
\begin{equation} \label{eqTricomiINtegral} \Psi
\left (a, b ; z
\right )
 =
 \frac{1}{\Gamma(a)}
 \int^{\infty}_{0}e^{-sz}s^{a-1}(1+s)^{b-a-1}ds,
  \end{equation} if
$\Re(a)>0,$
$\Re(z) \geq 0$ (see
\cite{KIL}, p.\ 30).
It is easy to check that the function
$\Psi(a,b;\cdot)$ is non-increasing
(resp.\ non-decreasing) for
$a>0$ (resp.\
$a<0$), on
$\mathbb{R}^+$, by taking into account
\begin{equation}
\frac{d}{dx}\Psi(a,b;x)
 =
 -a\Psi(a+1,b+1;x), \label{derpsi}
\end{equation}
(see
\cite{NIST}, formula (13.3.22)),
together with \eqref{eqTricomiINtegral}
(and \eqref{psi}, for
$a <0$).

\begin{theorem}\label{proc}
    For any
$\alpha \in (0,1)  \cup (1,2)$,
the Hadamard-fBm is a Gaussian process,
with zero mean,
 \begin{equation}
var(B^H_{\alpha }(t))
 =
 t, \qquad  t\geq 0,\label{var}
\end{equation}
    and
\begin{equation}
cov(B^H_{\alpha }(t),B^H_{\alpha }(s))
=C_\alpha (s \wedge t)\Psi
\Big (\frac{1-\alpha}{2}, 1-\alpha ;
 \log
\Big (\frac{s \vee t}
 { s \wedge t}
\Big )
\Big ), \label{cov}
\end{equation}
$ s,t \in \mathbb{R}_+, $
 $ s \neq t, $
where
$C_\alpha
 =
 2^{1-\alpha}\sqrt{\pi}/\Gamma(\alpha/2)$.
Moreover, its characteristic function reads,
 for
$0<t_1<...<t_n$,
$n \in \mathbb{N}$ and
$k_j \in \mathbb{R},$
$j=1,...,n$,
\begin{equation}
    \mathbb{E}e^{i\sum_{j=1}^{n}k_j
    B^H_{\alpha }(t_j)}=
    \exp
\Big \{-\frac{1}{2}
    \sum_{j,l=1}^{n}k_j k_l
     \sigma^\alpha_{j,l}
\Big  \}, \label{cf4}
\end{equation}
where
\begin{equation}
\sigma^\alpha_{j,l}
:=\left\{
\begin{array}{l}
t_j , \qquad j=l \\
C_\alpha (t_j \wedge t_l)\Psi

\left (\frac{1-\alpha}{2}, 1-\alpha ;
 \log
\left (\frac{t_j \vee t_l}
 { t_j \wedge t_l}
\right )
\right ) , \qquad j \neq l.
 \label{sigma}
\end{array}
\right.
\end{equation}
\end{theorem}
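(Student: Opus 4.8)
The plan is to exploit that $B^H_\alpha(t)=X(g_t)$, where $g_t:={}^{H}\mathcal{M}_-^{\alpha/2}1_{[0,t)}$, which by Lemma~\ref{lem2.1} belongs to $L^2(\mathbb{R}_+)$, so that every claim reduces to computing $L^2$-inner products of the deterministic functions $g_t$. First I would dispatch the Gaussian and zero-mean assertions: by \eqref{cf2} each $B^H_\alpha(t)=X(g_t)$ is a centered Gaussian variable, and any finite linear combination $\sum_j k_j B^H_\alpha(t_j)=X\!\left(\sum_j k_j g_{t_j}\right)$ is again of the form $X(\phi)$ with $\phi\in L^2(\mathbb{R})$, hence centered Gaussian; thus the finite-dimensional distributions are Gaussian with zero mean.

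For the variance \eqref{var}, I would apply \eqref{va} to get $var(B^H_\alpha(t))=\|g_t\|^2=K_\alpha^2\int_0^\infty\bigl({}^{H}\mathcal{D}_-^{(1-\alpha)/2}1_{[0,t)}\bigr)^2(x)\,dx$ when $\alpha\in(0,1)$. By \eqref{der2} this integral equals $t\,\Gamma(\alpha)/\Gamma((\alpha+1)/2)^2$, so inserting $K_\alpha^2=\Gamma((\alpha+1)/2)^2/\Gamma(\alpha)$ yields exactly $t$; the case $\alpha\in(1,2)$ is identical using \eqref{int2}.

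The heart of the argument is the covariance \eqref{cov}. By \eqref{autocov}, $cov(B^H_\alpha(t),B^H_\alpha(s))=\langle g_t,g_s\rangle$. Taking $s<t$ without loss of generality and using the explicit expression \eqref{had4} (or \eqref{had6}), both $g_t$ and $g_s$ are supported on $(0,t)$ and their product lives on $(0,s)$, whence
\[
\langle g_t,g_s\rangle=\frac{K_\alpha^2}{\Gamma((\alpha+1)/2)^2}\int_0^s\left(\log\frac{t}{x}\right)^{(\alpha-1)/2}\left(\log\frac{s}{x}\right)^{(\alpha-1)/2}dx.
\]
I would then substitute $v=\log(s/x)$ and set $\tau=\log(t/s)$, turning the integral into $s\int_0^\infty(\tau+v)^{(\alpha-1)/2}v^{(\alpha-1)/2}e^{-v}\,dv$; a further rescaling $v=\tau w$ brings it to $s\,\tau^{\alpha}\int_0^\infty w^{(\alpha-1)/2}(1+w)^{(\alpha-1)/2}e^{-\tau w}\,dw$, which matches \eqref{eqTricomiINtegral} with $a=(\alpha+1)/2$ and $b=\alpha+1$, giving $s\,\tau^{\alpha}\Gamma((\alpha+1)/2)\,\Psi\bigl((\alpha+1)/2,\alpha+1;\tau\bigr)$. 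Applying the transformation \eqref{psi} rewrites $\Psi((\alpha+1)/2,\alpha+1;\tau)=\tau^{-\alpha}\Psi((1-\alpha)/2,1-\alpha;\tau)$, cancelling the factor $\tau^{\alpha}$; collecting the gamma constants and simplifying $\Gamma((\alpha+1)/2)/\Gamma(\alpha)$ through the Legendre duplication formula produces precisely $C_\alpha=2^{1-\alpha}\sqrt{\pi}/\Gamma(\alpha/2)$, which is \eqref{cov}.

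Finally, the characteristic function \eqref{cf4} follows from the Gaussian structure: with $\phi=\sum_j k_j g_{t_j}$, the extension of \eqref{cf2} to $L^2$ gives $\mathbb{E}\,e^{i\sum_j k_j B^H_\alpha(t_j)}=e^{-\frac12\|\phi\|^2}$, and expanding $\|\phi\|^2=\sum_{j,l}k_j k_l\langle g_{t_j},g_{t_l}\rangle$ reproduces the quadratic form with entries $\sigma^\alpha_{j,l}$ as in \eqref{sigma}. The main obstacle is the covariance step: chaining the two substitutions, the integral representation \eqref{eqTricomiINtegral}, and the Kummer-type transformation \eqref{psi} so that the parameters and argument of $\Psi$ emerge exactly as stated, and then matching the leading constant via the duplication formula. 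The Gaussianity, variance, and characteristic-function parts are then routine bookkeeping built on \eqref{va}, \eqref{autocov}, and \eqref{cf2}.
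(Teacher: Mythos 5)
Your proposal is correct and follows essentially the same route as the paper: Gaussianity and zero mean from linearity and the moment identities, variance from the $L^2$-norms computed in Lemma~\ref{lem2.1}, the covariance via the substitutions $v=\log(s/x)$, $v=\tau w$ leading to the integral representation \eqref{eqTricomiINtegral} with $a=(\alpha+1)/2$, $b=\alpha+1$, followed by the transformation \eqref{psi} and the Legendre duplication formula, and the characteristic function by expanding $\|\sum_j k_j g_{t_j}\|^2$. All parameter bookkeeping in the $\Psi$-function step checks out against the paper's computation.
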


\begin{proof}[\bf Proof]
Gaussianity  follows by the linearity of Def.\
\ref{def3.1} and
$\mathbb{E}(B^H_{\alpha }(t))
 =
 0$ by taking into account
equation (\ref{meanodd}). The variance can be obtained by
considering (\ref{meanodd}) together with equations
(\ref{der2}) and (\ref{int2}), respectively. For the
autocovariance and for
$\alpha \in (0,1)$, we consider Lemma \ref{lem2.1} and the
following
$L^{2}$-inner product
\begin{align}
& \big
 \langle ^{H}\mathcal{D}_{-}^{ { \frac{1-\alpha}{2} }
}1_{[0,s)},^{H}\mathcal{D}
_{-}^{ { \frac{1-\alpha}{2} }
}1_{[0,t)}\big \rangle \notag \\
&= \int_{\mathbb{R}_+}
\Big ( ^{H}
\mathcal{D}_{-}^{ { \frac{1-\alpha}{2} }  }1_{[0,s)}
\Big )
(x)
\Big (
^{H}\mathcal{D}
_{-}^{ { \frac{1-\alpha}{2} }  }1_{[0,t)}
\Big ) (x)dx \label{inn} \\
&=\frac{1}{\Gamma^2 ((1+\alpha)/2 )}\int_{0}^{\infty}
\Big ( \log \frac{s}{x}
\Big )_+
^{ { \frac{\alpha-1 }{2} }  }
\Big ( \log \frac{t}{x}
\Big )_+
^{ { \frac{\alpha-1 }{2} }  }dx \notag \\
&\overset{\text{for
$s<t$}}{=}\frac{1}{\Gamma^2 ((1+\alpha)/2 )}\int_{0}^{s}
\Big ( \log \frac{s}{x}
\Big )
^{ { \frac{\alpha-1 }{2} }  }
\Big ( \log \frac{t}{x}
\Big )
^{ { \frac{\alpha-1 }{2} }  }dx  \notag\\
&=\frac{s}{\Gamma^2 ((1+\alpha)/2 )}\int_{0}^{\infty}
\Big [ \log
\Big (\frac{t}{s}
\Big )+w\Big ]
^{ { \frac{\alpha-1 }{2} }  }w^{ { \frac{\alpha-1 }{2} } }e^{-w} dw \notag \\
&=\frac{s}{\Gamma^2 ((1+\alpha)/2 )}\Big[ \log
\frac{t}{s}\Big]^{\alpha
}\int_{0}^{\infty}
(1+y)^{ { \frac{\alpha-1 }{2} } }y^{ { \frac{\alpha-1 }{2} } }e^{-y
\log(t/s)} dy \notag \\
&=\frac{s}{\Gamma ((1+\alpha)/2 )}\Big[ \log
\frac{t}{s}\Big]^{\alpha
}\Psi
\Big (\frac{\alpha+1}{2}, \alpha+1 ;
\log
\Big (\frac{t}{ s}
\Big )
\Big ) , \notag
\end{align}
by taking into account \eqref{eqTricomiINtegral}.
We now apply formula \eqref{psi}
for
$a=(\alpha+1)/2$,
$b=\alpha+1$ and
$z=\log(t/s)$. In order to get (\ref{cov}), in the case
$s<t$, we must consider the constant
$K_\alpha$ given in (\ref{ma}) together with the
duplication formula of the gamma function.  The cases
$s \geq t$ and
$\alpha \in (1,2)$ follow analogously.

By considering \eqref{cf2} together with \eqref{var} and
\eqref{cov}, we get
\begin{equation}
    \mathbb{E}e^{i\sum_{j
 =
 1}^{n}k_j B^H_{\alpha }(t_j)}
 =
    \exp \Big\{-\frac{1}{2}\Big\Vert \sum_{j=1}^{n}k_j
\mathcal{M}^{\alpha/2}_{-}1_{[0,t_j)}\Big\Vert^2\Big\}.
 \label{cf5}
\end{equation}
Formula \eqref{cf4} with  \eqref{sigma} follows by taking
into account that,
for any
$\alpha \in (0,1) \cup (1,2)$,
    \begin{equation}
    \lim_{x \to 1^+} C_\alpha\Psi
\Big (\frac{1-\alpha}{2},
1-\alpha ; \log x
\Big )=1, \label{lim}
    \end{equation}
    recalling \eqref{asy1}, for
$\alpha \in (0,1)$, and \eqref{asy0}, for
$\alpha \in (1,2),
$
together with the duplication formula of the gamma
function.
\end{proof}

\begin{coro} \label{corostat}
    The H-fBm is self-similar with index
$1/2$ and has non-stationary increments, with
characteristic function
    \begin{align}
    &
        \mathbb{E}e^{ik
\left (B^H_{\alpha
}(t)-B^H_{\alpha
}(s)
\right )} \notag
\\
&
= \exp
\Big \{-\frac{k^2}{2}\Big [t+s
-2C_\alpha   (t \wedge
s)\Psi
\Big  (\frac{1-\alpha}{2},
1-\alpha ; \log
\Big  (\frac{t \vee s}{ t \wedge
s}
\Big  )
\Big  )  \Big ]\Big \}, \label{cf6}
    \end{align}
    for
$k \in \mathbb{R}$ and
$s,t \geq 0$. The increments' covariance on
non-overlapping
intervals is negative $($resp. positive$),$ for
$\alpha \in (0,1)$ $($resp.\
$\alpha \in (1,2) ).$
\end{coro}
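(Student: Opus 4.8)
The plan is to use that, by Theorem \ref{proc}, the H-fBm is a centered Gaussian process, so that every assertion reduces to algebraic manipulations of the variance \eqref{var} and covariance \eqref{cov}. I write $R(s,t):=cov(B^H_\alpha(s),B^H_\alpha(t))$ throughout.

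\textbf{Self-similarity.} First I would note that for every $c>0$ both $\{B^H_\alpha(ct)\}_{t\ge0}$ and $\{c^{1/2}B^H_\alpha(t)\}_{t\ge0}$ are centered Gaussian (the former as a subfamily of jointly Gaussian variables, the latter as a scalar multiple), so it suffices to match covariances. The key observation is that in \eqref{cov} the argument $\log((s\vee t)/(s\wedge t))$ of $\Psi$ is invariant under the joint rescaling $(s,t)\mapsto(cs,ct)$, while the prefactor $C_\alpha(s\wedge t)$ is homogeneous of degree one; hence $R(ct,cs)=c\,R(t,s)=cov(c^{1/2}B^H_\alpha(t),c^{1/2}B^H_\alpha(s))$, giving self-similarity with index $1/2$.

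\textbf{Increment characteristic function and non-stationarity.} I would obtain \eqref{cf6} by specializing the joint characteristic function \eqref{cf4} to $n=2$, $t_1=s$, $t_2=t$, $k_1=-k$, $k_2=k$ (equivalently, by noting $B^H_\alpha(t)-B^H_\alpha(s)$ is centered Gaussian with variance $t+s-2R(s,t)$ read off from \eqref{var}--\eqref{cov}). For non-stationarity I would argue by contradiction through the variogram $V(s,t):=t+s-2R(s,t)$: if $V$ depended only on $t-s$, then self-similarity forces $V(cs,ct)=cV(s,t)$, hence $V$ would be linear in the lag, so $R=\tfrac12(s+t-V)$ would split as a function of $s$ plus a function of $t$ and satisfy $\partial_s\partial_t R\equiv 0$; this is contradicted by the computation below, where $\partial_s\partial_t R\neq 0$.

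\textbf{Sign of the increments' covariance.} For non-overlapping intervals $t_1<t_2\le t_3<t_4$ the increments are $B^H_\alpha(t_2)-B^H_\alpha(t_1)$ and $B^H_\alpha(t_4)-B^H_\alpha(t_3)$, and their covariance is
\[
D = R(t_2,t_4)-R(t_2,t_3)-R(t_1,t_4)+R(t_1,t_3)=\int_{t_1}^{t_2}\!\!\int_{t_3}^{t_4}\partial_s\partial_t R(s,t)\,dt\,ds ,
\]
where the double-integral representation is licit because on the rectangle $s\le t_2<t_3\le t$ keeps the $\Psi$-argument bounded away from $0$, so that $R(s,t)=C_\alpha\, s\,\Psi(\tfrac{1-\alpha}{2},1-\alpha;\log(t/s))$ is smooth there. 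Writing $a=\tfrac{1-\alpha}{2}$, $b=1-\alpha$, $u=\log(t/s)$ and applying \eqref{derpsi} twice I would find
\[
\partial_s\partial_t R(s,t)=\frac{C_\alpha}{t}\Big[-a\,\Psi(a+1,b+1;u)-a(a+1)\,\Psi(a+2,b+2;u)\Big]=-\frac{a\,C_\alpha}{t}\Big[\Psi(a+1,b+1;u)+(a+1)\,\Psi(a+2,b+2;u)\Big].
\]
Since $a+1,a+2>0$, the integral representation \eqref{eqTricomiINtegral} shows both confluent hypergeometric values are positive for $u>0$, so the bracket is positive; as $C_\alpha/t>0$, the sign of $\partial_s\partial_t R$ equals that of $-a=-(1-\alpha)/2$. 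Thus $\partial_s\partial_t R<0$ for $\alpha\in(0,1)$ and $>0$ for $\alpha\in(1,2)$, and integrating over the rectangle gives $D<0$, respectively $D>0$.

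\textbf{Main obstacle.} The delicate step is this last sign analysis: correctly computing the mixed derivative, reducing it via \eqref{derpsi} to a combination of \emph{positively}-valued Tricomi functions, and tracking the sign of $a=(1-\alpha)/2$ across the two ranges of $\alpha$. One must also justify the double-integral representation, i.e.\ the smoothness of $R$ away from the diagonal $s=t$ (where the $u^{1-b}$ term in the expansions \eqref{asy1}--\eqref{asy2} destroys smoothness), and handle the adjacent case $t_2=t_3$ by a continuity/monotone limiting argument.
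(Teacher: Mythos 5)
Your proof is correct, and its first two parts (self-similarity via the degree-one homogeneity of $C_\alpha(s\wedge t)\Psi(\cdot)$ under joint rescaling, and \eqref{cf6} as the $n=2$ specialization of \eqref{cf4}) coincide with the paper's argument. Where you genuinely diverge is the sign of the increments' covariance. The paper expands $C(u,v;s,t)=R(t,v)-R(s,v)-R(t,u)+R(s,u)$ into two bracketed differences of Tricomi functions and invokes the monotonicity of $\Psi(a,b;\cdot)$; as printed, the two brackets carry coefficients $v$ and $+u$ rather than $v$ and $-u$, and with the correct signs the two terms compete, so monotonicity of $\Psi$ alone does not immediately close that argument. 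Your route --- computing $\partial_s\partial_t R=-\tfrac{a\,C_\alpha}{t}\left[\Psi(a+1,b+1;u)+(a+1)\Psi(a+2,b+2;u)\right]$ via \eqref{derpsi}, reading off its sign from the positivity guaranteed by \eqref{eqTricomiINtegral} (both first parameters $a+1$, $a+2$ are positive for $\alpha\in(0,2)$), and integrating over the rectangle --- bypasses this entirely and gives the strict sign in one stroke; the price is justifying the double-integral identity near the diagonal, which you correctly flag and which is settled by the continuity of $R$ up to $s=t$ (cf.\ \eqref{lim}) together with a monotone limit $t_3\downarrow t_2$. Your contradiction argument for non-stationarity (scaling would force a covariance separable in $s$ and $t$, contradicting $\partial_s\partial_t R\neq0$) is likewise more explicit than the paper's ``easily follows''. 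In short: same skeleton for the first two claims, and a different, derivative-based, arguably more robust argument for the sign claim.
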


\begin{proof}[\bf Proof]
By considering \eqref{cf4}, we can write, for any
$a \in \mathbb{R}$, that
   \begin{align*}
 &  \mathbb{E}e^{i\sum_{j=1}^{n}k_j B^H_{\alpha }(at_j)}
\notag\\
    &=  \exp\!
\Big \{ \! -\frac{a}{2}\Big [\sum_{j=1}^{n}k^2_j
t_j +C_\alpha \sum_{j \neq l} k_j k_l (t_j \wedge
t_l)\Psi
\Big  (\frac{1-\alpha}{2},
1-\alpha ;
\log
\Big  (\frac{t_j \vee t_l}{ t_j \wedge t_l}
\Big  )
\Big  )  \Big ]\Big \} \notag \\
&= \mathbb{E}e^{i{a^{1/2}\sum_{j=1}^{n}k_j
B^H_{\alpha }(t_j)}} \notag
    \end{align*}
    so that
$$
\left\{B^H_{\alpha }(at) \right\}_{t \geq 0}
\overset{f.d.d.}{=} \left\{a^{1/2} B^H_{\alpha }(t)
\right\}_{t \geq 0} ,
$$
 where
$ \overset{f.d.d.}{=}$ denotes equality of the
finite-dimensional distributions. Formula \eqref{cf6} is
obtained by \eqref{cf4}, for
$k_1=k$ and
$k_2=-k$, and the non-stationarity of the increments
easily follows.

    Let
$0<u<v<s<t$ and let
$$
C(u,v;s,t)
:=\mathbb{E}\left[(B^H_{\alpha
}(t)-B^H_{\alpha
}(s))(B^H_{\alpha }(v)-B^H_{\alpha }(u))
\right] ,
$$
 then it is easy to check that
   \begin{align*}
   C(u,v;s,t)
   & = vC_\alpha
\Big [
\Psi
\Big  (\frac{1-\alpha}{2},
1-\alpha ; \log
\Big  (\frac{t}{v}
\Big  )
\Big  )-\Psi
\Big  (\frac{1-\alpha}{2},
1-\alpha ;
\log
\Big  (\frac{s}{v}
\Big  )
\Big  )\Big ]    \notag\\
   & +  u C_\alpha
\Big [ \Psi
\Big  (\frac{1-\alpha}{2},
1-\alpha
; \log
\Big  (\frac{t}{u}
\Big  )
\Big  )-\Psi
\Big  (\frac{1-\alpha}{2},
1-\alpha ;
\log
\Big  (\frac{s}{u}
\Big  )
\Big  )\Big ]. \notag
   \end{align*}
   Thus,
$C(u,v;s,t)<0$ (resp.
$<0$), for
$\alpha \in (0,1)$ (resp.\
$\alpha \in (1,2)$), by considering that
$\Psi(a,b;\cdot)$ is non-increasing (resp.\
non-decreasing)
for
$a>0$ (resp. $a<0$).
\end{proof}

\begin{remark}\rm
    We note that the variance of $B_{\alpha}^H$ is linear
in $t$, for any $\alpha \in (0,2)$, so that the effect of
the Hadamard operator vanishes on the one-dimensional
distribution, and the process displays a diffusing
behavior as the standard Brownian motion. It can be
checked that the same result would be obtained for any
fractional operator whose
Mellin transform is equal to the Mellin transform of the
indicator function multiplied by a quantity depending on
$\alpha$; thus it would also hold for
any Erd\'{e}lyi-Kober type operator (see
\cite{KIL}, sec.\ 2.6).

On the other hand the H-fBm shares with the fBm the sign
of increments' covariance over non-overlapping intervals.
 \end{remark}

   \begin{theorem}
       Let $\left\{ B (t)\right\}_{t\geq 0}$ be the
standard Brownian motion, then the following relationships
hold, for $\alpha \in (0,1)$ $($resp.\ $\alpha \in (1,2) ),$
\begin{equation}       P
\Big  (\sup_{0 \leq s \leq t} B^H
_\alpha (s)>x
\Big  ) \underset{(\text{resp.\ }
\leq)}{\geq}
 P
\Big  (\sup_{0 \leq s\leq t} B (s)>x
\Big  )
\label{geq}
\end{equation}
\begin{equation}
\mathbb{E}
\Big  (\sup_{0 \leq s \leq t} B^H _\alpha (s)
\Big  ) \underset{(\text{resp.\ } \leq)}{\geq}
\mathbb{E}
\Big  (\sup_{0
\leq s\leq t} B (s)
\Big  ), \label{geq2}
\end{equation}
for any $t \geq 0$ and $x \in \mathbb{R}$.
Moreover, $\left\{ B_\alpha ^H (t)\right\}_{t\geq 0}$ is
stochastically continuous, for any $\alpha \in (0,1) \cup
(1,2)$, and has continuous sample paths a.s., for $\alpha
\in (1,2)$.
   \end{theorem}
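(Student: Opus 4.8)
The plan is to read everything off the second-moment structure already computed in Theorem~\ref{proc} and Corollary~\ref{corostat}, and to feed it into the classical Gaussian comparison inequalities (Slepian and Sudakov--Fernique) together with the Kolmogorov--Chentsov criterion. The single computation on which all four assertions rest is the comparison of covariances with the standard Brownian motion $B$. Fix $0\le s<t$ and set $\rho:=\log(t/s)$. Since $var(B^H_\alpha(r))=r$ by \eqref{var} and $cov(B(s),B(t))=s$, formula \eqref{cov} gives
\[
cov(B^H_\alpha(s),B^H_\alpha(t))-cov(B(s),B(t))=s\Big(C_\alpha\,\Psi\big(\tfrac{1-\alpha}{2},1-\alpha;\rho\big)-1\Big).
\]
By \eqref{lim} the bracket tends to $0$ as $\rho\downarrow 0$, while $\Psi(\tfrac{1-\alpha}{2},1-\alpha;\cdot)$ is non-increasing for $\alpha\in(0,1)$ (where $a=\tfrac{1-\alpha}{2}>0$) and non-decreasing for $\alpha\in(1,2)$ ($a<0$), as noted after \eqref{derpsi}. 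Hence $C_\alpha\Psi(\tfrac{1-\alpha}{2},1-\alpha;\rho)\le 1$ for $\alpha\in(0,1)$ and $\ge 1$ for $\alpha\in(1,2)$: at equal variances, the Hadamard-fBm has pointwise \emph{smaller} (resp.\ \emph{larger}) off-diagonal covariances than $B$ for $\alpha\in(0,1)$ (resp.\ $\alpha\in(1,2)$). Equivalently, writing $\delta^2(s,t):=\mathbb{E}[(B^H_\alpha(t)-B^H_\alpha(s))^2]=(t-s)+2s\big(1-C_\alpha\Psi(\tfrac{1-\alpha}{2},1-\alpha;\rho)\big)$, one has $\delta^2(s,t)\ge|t-s|$ for $\alpha\in(0,1)$ and $\delta^2(s,t)\le|t-s|$ for $\alpha\in(1,2)$.

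For \eqref{geq}--\eqref{geq2} I would invoke Gaussian comparison. On a finite mesh $0\le r_1<\dots<r_m\le t$ the vectors $(B^H_\alpha(r_i))_i$ and $(B(r_i))_i$ are centred Gaussian with identical variances and off-diagonal covariances ordered as above, so Slepian's inequality yields $P(\max_i B^H_\alpha(r_i)>x)\ge P(\max_i B(r_i)>x)$ for $\alpha\in(0,1)$, and the reverse for $\alpha\in(1,2)$, for every $x$. Taking an increasing sequence of meshes with dense union in $[0,t]$ and passing to the limit by monotone convergence (the mesh maxima increase to $\sup_{0\le s\le t}$, using the separable, and for $\alpha\in(1,2)$ continuous, modification discussed below) gives \eqref{geq}; for $x<0$ both sides equal $1$ because the supremum dominates $B^H_\alpha(0)=0$. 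Since that supremum is non-negative, $\mathbb{E}\sup_{0\le s\le t}B^H_\alpha(s)=\int_0^\infty P(\sup_{0\le s\le t}B^H_\alpha(s)>x)\,dx$, so integrating \eqref{geq} over $x\ge 0$ yields \eqref{geq2}; alternatively \eqref{geq2} follows at once from the Sudakov--Fernique inequality applied to the increment comparison $\delta^2(s,t)\ge|t-s|$ (resp.\ $\le|t-s|$) established above.

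Stochastic continuity is then immediate from the increment law in \eqref{cf6}: $B^H_\alpha(t)-B^H_\alpha(s)$ is centred Gaussian with variance $\delta^2(s,t)$, and \eqref{lim} forces $\delta^2(s,t)\to 0$ as $s\to t$ from either side, so $B^H_\alpha(s)\to B^H_\alpha(t)$ in $L^2$, hence in probability, for all $\alpha\in(0,1)\cup(1,2)$. For a.s.\ path continuity when $\alpha\in(1,2)$ I would use the key estimate $\delta^2(s,t)\le|t-s|$ from the first paragraph (valid precisely because $C_\alpha\Psi(\tfrac{1-\alpha}{2},1-\alpha;\rho)\ge 1$ there). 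Gaussianity gives $\mathbb{E}|B^H_\alpha(t)-B^H_\alpha(s)|^4=3\,\delta^2(s,t)^2\le 3|t-s|^2$, so the Kolmogorov--Chentsov criterion (exponent $2>1$) produces a modification with a.s.\ continuous paths on every $[0,T]$, hence on $[0,\infty)$, the value at the origin being covered by the same bound since $\delta^2(0,t)=t$.

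I expect the genuinely delicate points to be bookkeeping rather than hard analysis, since the covariance formula \eqref{cov} and the monotonicity of $\Psi$ do all the heavy lifting. The two places to be careful are: matching the direction of the covariance (resp.\ increment) ordering to the direction of the conclusion in Slepian and Sudakov--Fernique; and justifying the passage from finite-mesh maxima to the continuum supremum, which requires a separable modification for $\alpha\in(0,1)$ and the continuous modification for $\alpha\in(1,2)$. It is also worth stressing why continuity is asserted only for $\alpha\in(1,2)$: there $\delta^2(s,t)\le|t-s|$, giving the Kolmogorov bound with exponent $2$, whereas for $\alpha\in(0,1)$ the reverse inequality $\delta^2(s,t)\ge|t-s|$ holds and no such estimate with exponent exceeding $1$ is available.
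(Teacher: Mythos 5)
Your proposal is correct, and for the supremum inequalities and stochastic continuity it follows essentially the same route as the paper: both arguments reduce everything to the comparison $C_\alpha\Psi\bigl(\tfrac{1-\alpha}{2},1-\alpha;\log(t/s)\bigr)\le 1$ for $\alpha\in(0,1)$ and $\ge 1$ for $\alpha\in(1,2)$ (the paper's \eqref{ub1} and \eqref{ub1bis}, obtained exactly as you do from \eqref{derpsi} and \eqref{lim}), then feed the resulting covariance ordering $cov(B^H_\alpha(t),B^H_\alpha(s))\lessgtr s\wedge t$ into Slepian for \eqref{geq} and Sudakov--Fernique for \eqref{geq2}, and obtain stochastic continuity from the vanishing of the incremental variance. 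The only genuine divergence is the a.s.\ path continuity for $\alpha\in(1,2)$. You use the clean increment bound $\delta^2(s,t)\le|t-s|$ to get $\mathbb{E}|B^H_\alpha(t)-B^H_\alpha(s)|^4=3\,\delta^2(s,t)^2\le 3|t-s|^2$ and invoke Kolmogorov--Chentsov; the paper instead first realizes the process as a random element of the c\`adl\`ag space $\mathbb{D}$ via Pang--Taqqu's moment criterion (whose verification rests on the superadditivity $\rho_H(r,s)+\rho_H(s,t)\le\rho_H(r,t)$, valid only for $\alpha>1$) and then upgrades to continuity by Hahn's theorem for stochastically continuous Gaussian processes with paths in $\mathbb{D}$. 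Your route is more elementary and self-contained, and yields as a bonus local H\"older continuity of any order below $1/4$; it is in fact close in spirit to the paper's own remark that continuity can alternatively be derived from $\rho_H(s,t)\le\mathbb{E}[B(t)-B(s)]^2$ via Marcus--Shepp. Two minor caveats: as you note, both the mesh-limit passage in Slepian and the Kolmogorov--Chentsov conclusion are statements about a (separable, resp.\ continuous) modification, which is the correct reading of the theorem in this generalized-process framework; and your closing claim that for $\alpha\in(0,1)$ ``no estimate with exponent exceeding $1$ is available'' is stronger than warranted --- by \eqref{asy1} one has $\delta^2(s,t)=O(|t-s|^\alpha)$ locally, so higher Gaussian moments would still give a Kolmogorov bound --- but this aside does not affect the proof of the statement as given.
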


\begin{proof}[\bf Proof]
       As a consequence of \eqref{derpsi},
 for any $x>1$, the function $\Psi
  (\frac{1-\alpha}{2}, 1-\alpha ;$ $ \log x
  )$ is non-increasing, for
$\alpha \in (0,1)$, and non-decreasing, for
$\alpha \in (1,2)$. Therefore,
taking into account \eqref{lim}, we get that
    \begin{equation}
    0 \leq C_\alpha\Psi
\Big (\frac{1-\alpha}{2}, 1-\alpha ; \log x
\Big )\leq  1 \leq \sqrt{x}, \qquad
\alpha \in (0,1), x>1, \label{ub1}
   \end{equation}
while it can be proved that
    \begin{equation}
   1 \leq C_\alpha\Psi
\Big (\frac{1-\alpha}{2}, 1-\alpha ; \log x
\Big )\leq  \sqrt{x},
\qquad \alpha \in (1,2), x>1 .\label{ub1bis}
   \end{equation}
   Indeed, we define
   $$
   h(x)
:=\sqrt{x}- C_\alpha\Psi
\Big (\frac{1-\alpha}{2}, 1-\alpha ; \log x
\Big )
$$
and check that $h(x)>h(1)=0$,
for any $x>1:$ $h(1)=0,$ by \eqref{lim},
and the inequality follows by taking
the first derivative
   \begin{align}
   h'(x) & = \frac{1}{2\sqrt{x}}-
   \frac{\alpha -1}{2x} C_\alpha \Psi
\Big (\frac{3-\alpha}{2}, 2-\alpha ; \log x
\Big ) \notag \\
   & =  \frac{1}{2\sqrt{x}}
\Big[1-\frac{\alpha -1}{\sqrt{x}}
   C_\alpha \Psi
\Big (\frac{3-\alpha}{2}, 2-\alpha ; \log x
\Big )\Big]>0,
   \end{align}
   for any $x>1$, since $\Psi
\left (\frac{3-\alpha}{2},
2-\alpha ; \log \cdot
\right )$ is non-increasing, for
$\alpha \in (1,2)$ (by \eqref{derpsi}),
 and
   \begin{equation}
   \lim_{x \to 1^+}(\alpha -1)C_\alpha \Psi
\Big (\frac{3-\alpha}{2}, 2-\alpha ; \log x
\Big )=\frac{(\alpha -1)\Gamma(\alpha -1)}
{\Gamma ((1+\alpha)/2)}\frac{2^{1-\alpha}
\sqrt{\pi}}{\Gamma(\alpha/2)}=1, \label{lim1}
   \end{equation}
   by considering again \eqref{asy1}
   and the duplication formula.

   Therefore, we have that, for any
$s,t \geq 0$,
   \begin{equation}
 cov(B^H_{\alpha }(t),
   B^H_{\alpha }(s)) \underset{(\text{resp.\ }
   \geq)}{\leq}  s \wedge t= cov(B(t),B(s)),
    \label{coco}
    \end{equation}
    for $\alpha \in (0,1) $ (resp.
    $\alpha \in (1,2)$), by \eqref{ub1}
    (resp. \eqref{ub1bis}).

   Formula \eqref{geq} follows from
   \eqref{coco} by considering that
$\left\{ B_\alpha ^H (t)\right\}_{t\geq 0}$
and
$\left\{ B (t)\right\}_{t\geq 0}$
are Gaussian, centered, with the same
variance and by applying the
Slepian inequality,
 on the separable space $[0,t]$
 (see, for details,
\cite{ADL}, Corollary 2.4).
Analogously, we obtain \eqref{geq2}
by the Sudakov-Fernique inequality (see
\cite{ADL}, Theorem 2.9, for details).

The stochastic continuity of
$\left\{ B_\alpha ^H (t)\right\}_{t\geq 0}$
follows by proving the continuity of
its incremental variance, since then,
for any $t \geq 0$ and $\varepsilon >0$,
\[
\lim_{h \to 0}P\left\{|B_\alpha ^H (t+h)-
B_\alpha ^H (t)|> \varepsilon \right\}
\leq
\lim_{h \to 0}
\frac{\mathbb{E}\left[B_\alpha ^H (t+h)
-B_\alpha ^H (t)\right]^2}{\varepsilon^2}= 0.
\]
By considering \eqref{inn} and denoting,
for brevity $g_t (x)
:= \log \frac{t}{x}$ and $g_t(x)_+
:=g_t(x)1_{g_t(x)\geq0}=g_t(x)1_{0<x
\leq t}$, we can write, for $0\leq s<t$,
\begin{align}
& \rho_H (s,t)
:=  \mathbb{E}
\Big [ B_\alpha ^H (t)-B_\alpha ^H (s)
\Big ]^2 \label{rho} \\
=  & \frac{1}{\Gamma(\alpha)}
\Big \{ \int_{0}^{t}  \! g_t (x)
^{\alpha-1 }dx+\int_{0}^{s}  \! g_s (x)
^{\alpha-1 } dx-2 \int_{0}^{s} \! g_t (x)
^{ { \frac{\alpha-1 }{2} }  } g_s (x)
^{ { \frac{\alpha-1 }{2} }  } dx\Big \} \notag \\
= {} &\frac{1}{\Gamma(\alpha)}
\Big \{\int_{s}^{t}g_t (x)
^{\alpha-1 }dx + \int_{0}^{s}
\Big [g_t (x)
^{ { \frac{\alpha-1 }{2} }  }-g_s (x)
^{ { \frac{\alpha-1 }{2} }  }\Big ]^2 dx\Big \} \notag \\
= {} &\frac{1}{\Gamma(\alpha)}
\Big \{\int_{s}^{t} \Big [g_t (x)
^{ { \frac{\alpha-1 }{2} }  }_+ -g_s (x)
^{ { \frac{\alpha-1 }{2} }  }_+ \Big ]^2 dx
  +    \int_{0}^{s} \Big [g_t (x)_+
^{ { \frac{\alpha-1 }{2} }  }-g_s (x)_+
^{ { \frac{\alpha-1 }{2} }  }\Big ]^2 dx\Big \}
 \notag \\
= {} &\frac{1}{\Gamma(\alpha)}
\int_{0}^{t} \Big [
\Big  ( \log \frac{t}{x}
\Big  )_+
^{ { \frac{\alpha-1 }{2} }  }-
\Big  ( \log \frac{s}{x}
\Big  )_+
^{ { \frac{\alpha-1 }{2} }  }\Big ]^2 dx . \notag
\end{align}
It is evident by \eqref{rho} that
$\lim_{h \to 0} \rho_H (t,t+h)=0$,
for any $t \geq 0$ and for
$\alpha \in (0,1) \cup (1,2)$.

In the case
$\alpha \in (1,2)$, in order to
prove the a.s.\ continuity of the
 trajectories,  we proceed as follows:
 we first apply Theorem 6.2 in
\cite{PAN},
 in order to check that the Gaussian process
 $\left\{B_\alpha^H(t)\right\}_{t\geq 0}$
 can be viewed as a random element in the space
 $\mathbb{D}$
 (of the real valued cádlág
 functions on $\mathbb{R}_+$)
 with the specified
 finite-dimensional distributions.
 This is verified since the sufficient
 condition holds: i.e.,
 for any $0 \leq r<s<t$ and $C>0$,
\begin{align*}
&\mathbb{E}\left[ |B_\alpha ^H (r)
-B_\alpha ^H (s)|^2
|B_\alpha ^H (s)-B_\alpha ^H (t)|^2\right] \\
&\leq \left\{\mathbb{E}
\left[ |B_\alpha ^H (r)-B_\alpha ^H (s)|\right]^4
\mathbb{E}\left[ |B_\alpha ^H (s)-B_\alpha ^H (t)|
\right]^4 \right\}^{1/2} \notag \\
&=  C \left\{\rho_H (r,s)^2 \rho_H (s,t)^2
\right\}^{1/2} \leq C \rho_H (r,t)^2, \notag
\end{align*}
by the H\"{o}lder inequality and the
properties of the Gaussian moments
 (for $C=3$).
The last inequality follows by proving that
$$
\rho_{H}(r,s)+\rho_{H}(s,t) \leq \rho_{H}(r,t) ,
\ \text{ for any $0 \leq r<s<t$.}
$$
 Indeed,
for any $0 \leq r<s<t$, we get
\begin{align}
& \rho_{H}(r,t)
 -\rho_{H}(r,s)-\rho_{H}(s,t) \notag \\
= {}
& \frac{1}{\Gamma(\alpha)}
\Big \{\int_{0}^{t}
\Big [g_t (x)
^{ { \frac{\alpha-1 }{2} }  }_+ -g_r (x)
^{ { \frac{\alpha-1 }{2} }  }_+ \Big ]^2 dx-\int_{0}^{s}
 \Big [g_s (x)
^{ { \frac{\alpha-1 }{2} }  }_+ -g_r (x)
^{ { \frac{\alpha-1 }{2} }  }_+ \Big ]^2 dx  \notag \\
&-   \int_{0}^{t} \Big [g_t (x)
^{ { \frac{\alpha-1 }{2} }  }_+ -g_s (x)
^{ { \frac{\alpha-1 }{2} }  }_+ \Big ]^2 dx   \Big \} \notag \\
= {} & \frac{1}{\Gamma(\alpha)}\Big \{\int_{0}^{t}
\Big [g_t (x)_+
^{ { \frac{\alpha-1 }{2} }  } -g_s (x)_+
^{ { \frac{\alpha-1 }{2} }  } +g_s (x)_+
^{ { \frac{\alpha-1 }{2} }  }  -g_r (x)_+
^{ { \frac{\alpha-1 }{2} }  } \Big ]^2 dx   \notag \\
&-  \int_{0}^{t}
\Big [g_s (x)_+
^{ { \frac{\alpha-1 }{2} }  }  -g_r (x)_+
^{ { \frac{\alpha-1 }{2} }  } \Big ]^2 dx+\int_{s}^{t}
 \Big [g_s (x)_+
^{ { \frac{\alpha-1 }{2} }  }  -g_r (x)_+
^{ { \frac{\alpha-1 }{2} }  } \Big ]^2 dx    \notag \\
&-   \int_{0}^{t} \Big [g_t (x)
^{ { \frac{\alpha-1 }{2} }  }  -g_s (x)
^{ { \frac{\alpha-1 }{2} }  } \Big ]^2 dx \Big \} \notag \\
= {} & \frac{1}{\Gamma(\alpha)}
\Big \{2\int_{0}^{t}
\Big [g_t (x)_+
^{ { \frac{\alpha-1 }{2} }  } -g_s (x)_+
^{ { \frac{\alpha-1 }{2} }  }\Big ] \Big [g_s (x)_+
^{ { \frac{\alpha-1 }{2} }  }  -g_r (x)_+
^{ { \frac{\alpha-1 }{2} }  } \Big ] dx   \notag \\
&+   \int_{s}^{t} \Big [g_s (x)
^{ { \frac{\alpha-1 }{2} }  }  -g_r (x)
^{ { \frac{\alpha-1 }{2} }  } \Big ]^2 dx \Big \}
 \geq 0,\notag
\end{align}
where the last inequality holds only for
$\alpha >1$.

 Finally, we apply Theorem 1 in
\cite{HAH},
which states that, if a real-valued
Gaussian process with sample
paths in $\mathbb{D}$ is stochastically
continuous (or, equivalently, in quadratic mean),
 then it has continuous sample
paths almost surely.
   \end{proof}

\begin{remark}\rm
    Another consequence of
    \eqref{ub1} and
    \eqref{ub1bis} is that the
    Cauchy-Schwartz inequality is
    satisfied by \eqref{var} and
\eqref{cov}.
\end{remark}

\begin{remark}\rm
 The sample paths' continuity of the H-fBm, for
$\alpha \in (1,2)$, can be alternatively
derived from the same property holding for
the Brownian motion, by applying Lemma 3.2 in
\cite{MAR} and considering that, by
\eqref{cov},
the incremental variance of
$\left\{ B_\alpha ^H (t)\right\}_{t\geq 0}$
can be bounded by that of
$\left\{ B (t)\right\}_{t\geq 0}$, i.e.,
$$
 \rho_H(s,t) \leq
\mathbb{E}\left[ B(t)-B (s)\right]^2 ,
\ \text{
for any $s,t \geq 0.$}
$$
\end{remark}

In order to analyse the long-time
properties of the Hadamard-fBm,
let us define the discrete-time
increment process
$X_{\alpha}^H
:=\left\{ X^H_{\alpha
}(n)\right\} _{n\geq 1}$,
where
$X^H_{\alpha}(n)
:=B_{\alpha}^H(n)-B_{\alpha}^H(n-1)$,
for
$n \in \mathbb{N}.$ Since, by
Corollary \ref{corostat}, the increments of
$B_{\alpha}^H$ are non-stationary,
we apply the criterion for long/short
range dependence given in
\cite{HEY} for this kind of processes,
that is we study the asymptotic behavior of
\begin{equation}\label{delta}
\Delta_t ^{(m)}
:=\frac{var\left[ \sum_{j=tm-m+1}^{tm}
X_\alpha ^H (j)\right]}{\sum_{j=tm-m+1}^{tm}var
\left[X_\alpha ^H (j)\right]} \qquad t,m
 \in \mathbb{N}.
\end{equation}
In particular, analogously to the case of
stationary increments, we will say that the process

\begin{itemize}
\item  is antipersistent if $\Delta_t ^{(m)} \to 0$
\item has short memory if $\Delta_t ^{(m)} \to K>0$
\item has long memory if $\Delta_t ^{(m)} \to \infty$

\end{itemize}
as $m \to\infty$.

\begin{theorem}\label{cor2.1}
 The discrete-time increment process
 $\left\{X^H_{\alpha}(n)\right\}_{n\geq 1} $
 is anti-persistent for
 $\alpha \in (0,1)$, while it is
 long-range dependent for
 $\alpha \in (1,2)$.
\end{theorem}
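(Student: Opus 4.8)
The plan is to evaluate the numerator and the denominator of $\Delta_t^{(m)}$ separately, to show that each behaves asymptotically like a pure power of $m$, and then to read off the sign of the resulting exponent. For the numerator I would first telescope the block sum, $\sum_{j=tm-m+1}^{tm}X_\alpha^H(j)=B_\alpha^H(tm)-B_\alpha^H((t-1)m)$, and then exploit the self-similarity established in Corollary \ref{corostat}: choosing $a=m$ in the finite-dimensional identity $\{B_\alpha^H(at)\}\overset{f.d.d.}{=}\{a^{1/2}B_\alpha^H(t)\}$ gives
\[
\operatorname{var}\!\Bigl[\sum_{j=tm-m+1}^{tm}X_\alpha^H(j)\Bigr]=\operatorname{var}\bigl(B_\alpha^H(tm)-B_\alpha^H((t-1)m)\bigr)=m\,\rho_H(t-1,t),
\]
so that the numerator is exactly linear in $m$, with coefficient $\rho_H(t-1,t)>0$ independent of $m$.

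The heart of the argument is the large-$j$ behaviour of the single-increment variance $\operatorname{var}[X_\alpha^H(j)]=\rho_H(j-1,j)$. By \eqref{var} and \eqref{cov},
\[
\rho_H(j-1,j)=(2j-1)-2C_\alpha(j-1)\,\Psi\!\Bigl(\tfrac{1-\alpha}{2},1-\alpha;z_j\Bigr),\qquad z_j:=\log\tfrac{j}{j-1}\to 0 .
\]
I would expand $\Psi$ near the origin: writing out its defining series (or using \eqref{psi}) and recalling \eqref{lim}, one obtains
\[
C_\alpha\,\Psi\!\Bigl(\tfrac{1-\alpha}{2},1-\alpha;z\Bigr)=1+\tfrac{z}{2}+d_\alpha z^{\alpha}+o(z^{\alpha}),\qquad d_\alpha:=C_\alpha\frac{\Gamma(-\alpha)}{\Gamma(\tfrac{1-\alpha}{2})}<0,
\]
the sign of $d_\alpha$ being negative throughout $(0,1)\cup(1,2)$. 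Substituting this and using the elementary expansion $(j-1)z_j=1-\tfrac{1}{2(j-1)}+O(j^{-2})$, the integer-power terms $2j-1$, $2(j-1)$ and the linear contribution $-(j-1)z_j$ cancel up to $O(j^{-1})$, while the non-integer power survives and dominates, yielding
\[
\rho_H(j-1,j)\sim -2d_\alpha\,j^{1-\alpha}=K\,j^{1-\alpha},\qquad K:=-2d_\alpha>0,
\]
uniformly for $\alpha\in(0,1)\cup(1,2)$. (Alternatively, one can bypass the hypergeometric asymptotics and estimate the integral \eqref{rho} for $\rho_H(j-1,j)$ directly, via $x=(j-1)e^{-u}$ followed by the rescaling $u=z_j v$, which isolates the same rate $j^{1-\alpha}$.)

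Given this, the denominator is a sum of $m$ terms located at scale $tm$, so comparison with $\int_{(t-1)m}^{tm}x^{1-\alpha}\,dx$ (justified by monotonicity of $x\mapsto x^{1-\alpha}$) gives
\[
\sum_{j=tm-m+1}^{tm}\operatorname{var}[X_\alpha^H(j)]\sim K\,\frac{m^{2-\alpha}}{2-\alpha}\bigl[t^{2-\alpha}-(t-1)^{2-\alpha}\bigr],
\]
which grows like $m^{2-\alpha}$ for every admissible $\alpha$. Dividing the linear numerator by this quantity gives $\Delta_t^{(m)}\sim c_{\alpha,t}\,m^{\alpha-1}$ with $c_{\alpha,t}>0$; hence $\Delta_t^{(m)}\to 0$ when $\alpha\in(0,1)$ (anti-persistence) and $\Delta_t^{(m)}\to\infty$ when $\alpha\in(1,2)$ (long-range dependence), which is exactly the assertion.

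I expect the delicate step to be the asymptotics of $\rho_H(j-1,j)$: since $(j-1)z_j\to 1$, the dominant integer-power terms almost entirely cancel, so one must carry the expansion of $\Psi$ to the non-integer order $z^{\alpha}$ and verify that this term, rather than the $O(j^{-1})$ remainder left by the cancellation, sets the rate $j^{1-\alpha}$. The integral comparison for the denominator is comparatively routine; the only mild care is the case $t=1$, where the window $[1,m]$ contains small indices, but there the finitely many non-asymptotic terms are negligible because $\sum_{j\le m}j^{1-\alpha}$ is dominated by its large-$j$ part for every $\alpha<2$.
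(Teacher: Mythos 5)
Your proposal is correct, and it reaches the conclusion by a genuinely different route from the paper. The numerator is treated equivalently (the paper computes $var[B^H_\alpha(tm)-B^H_\alpha(tm-m)]=mC_{\alpha,t}$ directly from \eqref{var}--\eqref{cov} rather than via self-similarity, but the outcome, $mC_{\alpha,t}$ with $C_{\alpha,t}=\rho_H(t-1,t)\geq(\sqrt{t}-\sqrt{t-1})^2>0$ by \eqref{ub1}--\eqref{ub1bis}, is the same). The real divergence is in the denominator: the paper never identifies its growth rate. For $\alpha\in(0,1)$ it bounds $S_{t,m}$ from above using the monotonicity of $\Psi(a,b;\cdot)$ from \eqref{derpsi}, and for $\alpha\in(1,2)$ it bounds the whole sum by $m\,var[X^H_\alpha(tm-m+1)]$ after showing $j\mapsto var[X^H_\alpha(j)]$ is non-increasing; in both cases it then shows that $1/\Delta_t^{(m)}$ tends to $\infty$, resp.\ $0$, by l'H\^opital's rule together with \eqref{lim} and \eqref{lim1}. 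You instead extract the exact order $var[X^H_\alpha(j)]\sim Kj^{1-\alpha}$ from the two-term local expansion of the Tricomi function, and your bookkeeping checks out: the coefficient of $z$ in $C_\alpha\Psi(\tfrac{1-\alpha}{2},1-\alpha;z)$ is exactly $C_\alpha\tfrac{\Gamma(1-b)}{\Gamma(1+a-b)}\cdot\tfrac{a}{b}=\tfrac12$, which is precisely what cancels the $O(1)$ terms; $d_\alpha=C_\alpha\Gamma(-\alpha)/\Gamma(\tfrac{1-\alpha}{2})$ is negative on both parameter ranges (the two Gamma factors have the same sign for $\alpha\in(0,1)$ and opposite signs for $\alpha\in(1,2)$, but the quotient is negative in both cases); and $j^{1-\alpha}$ dominates the $O(j^{-1})$ remainder exactly because $\alpha<2$. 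Summation then gives a denominator of order $m^{2-\alpha}$ and $\Delta_t^{(m)}\asymp m^{\alpha-1}$, which implies the theorem. What each approach buys: the paper's one-sided bounds avoid the delicate $O(m^2)$-versus-$O(m^2)$ cancellation entirely, needing only the leading constant of $\Psi$ at the origin plus monotonicity; yours is sharper, producing the rate $m^{\alpha-1}$ — the same power of $m$ as $\delta_t^{(m)}$ for the fBm — and thereby refines the comparison made in the remark following the theorem (for $\alpha\in(1,2)$ your asymptotics indicate that $\delta_t^{(m)}/\Delta_t^{(m)}$ converges to a finite positive constant, whereas the remark's lower bound, which discards the $O(z)$ term of $\Psi$, is not conclusive as stated). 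If you formalize your argument, the two points to write out carefully are the uniform control of the error in $\rho_H(j-1,j)=Kj^{1-\alpha}+O(j^{-1})+o(j^{1-\alpha})$ so that it can be summed over the window, and the strict positivity of $K=-2d_\alpha$, both of which you have correctly identified as the crux.
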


\begin{proof}[\bf Proof]
It is easy to see that, for any
$\alpha \in (0,1) \cup (1,2)$,
the numerator of $\Delta_t ^{(m)}$
reduces to
\begin{align}\label{lrd}
& var\Big [ \sum_{j=tm-m+1}^{tm}
X_\alpha ^H (j)\Big ]
 = var\left[ B_\alpha ^H (tm)-
B_\alpha ^H (tm-m)\right]  \\
&= m(2t-1)-2m(t-1)C_\alpha \Psi
\Big  (\frac{1-\alpha}{2}, 1-\alpha ; \log
\Big  (\frac{t}{ t-1 }
\Big  )
\Big  )= m C_{\alpha,t} , \notag
\end{align}
where $C_{\alpha,t}$ is a positive
constant not depending on $m$. Indeed,
 by \eqref{ub1} and \eqref{ub1bis},
  we have that
\begin{align*}
C_{\alpha,t} & =
2t-1-2(t-1)C_\alpha \Psi
\Big (\frac{1-\alpha}{2}, 1-\alpha ; \log
\Big (\frac{t}{ t-1 }
\Big )
\Big ) \\
&
 \geq 2t-1-2\sqrt{t(t-1)} \geq 0, \ \text{for $t >1$.}
\end{align*}

As far as the denominator is concerned,
in the case $\alpha \in (0,1)$,
we have that
\begin{align}
&\sum_{j=tm-m+1}^{tm}var
\left[X_\alpha ^H (j)\right] \label{lrd2}\\
&=\sum_{j=tm-m+1}^{tm} (2j-1)-2
C_{\alpha}\sum_{j=tm-m+1}^{tm}(j-1)\Psi
\Big (\frac{1-\alpha}{2}, 1-\alpha ; \log
\Big (\frac{j}{ j-1 }
\Big )
\Big )   \notag \\
&=:m^2 (2t-1)-2C_\alpha S_{t,m}. \notag
\end{align}
Since the function
$\Psi(a,b;\cdot)$ is non-increasing   for
$a>0$, and thus
$\Psi(a,b;$ $\log(x/(x-1)))$ is
non-decreasing, for $x>1$, the term
$S_{t,m}$ can be bounded as follows:
\begin{align}
S_{t,m}  & = \sum_{j=tm-m+1}^{tm}(j-1)\Psi
\Big (\frac{1-\alpha}{2}, 1-\alpha  ;
 \log
\Big (\frac{j}{ j-1 }
\Big )
\Big ) \label{lrd3} \\
&\leq   \frac{1}{2}\left[m^2
  (2t-1
  )-m\right] \Psi
\Big (\frac{1-\alpha}{2}, 1-\alpha  ; \log
\Big (\frac{tm}{ tm-1 }
\Big )
\Big ).\notag
\end{align}
Therefore, we have that
\[
\sum_{j=tm-m+1}^{tm}var \left[X_\alpha ^H (j)\right]
 \geq
m^2 (2t-1)-\left[m^2
\left (2t-1
\right )-m\right] K^{(m)}_{\alpha ,t}
\]
where
$$
K^{(m)}_{\alpha,t} =C_\alpha \Psi
\Big (\frac{1-\alpha}{2}, 1-\alpha ; \log
  (  {\frac{tm}{tm-1}}
  )
\Big ) .
$$
By taking into account
\eqref{lrd} and  applying the
l'H\^{o}pital rule together with
\eqref{derpsi}, it can be proved that,
for any $t>1$,
\begin{align*}
& \liminf_{m \to \infty}\frac{1}{\Delta_t^{(m)}}
\geq \frac{1}{C_{\alpha, t}}
\lim_{m \to \infty} \left[m (2t-1)
-\left [m
\left (2t-1
\right )-1\right] K^{(m)}_{\alpha ,t}\right]
 \notag \\
& = \frac{1}{C_{\alpha, t}}\lim_{m \to \infty}
 \frac{2t-1-\left[2t-1-\frac{1}{m}\right]
 C_\alpha \Psi
\big (\frac{1-\alpha}{2}, 1-\alpha ; \log
\big (  {\frac{tm}{tm-1}}
\big )
\big )}{1/m}
 \notag \\
& =
    \frac{1}{C_{\alpha, t}} \lim_{m \to \infty}
\frac{-\frac{1}{m^2}K^{(m)}_{\alpha ,t} \! + \!
 [2t-1-\frac{1}{m} ]\frac{\alpha -1}
{2m(tm-1)}C_\alpha \Psi
\big (\frac{3-\alpha}{2}, 2-\alpha ; \log
\big (  {\frac{tm}{tm-1}}
\big )
\big )}{-1/m^2}
 \notag \\
& = \frac{1}{C_{\alpha, t}} \lim_{m \to \infty}
K^{(m)}_{\alpha}  \\
&
\quad
+
\Big [2t-1-\frac{1}{m}
\Big ]\frac{(1-\alpha)m }{2(tm-1)}C_\alpha \Psi
\Big  (\frac{3-\alpha}{2}, 2-\alpha ; \log
\Big  (  {\frac{tm}{tm-1}}
\Big  )
\Big  )=\infty, \notag
\end{align*}
for $\alpha \in (0,1)$, where, in the last step,
 we have applied
\eqref{lim} and \eqref{asy2}.
Therefore,
$\lim_{m \to \infty} \Delta_t ^{(m)} =0$
and the anti-persistence follows.

For $\alpha \in (1,2)$, the function
$var \left[X_\alpha ^H (j)\right] $
(see \eqref{lrd2}) is non-increasing, for
$j>1$, as can be ascertained by
differentiating
\[
f(x)=var\left[X_\alpha ^H (x)\right]
 =(2x-1)-2(x-1) C_{\alpha}\Psi
\Big (\frac{1-\alpha}{2}, 1-\alpha ; \log
\Big (\frac{x}{ x-1 }
\Big )
\Big ),
\]
for $x>1.$ We have that
\begin{align*}
f'(x)=2 & -2C_{\alpha}\Psi
\Big (\frac{1-\alpha}{2}, 1-\alpha ; \log
\Big (\frac{x}{ x-1 }
\Big )
\Big ) \\
&
+\frac{\alpha -1}{x}C_{\alpha}\Psi
\Big (\frac{3-\alpha}{2}, 2-\alpha ; \log
\Big (\frac{x}{ x-1 }
\Big )
\Big ),
\end{align*}
which is negative, since
$f''(x)>0$ and
$\lim_{x \to \infty}f'(x)=0$
(taking into account \eqref{lim}
and \eqref{lim1}).
 Therefore, we have that
\begin{align*}
\sum_{j=tm-m+1}^{tm}var
\left[X_\alpha ^H (j)\right]
&
 \leq m \; var
\left[X_\alpha ^H (tm-m+1)\right] \\
&
= 2(t-1)m^2+m -2(t-1)m^2 K'^{(m)}_{\alpha ,t},
\end{align*}
where
$$
K'^{(m)}_{\alpha,t} =C_\alpha \Psi
\Big (\frac{1-\alpha}{2}, 1-\alpha ; \log
\Big (tm-m+1 /(tm-m)
\Big )
\Big ) .
$$
By applying the l'H\^{o}pital rule and considering
\eqref{derpsi}, we obtain that, for any $A>0$,
\begin{align*}
& \limsup_{m \to \infty}\frac{1}{\Delta_t^{(m)}} \\
&
\leq \frac{1}{C_{\alpha, t}}
\lim_{m \to \infty}
\Big [2Am+1-2AmC_{\alpha} \Psi
\Big (\frac{1-\alpha}{2}, 1-\alpha ; \log
\Big (\frac{Am+1}{Am}
\Big )
\Big )
\Big ] \notag\\
&=\lim_{m \to \infty} \frac{2A+\frac{1}{m}-2AC_{\alpha}
 \Psi
\Big (\frac{1-\alpha}{2}, 1-\alpha ; \log
\Big (\frac{Am+1}{Am}
\Big )
\Big )}{1/m} \notag\\
&=\lim_{m \to \infty}
\frac{\frac{1}{m^2}+A(\alpha -1)
C_{\alpha}\frac{Am}{Am+1}\frac{-A}{A^2 m^2}
 \Psi
\Big (\frac{3-\alpha}{2}, 2-\alpha ;
 \log
\Big (\frac{Am+1}{Am}
\Big )
\Big )}{1/m^2} \notag \\
&=\lim_{m \to \infty}
\Big [1-\frac{Am}{Am +1}
(\alpha -1)C_{\alpha}\Psi
\Big (\frac{3-\alpha}{2}, 2-\alpha ; \log
\Big (\frac{Am+1}{Am}
\Big )
\Big )
\Big ]=0, \notag
\end{align*}
where, in the last step, we have
applied \eqref{lim} and the duplication
formula of the gamma function.
Therefore, we have that
$\lim_{m \to \infty} \Delta_t ^{(m)} = \infty $
and thus, in this case, the
process displays long memory.
\end{proof}

\begin{remark}\rm
    By considering the previous result,
we can compare the long-time behavior
of the H-fBm to the fBm itself
(herafter denoted as
$B_\alpha
:=\left\{ B_\alpha (t)\right\}_{t \geq 0}$):
let
$\delta_t ^{(m)}$
be the ratio defined in \eqref{delta} with
$X_\alpha^H(\cdot)$ replaced by the
increment of the fBm, i.e., $X_\alpha(j)
:=B_\alpha(j)-B_\alpha(j-1)$.
Then, it is easy to check that
$\delta_t ^{(m)}=m^{\alpha-1}$.

For $\alpha\in (0,1)$, we obtain from
\eqref{lrd}, \eqref{lrd2}, and \eqref{lrd3}
 that
\begin{align*}
\frac{\delta_t ^{(m)}}{\Delta_t^{(m)}}
& =  \frac{m^{\alpha-2}}{C_{\alpha, t}}
 \Big [m^2
\Big  (2t-1
\Big )-2C_\alpha S_{t,m}
\Big ]  \notag \\
&\geq   \frac{m^\alpha (2t-1)}{C_{\alpha, t}}
 - \frac{m^{\alpha}}{C_{\alpha, t}}
 \Big [
 2t-1  -\frac{1}{m}
\Big ] C_{\alpha}\Psi
\Big  (\frac{1-\alpha}{2}, 1-\alpha  ; \log
\Big  (\frac{tm}{ tm-1 }
\Big  )
\Big  ), \notag
\end{align*}
for any $t>1$.
By recalling \eqref{asy1}, we can write that
\[
C_{\alpha}\Psi
\Big (\frac{1-\alpha}{2}, 1-\alpha  ; \log
\Big (\frac{tm}{ tm-1 }
\Big )
\Big )=1+O((\log(tm/(tm-1))^\alpha),
\]
as $m \to \infty $,
so that
$\liminf_{m \to \infty}
\delta_t ^{(m)}/\Delta_t^{(m)} \geq K$,
where $K$ is a positive constant,
by \eqref{ub1}. We can then state
that the anti-persistence of the
H-fBm is at least not weaker (or stronger)
than in the case of the standard fBm,
in the sense that the variance of the
increments' sum increases not more
rapidly (and possibly more slowly)
in our case. We leave it for future
research to investigate whether the
lower bound that we derived is sharp.

On the other hand, for
$\alpha\in (1,2)$,
by considering that
$\Psi(a,b;\log(x/(x-1)))$
is non-increasing, for $a<0$,
we obtain from \eqref{lrd3} that
\[S_{t,m}\leq \frac{1}{2}\left[m^2
  (2t-1
  )-m\right] \Psi
\Big (\frac{1-\alpha}{2}, 1-\alpha  ;
 \log
\Big (\frac{tm-m+1}{ tm-m }
\Big )
\Big ), \quad t>1,
\]
so that
\begin{align*}
\frac{\delta_t ^{(m)}}{\Delta_t^{(m)}}
& =
\frac{m^{\alpha-2}}{C_{\alpha, t}}  \left[m^2
\left (2t-1
\right )-2C_\alpha S_{t,m}\right]  \notag \\
& \geq
\frac{m^\alpha (2t-1)}{C_{\alpha, t}} \\
&
\quad
- \frac{m^{\alpha}}{C_{\alpha, t}}
\Big [
 2t-1  -\frac{1}{m}
 \Big ] C_{\alpha}\Psi
\Big (\frac{1-\alpha}{2}, 1-\alpha  ;
 \log
\Big (\frac{(t-1)m+1}{(t-1)m}
\Big )
\Big ).
\end{align*}
By taking into account \eqref{asy0},
we get that
$\liminf_{m \to \infty}
\delta_t ^{(m)}/\Delta_t^{(m)} \geq +\infty$,
thus showing that the long-range
dependence is weaker than in the
case of the fBm; to be more precise,
the variance of the increments'
sum of our process diverges with
a slower rate than for fBm.
\end{remark}

As a consequence of
Def.\ \ref{def3.1} and of Lemma \ref{lem2.1},
we can give the following, finite-dimensional,
representation of H-fBm,  in terms of a
stochastic integral, which is the
analogue of the Mandelbrot-Van Ness
representation for the fBm:
  \begin{align}
      B_{\alpha}^H (t)
      & =
\frac{1}{\sqrt{\Gamma(\alpha)}}
\int _{0}^{\infty}
\Big ( \log \frac{t}{s}
\Big )_{+}^{ { \frac{\alpha-1 }{2} } }dB(s) \notag \\
&
=\frac{1}{\sqrt{\Gamma(\alpha)}}\int _{0}^{t}
\Big ( \log \frac{t}{s}
\Big )^{ { \frac{\alpha-1 }{2} } }dB(s), \label{mvn}
  \end{align}
  where
$\left\{B(t)\right\}_{t \geq 0}$ is
a standard Brownian motion.
The last integral is well-defined
by considering Lemma \ref{lem2.1}
and the equality in distribution to
the H-fBm can be easily checked
by the It\={o}-isometry and recalling
Theorem \ref{proc}.

\begin{remark}\rm
    The representation
\eqref{mvn} provides a useful tool in order
to simulate the H-fBm's trajectories.
Indeed, analogously to the fBm case,
we can simulate the H-fBm at points
$0 \leq t_1 \leq t_2 \leq ...\leq t_n =T$,
for
$n \in \mathbb{N}$, by the following procedure:
we first build a vector of
$n$ numbers
drawn according to a standard
Gaussian distribution; then we
multiply it component-wise by
$\sqrt{T}/n$
to obtain the increments of a standard Bm on
$\left[0, T \right]$, given by the vector
$
\left ( \Delta B_1, ..., \Delta B_n
\right )$. For each $t_j$ we compute
\[
\hat{B}_ \alpha ^{H}(t_{j})=
{\frac {n}{T}}\frac{1}
{\sqrt{\Gamma(\alpha)}}
\sum _{i=0}^{j-1}
\int _{t_{i}}^{t_{i+1}}
K_{H}(t_{j},\,s)\,ds\ \Delta B_{i},
\]
for
$$
K_H(t,s)
:=
\Big ( \log \frac{t}{s}
\Big )^{(\alpha -1)/2} .
$$
Finally, the integral may be
efficiently computed by the
Gaussian quadrature method.

\end{remark}

  \begin{remark}\rm
  We notice that the processes defined as
$$
\int _{0}^{\infty} k(s,t)dB(s) ,
$$
  either for the kernel
$k(s,t)=K_\alpha(t-s)^{ { \frac{\alpha-1 }{2} } }$,
with
$\alpha \in (0,2)$ (fBm case) or
for its generalization
$k_{\varphi}(s,t)= K_{\varphi} \nu(t-s),$
where
 $\nu(\cdot)$
is the tail of a  L\'{e}vy measure with
Laplace exponent
$\varphi(\cdot)$
(or its Sonine associate kernel, see
\cite{BCM}),
always possess stationary increments on
$(s,t)$, for any $s,t>0$, since the
kernel is expressed by
means of the difference $t-s$
(time-homogeneous kernel). In this case,
on the contrary, it is evident from
\eqref{mvn} that the distribution of
$B_{\alpha}^H (t)-B_{\alpha}^H (s)$
depends on the ratio $t/s$,
analogously to what happens for the
covariance of the process (see formula \eqref{cov}).
  \end{remark}

\section{Le Roy measure}

Let us denote the \textit{Le Roy function} by
$$
\mathcal{R}_{\beta
}(x)
:=\sum_{j=0}^{\infty }x^{j}/(j!)^{\beta },
$$
which is defined for
any
$x\in \mathbb{C},$
$\beta >0.$ Clearly, for
$\beta =1$, the Le Roy
function reduces to the exponential function.
In view of what follows, we recall that a function
$
g:[0,\infty )\rightarrow \lbrack 0,\infty )$
 is completely monotone if
$
(-1)^{n}g^{(n)}(x)\geq 0$, for any
$x\geq 0,$
$n\in \mathbb{N},$ and that
$
\mathcal{R}_{\beta }(-s)$,
$s>0$, is completely monotone, for any
$\beta \in
(0,1]$ (see e.g.\
\cite{BOU},
\cite{SIM}). Thus, we will hereafter restrict to
this interval for the parameter
$\beta .$
Thanks to the complete monotonicity and
 by considering the Bernstein theorem
(see
\cite{SCH}, p.\ 3), there exists a measure
$\mu _{\beta }:\mathbb{R}
_{+}\rightarrow \lbrack 0,1],$ such that
\begin{equation}
\mathcal{R}_{\beta }
\left ( -s
\right ) =\int_{\mathbb{R}_{+}}e^{-st}d\mu
_{\beta }(t),\qquad s>0.  \label{mea}
\end{equation}

Moreover, by applying the results given in
\cite{BER}, we know that for
$\beta \in (0,1)$, the
measure
$\mu _{\beta }$ is absolutely continuous
 with density function given by
 the inverse Mellin transform of
\begin{equation}
\int_{0}^{\infty }
t^{s}m_{\beta }(t)dt=\Gamma(s+1)^{1-\beta },
\qquad s > -1 \notag
,
\end{equation}
that is,
\begin{equation}
m_{\beta }(t)=\frac{1}{2\pi }
\int_{-\infty }^{\infty }t^{ix-1}\Gamma
(ix-1)^{1-\beta }dx,\qquad t>0  \label{mea2}
\end{equation}
on the positive half line.
Plainly, the
$r$-th moment is given by
\begin{equation}
\int_{0}^{\infty }
t^{r}m_{\beta }(t)dt=(r!)^{1-\beta },
\qquad r\in \mathbb{N
}.  \label{ber}
\end{equation}
It is evident from \eqref{mea}
 that for
$\beta=1$ the density
$\mu_\beta(\cdot)$ coincides
with the Dirac delta distribution
at one. Hereafter, we will consider
$\beta \in (0,1)$, while, in
some remarks, we will refer to the limiting case
$\beta=1$.

\subsection{Le Roy measure on
$\mathbb{R}^n$}
As a consequence of (\ref{mea}),
we can apply the Bochner theorem (see
\cite{REE}) and give the following

\begin{definition}
Let
$\beta \in (0,1)$.
We define the
\emph{$n$-dimensional Le Roy measure}
$\nu
_{\beta }^{n}(\cdot )$
as the unique probability measure on
$(\mathbb{R}^{n},
\mathcal{B}(\mathbb{R}^{n}))$
that satisfies:
\begin{equation}
\mathcal{R}_{\beta }
\Big ( -\frac{
 \langle\xi ,\xi \rangle}{2}
\Big ) =\int_{
\mathbb{R}^{n}}e^{i
 \langle x,\xi \rangle}
 d\nu _{\beta }^{n}(x),\qquad \xi
\in \mathbb{R}^{n}.  \label{ch}
\end{equation}
Moreover, let
$
\left ( X_{\beta ,1},...,X_{\beta ,n}
\right )
$ be the random
vector (with values in
$\mathbb{R}_{+}^{n}$) with joint distribution
$\mu
_{\beta }^{n}(\cdot )$ and characteristic function
$$
\Phi _{\beta }(\xi
_{1},...,\xi _{n})
:=\mathcal{R}_{\beta }
\left ( -
 \langle\xi ,\xi
\rangle/2
\right ) ,
\ \ \ \ \xi \in \mathbb{R}_{+}^{n}.
$$
\end{definition}

\begin{lem}
\label{lem1} The mixed moments of orders
$r_{1},...,r_{n}\in \mathbb{N}$ of
the random vector
$
\left ( X_{\beta ,1},...,X_{\beta ,n}
\right ) $ are
\begin{align}
M_{r_{1},...,r_{n}} &
:=\mathbb{E}
\left[ X_{\beta ,1}{}^{r_{1}}\cdot \cdot
\cdot X_{\beta ,n}{}^{r_{n}}
\right]  \label{mm} \\
&=\left\{
\begin{array}{l}
0,\qquad
\text{\ for at least one }r_{j}=2m_{j}+1 \\
2^{-m}(m!)^{1-\beta }
\prod\limits_{j=1}^{n}\frac{(2m_{j})!}{m_{j}!}
,\qquad \text{\ for }r_{j}=2m_{j},\;j=1,...,n
\end{array}
\right.  \notag
\end{align}
where
$m_{j}=1,2,...$, and
$m=\sum_{j=1}^{n}m_{j}$.
\end{lem}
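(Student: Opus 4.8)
The plan is to read the mixed moments off the derivatives of the characteristic function $\Phi_\beta$ at the origin. Since the Le Roy function $\mathcal{R}_\beta$ is entire, the composition $\Phi_\beta(\xi)=\mathcal{R}_\beta(-\langle\xi,\xi\rangle/2)$ is real-analytic on $\mathbb{R}^n$; in particular it is of class $C^\infty$, so all mixed moments of $\left(X_{\beta,1},\dots,X_{\beta,n}\right)$ are finite and given by
\[
M_{r_1,\dots,r_n}=i^{-(r_1+\cdots+r_n)}\,\frac{\partial^{\,r_1+\cdots+r_n}}{\partial\xi_1^{r_1}\cdots\partial\xi_n^{r_n}}\Phi_\beta(\xi)\Big|_{\xi=0}.
\]
First I would expand $\Phi_\beta$ as a power series by substituting $x=-\langle\xi,\xi\rangle/2$ into $\mathcal{R}_\beta(x)=\sum_{k\ge0}x^k/(k!)^\beta$, obtaining
\[
\Phi_\beta(\xi)=\sum_{k=0}^{\infty}\frac{(-1)^k}{2^k (k!)^\beta}\left(\xi_1^2+\cdots+\xi_n^2\right)^k,
\]
which converges for every $\xi\in\mathbb{R}^n$ (again because $\mathcal{R}_\beta$ is entire) and may therefore be differentiated term by term.

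The vanishing case is then immediate: every summand above is an even function of each coordinate $\xi_j$, so any derivative $\partial_{\xi_j}^{r_j}$ with $r_j$ odd still vanishes at $\xi=0$. This yields $M_{r_1,\dots,r_n}=0$ whenever at least one $r_j=2m_j+1$, which is equivalently the symmetry of the distribution forced by the evenness of $\Phi_\beta$.

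For the even case $r_j=2m_j$, set $m=\sum_{j=1}^{n}m_j$, so the total order of differentiation is $2m$. Only the term $k=m$ in the series has total degree $2m$ in the variables $\xi_j$ and is therefore the unique one contributing a nonzero value after $2m$ derivatives at the origin; the lower-degree terms are annihilated and the higher-degree ones vanish at $0$. Expanding $\left(\xi_1^2+\cdots+\xi_n^2\right)^m$ by the multinomial theorem, the coefficient of $\prod_{j=1}^{n}\xi_j^{2m_j}$ equals $m!/\prod_{j=1}^{n} m_j!$, while $\partial_{\xi_j}^{2m_j}\xi_j^{2m_j}=(2m_j)!$. Collecting these factors gives
\[
\frac{\partial^{2m}}{\partial\xi_1^{2m_1}\cdots\partial\xi_n^{2m_n}}\Phi_\beta(\xi)\Big|_{\xi=0}=\frac{(-1)^m}{2^m (m!)^\beta}\,\frac{m!}{\prod_{j=1}^{n} m_j!}\,\prod_{j=1}^{n} (2m_j)!.
\]
Finally I would multiply by $i^{-2m}=(-1)^m$, which cancels the sign $(-1)^m$, and use $m!/(m!)^\beta=(m!)^{1-\beta}$, reaching $M_{r_1,\dots,r_n}=2^{-m}(m!)^{1-\beta}\prod_{j=1}^{n}(2m_j)!/m_j!$, exactly as claimed in \eqref{mm}.

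The conceptual content is simply that $\Phi_\beta$ is a power series in $\langle\xi,\xi\rangle$, so the only place where care is needed is the routine bookkeeping of the factorial and power-of-two factors, and I expect that to be the sole (minor) obstacle. An entirely equivalent and perhaps more transparent route would exploit the scale-mixture structure implied by \eqref{mea}: writing $e^{-t\langle\xi,\xi\rangle/2}=\prod_{j=1}^{n} e^{-t\xi_j^2/2}$ exhibits the vector as a variance mixture of independent centered Gaussians $N(0,t)$ with $t$ distributed according to $\mu_\beta$. Conditioning on $t$, the odd Gaussian moments vanish and $\mathbb{E}[N(0,t)^{2m_j}]=(2m_j)!\,t^{m_j}/(2^{m_j}m_j!)$; taking the product over $j$ and integrating $t^m$ against $\mu_\beta$ via \eqref{ber}, which gives $(m!)^{1-\beta}$, reproduces \eqref{mm} directly.
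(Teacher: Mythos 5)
Your proof is correct, but your primary route differs from the paper's. The paper first establishes the subordination identity $\left( X_{\beta,1},\dots,X_{\beta,n}\right)\overset{f.d.d.}{=}\left(\sqrt{Y_\beta}X_1,\dots,\sqrt{Y_\beta}X_n\right)$, with $Y_\beta\sim\mu_\beta$ independent of a standard Gaussian vector, by computing the conditional characteristic function; it then reads off the moments from the known Gaussian mixed moments together with $\mathbb{E}[Y_\beta^m]=(m!)^{1-\beta}$ from \eqref{ber}. This is precisely the ``entirely equivalent route'' you sketch in your closing paragraph, so you have in effect reproduced the paper's argument as your alternative. Your main argument instead differentiates the series for $\Phi_\beta(\xi)=\mathcal{R}_\beta(-\langle\xi,\xi\rangle/2)$ term by term at the origin and isolates the $k=m$ term via the multinomial theorem; the bookkeeping of the factors $(-1)^m$, $2^{-m}$, $m!/\prod_j m_j!$ and $\prod_j(2m_j)!$ is all correct and yields \eqref{mm}. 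The one point you should make explicit is the converse moment theorem you are invoking: finiteness of the even mixed moments is being deduced from the $C^\infty$ (indeed real-analytic) character of $\Phi_\beta$ at the origin, which is a standard but nontrivial fact about characteristic functions. The trade-off is that your computation is self-contained and avoids constructing the auxiliary random variable $Y_\beta$, whereas the paper's mixture representation is not just a device for this lemma --- it is reused later (e.g.\ in the characterization \eqref{C1} of the Le Roy--Hadamard motion as a randomly scaled Gaussian process), so the paper gets more mileage out of its choice.
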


\begin{proof}[\bf Proof]
We start by proving that the following
equality of the finite dimensional
distributions holds
\begin{equation}
\left ( X_{\beta ,1},...,X_{\beta ,n}
\right ) \overset{f.d.d.}{=}
\left ( \sqrt{
Y_{\beta }}X_{1},...,\sqrt{Y_{\beta }}X_{n}
\right ) ,  \label{fdd}
\end{equation}
where
$Y_{\beta }$ is a random variable with distribution
$P(Y_{\beta }\in
B)=\mu _{\beta }(B),$ for any
$B\in \mathcal{B}(\mathbb{R}_{+})$,
independent from the standard Gaussian vector
$
\left ( X_{1},...,X_{n}
\right )
$ and
$\mu _{\beta }$ is the measure
defined in (\ref{mea}). Indeed, by
conditioning and considering
(\ref{ch}), we have that
\begin{align*}
&
\mathbb{E}\exp
\Big \{ i\xi _{1}\sqrt{Y_{\beta }}
X_{1}+...+i\xi _{n}\sqrt{
Y_{\beta }}X_{n}\Big \}  \\
& =
 \mathbb{E}_{\mu _{\beta }}
\Big  ( \exp \Big \{ -
\frac{1}{2}
\Big  ( \xi _{1}^{2}Y_{\beta }
+...+\xi _{n}^{2}Y_{\beta }
\Big  )
\Big \}
\Big  )  = \mathcal{R}_{\beta }
\Big  ( -\frac{
 \langle\xi ,\xi \rangle}{2}
\Big  )
=\Phi _{\beta }(\xi _{1},...,\xi _{n}).
\end{align*}
Then, according to (\ref{fdd}),
we can easily derive the moments in (\ref{mm}
), by applying (\ref{ber}) and
recalling that, for a standard Gaussian vector
the odd order moments are null, while
$$
\mathbb{E}\left[ X_{1}^{2m_{1}}\cdot
\cdot \cdot X_{n}^{2m_{n}}\right] =2^{-m}\prod
\limits_{j=1}^{n}(2m_{j})!/m_{j}! ,
$$
 where
$m=\sum_{j=1}^{n}m_j$. Hence, we get
\begin{equation*}
M_{r_{1},...,r_{n}}=
\prod\limits_{j=1}^{n}\mathbb{E}\left[ X_{j}^{2m_{j}}
\right] \mathbb{E}\left[ Y_{\beta }^{m_{j}}\right] ,
\end{equation*}
where we have also considered the independence among
$X_{1},...,X_{n},Y_{
\beta }.$
\end{proof}

We now prove that the measure
$\nu _{\beta }^{n}(\cdot )$ can be obtained as
product measure of
$\nu _{\beta }^{k}(\cdot )$ and
$\nu _{\beta }^{l}(\cdot
)
$ for
$k,l\in \mathbb{N}$ and such that
$k+l=n,$ only in the limiting case
$\beta =1$
. To this aim we start by evaluating
the first Hermite polynomials
$
H_{j}^{\beta },$
$j=0,1,2,3,4$, orthogonal in
$L^{2}(\mathbb{R},\nu _{\beta
}^{1})$ and with
$\deg H_{j}^{\beta }=j$.
To this aim, we solve the
system of equations
$$
\mathbb{E}\left[ X_{\beta
}^{k}(a_{0}+a_{1}X_{\beta }+...+
a_{j-1}X_{\beta }^{j-1}+X_{\beta }^{j})
\right] =0 ,
\ \text{ for
$k=0,1,...,j$.}
$$
 By considering
(\ref{mm}), we obtain that
\begin{eqnarray}
H_{0}^{\beta }(x)
&\equiv &1,\qquad H_{1}^{\beta }(x)=x,
 \label{her} \\
H_{2}^{\beta }(x)
&=&x^{2}-1,\qquad
 H_{3}^{\beta }(x)=x^{3}-\frac{3!}{
(2!)^{\beta }}x,  \notag \\
H_{4}^{\beta }(x)
 &=&x^{4}+a_{2}x^{2}+a_{0},  \notag
\end{eqnarray}
where
\begin{equation*}
a_{0}=-\frac{6}{2^{\beta }}-
\frac{6-90\cdot 3^{-\beta }}{6-2^{\beta }}
,\qquad a_{2}=\frac{6-90\cdot
3^{-\beta }}{6-2^{\beta }}.
\end{equation*}
As a check, we can see that in
 the limiting case
$\beta =1,$ we obtain the
well-known first five Hermite
polynomials that hold for the Gaussian
measure. It is immediate from
(\ref{her}) that
\begin{align*}
& \int_{\mathbb{R}^{2}}H_{4}^{\beta }
(x_{1})H_{2}^{\beta }(x_{2})d\nu
_{\beta }^{2}(x)
=\int_{\mathbb{R}^{2}}(x_{1}^{4}
+a_{2}x_{1}^{2}+a_{0})(x_{2}^{2}-1)d\nu
_{\beta }^{2}(x) \\
&=\frac{(18-6\cdot 3^{\beta })
(6-2^{\beta })+(2\cdot 3^{\beta }-6^{\beta
})(6-90\cdot 3^{-\beta })}
{6^{\beta }(6-2^{\beta })}=0,
\end{align*}
if and only if
$\beta =1.$

\subsection{Le Roy grey noise space}
Let now denote by
$\mathcal{S}
:=\mathcal{S}(\mathbb{R)}$ the Schwartz space
of the infinitely differentiable,
 rapidly decreasing functions and by
$
\mathcal{S}^{\prime }
:=\mathcal{S}^{\prime }(\mathbb{R)}$
 its dual. Then it
is well-known that
$\mathcal{S}\subset L^{2}
(\mathbb{R},dx)\subset \mathcal{S
}^{\prime }$ is a nuclear
triple and we can define the measure
$\nu _{\beta
}
$ on
$(\mathcal{S}^{\prime },\mathcal{B})$
 by considering
the Bochner-Minlos theorem, where
$\mathcal{B}$ is the
$\sigma
$-algebra generated by the cylinders
\cite{HID}.

\begin{definition}\label{cfc}
Let
$\beta \in (0,1)$, we define the
infinite-dimensional Le Roy measure
$
\nu _{\beta }(\cdot )$ as the unique
probability measure such that
\begin{equation}
\int_{\mathcal{S}^{\prime }}
e^{i\left
 \langle x,\xi \right\rangle }d\nu
_{\beta }(x)=\mathcal{R}_{\beta }
\Big ( -\frac{
 \langle \xi ,\xi \rangle }{2}
\Big ) \text{,\qquad }\xi \in \mathcal{S}.
 \label{cfceq}
\end{equation}
We call
$(\mathcal{S}^{\prime },
\mathcal{B},\nu _{\beta })$
the \emph{Le Roy
grey noise space} and we denote by
$L^2(\nu_\beta)$
the corresponding Hilbert space
$L^2(\mathcal{S}^{\prime },
\mathcal{B},\nu _{\beta })$.
\end{definition}

We denote by
$
 \langle \cdot ,\cdot \rangle
$ not only the inner product in
$
L^{2}(\mathbb{R},dx)\times L^{2}
(\mathbb{R},dx)$, but also the dual pairing
on
$\mathcal{S}^{\prime }
(\mathbb{R})\times \mathcal{S}(\mathbb{R})$.
Moreover, we consider its extension to
$\mathcal{S}^{\prime }(\mathbb{R}
)\times L^{2}(\mathbb{R},dx).$
Finally, let
$SP$ denote the set of the special permutations
\begin{equation*}
(1,2,...,2m)\rightarrow (r_{1},
s_{1},...r_{m},s_{m})
\end{equation*}
such that
$r_{1}<r_{2}<...<r_{n},$ and
$r_{j}<s_{j}$, for
$j=1,2,...,m$ (see
\cite{SCHN}).

\begin{lem}\label{lem3.2}
The moments of the measure
$\nu _{\beta }$ are given by
\begin{eqnarray}
&&\int_{\mathcal{S}^{\prime }(\mathbb{R})}\langle u,\xi \rangle ^{2m}d\nu
_{\beta }(u)=\frac{(2m)!}{2^{m}(m!)^{\beta }}\langle \xi ,\xi \rangle
^{m},\qquad \int_{\mathcal{S}^{\prime }(\mathbb{R})}\langle u,\xi \rangle
^{2m+1}d\nu _{\beta }(u)=0,  \label{ss} \\
&&\int_{\mathcal{S}^{\prime }(\mathbb{R})}\prod\limits_{j=1}^{2m}\langle
u,\xi _{j}\rangle d\nu _{\beta }(u)=(m!)^{1-\beta
}\sum_{SP}\prod\limits_{k=1}^{m}\langle \xi _{r_{k}},\xi _{s_{k}}\rangle
,\qquad \int_{\mathcal{S}^{\prime }(\mathbb{R})}\prod\limits_{j=1}^{2m+1}%
\langle u,\xi _{j}\rangle d\nu _{\beta }(u)=0, \notag \\
&&  \qquad  \label{ss2}
\end{eqnarray}%
for
$m\in \mathbb{N}$,
$\xi ,\xi _{i} \in \mathcal{S}(\mathbb{R}),$
$
i\in \mathbb{N}$.
\end{lem}

\begin{proof}[\bf Proof]
Formula (\ref{ss}) can be easily
obtained by considering Lemma \ref{lem1},
while for (\ref{ss2}), for
$m=1$, according to (\ref{cfc}),
 we can write that
\begin{align*}
&
\int_{\mathcal{S}^{\prime }(\mathbb{R})}
 \langle u,\xi _{1}\rangle
 \langle
u,\xi _{2}\rangle d\nu _{\beta }(u) \\
& =i^{-2}\frac{\partial ^{2}}{\partial
a_{1}\partial a_{2}}\mathcal{R}_{\beta }
\Big ( -\frac{
 \langle a_{1}\xi
_{1}+a_{2}\xi _{2},a_{1}\xi _{1}
+a_{2}\xi _{2}\rangle }{2}
\Big ) \bigg\rvert
_{a_{1}=a_{2}=0} \\
& =-\frac{\partial ^{2}}{\partial
a_{1}\partial a_{2}}
\mathcal{R}_{\beta}
\Big ( -\frac{a_{1}^{2}\lVert
 \xi _{1}\rVert ^{2}+a_{2}^{2}\lVert \xi
_{2}\rVert ^{2}+2a_{1}a_{2}
 \langle \xi _{1},\xi _{2}\rangle }{2}
\Big )
\bigg\rvert_{a_{1}=a_{2}=0} \\
& =: -\frac{\partial ^{2}}
{\partial a_{1}\partial a_{2}}\mathcal{R}_{\beta
}
\Big ( -\frac{A_{a_{1},a_{2}}}{2}
\Big ) \bigg\rvert_{a_{1}=a_{2}=0}.
\end{align*}
By considering that
$$
\frac{d}{dx}\mathcal{R}_{\beta
}(-x)=-\sum_{l=0}^{\infty }
\frac{(l+1)^{1-\beta }(-x)^{l}}{(l!)^{\beta }}
$$
and that the Le Roy function is
entire (so that the interchange of sum and
derivative is allowed, see
\cite{BOU}), we obtain
\begin{align*}
&
\int_{\mathcal{S}^{\prime }(\mathbb{R})}
 \langle u,\xi _{1}\rangle
 \langle
u,\xi _{2}\rangle d\nu _{\beta }(u) \\
 & =\frac{\partial }{\partial a_{1}}
\Big [
(a_{2}\lVert \xi _{1}\rVert ^{2}+a_{1}
 \langle \xi _{1},\xi _{2}\rangle
)\sum_{l=0}^{\infty }\frac{(l+1)^{1-\beta }
(-A_{a_{1},a_{2}})^{l}}{
(l!)^{\beta }} \Big ] \\
& =
 \langle \xi _{1},\xi _{2}\rangle
  \sum_{l=0}^{\infty }\frac{(l+1)^{1-\beta
}(-A_{a_{1},a_{2}})^{l}}{(l!)^{\beta }}
 +(a_{2}\lVert \xi _{1}\rVert
^{2} \\
& \quad
+a_{1}
 \langle \xi _{1},\xi _{2}\rangle )
 \sum_{l=1}^{\infty }\frac{
(l+1)^{1-\beta }(-A_{a_{1},a_{2}})^{l-1}}
{(l!)^{\beta }}\frac{\partial }{
\partial a_{1}}A_{a_{1},a_{2}},
\end{align*}
which coincides with (\ref{ss2}) with
$m=1,$ as, for
$a_{1}=a_{2}=0$ we
have that
$A_{a_{1},a_{2}}=0$ and
$\frac{\partial }{\partial a_{1}}
A_{a_{1},a_{2}}=0$. By means of a
similar reasoning, we obtain (\ref{ss2}),
 for
$
m\geq 2.$
\end{proof}

\begin{remark}\rm
We note that the covariance given in
formula (\ref{ss2}), for
$m=1,$ is not
affected by the parameter
$\beta
$ and thus it coincides with that obtained
under a Gaussian measure.
This result differs to what happens in the case of
other grey noise measures, for
example in the Mittag-Leffler case
(see \cite{GRO}) or in the incomplete-gamma case (see
\cite{BEG}).
\end{remark}

\subsection{Spaces of test functions and
distributions for the Le Roy measure}

In this subsection, we prove the existence of
test functions and distributions spaces for
$\nu_\beta$ and we establish the
characterization theorems and tools
for the analysis of the corresponding
distribution spaces.
We follow the theory given in
\cite{KSWY98}.
In order to build the latter spaces,
we show the analyticity of the Le Roy
measure on
$(\mathcal{S}', \mathcal{B})$
by proving the following properties:

\begin{enumerate}
\item[A1)] For
$\beta \in (0,1)$,
$\nu_{\beta}$ has an
analytic Laplace transform in a neighborhood of zero
$\mathcal{U} \subset \mathcal{S}_{
\mathbb{C}}$:
\begin{equation}\label{propertyA1}
\mathcal{S}_{\mathbb{C}}
\supset \mathcal{U} \ni \phi
 \mapsto \ell_{\nu_{\beta}} (\phi)
:=\int_{\mathcal{S
}^{\prime}}\exp{
 \langle \omega,\phi
 \rangle }d\nu_{\beta}(\omega)=\mathcal{R}_\beta
 \Big (\frac{
 \langle \phi, \phi
 \rangle}{2}
\Big ).
\end{equation}

\item[A2)] \label{propertyA2} For
$\beta \in (0,1)$,
$\nu_{\beta}(\mathcal{U}
)>0$ for any non-empty open subset
$\mathcal{U}\subset \mathcal{S}^{\prime}$.
\end{enumerate}

We now prove Property A1 by showing that,
for the measure defined in (\ref{mea}),
the Laplace transform is well-defined and
that it is holomorphic. To this aim,
following the complexification procedure
of a real Hilbert space as a direct sum, we define
$$
\mathcal{S
}_{\mathbb{C}}
:=\mathcal{S}\oplus i \mathcal{S}=\{\xi_1 + i \xi_2 |
\xi_1,\xi_2 \in \mathcal{S}\}
$$
 and the bilinear extension of the scalar product in
$\mathcal{S}$ as
$
 \langle \xi,\phi \rangle_{\mathcal{S
}_{\mathbb{C}}}
:=
 \langle \overline{\xi},\phi \rangle_\mathcal{S
}
$ (for further details, see
\cite{DIN}).

\begin{lem}
\label{Le1} Let
$\beta \in (0,1)$ and
$\lambda \in \mathbb{R}/\{0\}$, then
the exponential function
$\mathcal{S}^{\prime }\ni x \mapsto
e^{|\lambda
 \langle x,\phi \rangle |}$ is
 integrable with respect to
$\nu _{\beta
}(\cdot)$, and
\begin{equation}
\ell_{\nu_\beta}(\lambda \phi)
:=\int_{\mathcal{S}^{\prime }}e^{\lambda
 \langle x,\phi \rangle }d\nu _{\beta
}(x)=\mathcal{R}_{\beta }
\Big ( \frac{\lambda ^{2}
 \langle \phi, \phi \rangle}{2}
\Big )
,\qquad \text{ for }\phi \in \mathcal{S}_{\C},  \label{le}
\end{equation}
is holomorphic in
$\mathcal{S}_{\mathbb{C}}$.
\end{lem}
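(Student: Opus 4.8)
The plan is to prove two things: first, that $e^{|\lambda\langle x,\phi\rangle|}$ is $\nu_\beta$-integrable so that $\ell_{\nu_\beta}(\lambda\phi)$ is well-defined, and second, that the resulting map $\phi\mapsto\ell_{\nu_\beta}(\lambda\phi)$ extends holomorphically to $\mathcal{S}_{\mathbb{C}}$ with the stated closed-form value. The natural starting point is the moment formula \eqref{ss} from Lemma \ref{lem3.2}, which controls the even moments of $\langle x,\phi\rangle$ and shows that the odd ones vanish.

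First I would establish integrability. Since $e^{|t|}\le e^{t}+e^{-t}$, it suffices to bound $\int_{\mathcal{S}'}e^{\lambda\langle x,\phi\rangle}d\nu_\beta(x)$ for real $\lambda$, and by monotone convergence this equals $\sum_{m\ge 0}\frac{\lambda^{m}}{m!}\int_{\mathcal{S}'}\langle x,\phi\rangle^{m}d\nu_\beta(x)$ once the series of absolute values converges. Using \eqref{ss}, the odd terms drop out and the even terms give
\begin{equation}
\sum_{m=0}^{\infty}\frac{\lambda^{2m}}{(2m)!}\cdot\frac{(2m)!}{2^{m}(m!)^{\beta}}\langle\phi,\phi\rangle^{m}=\sum_{m=0}^{\infty}\frac{1}{(m!)^{\beta}}\left(\frac{\lambda^{2}\langle\phi,\phi\rangle}{2}\right)^{m}=\mathcal{R}_{\beta}\left(\frac{\lambda^{2}\langle\phi,\phi\rangle}{2}\right),
\end{equation}
which is exactly the claimed value for real $\phi$. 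Convergence of this series for every real argument follows because $\mathcal{R}_\beta$ is entire (as recalled in the Le Roy measure section), so the interchange of sum and integral is justified and integrability of $e^{|\lambda\langle x,\phi\rangle|}$ holds. This settles \eqref{le} on the real space $\mathcal{S}$.

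Next I would pass to holomorphy on $\mathcal{S}_{\mathbb{C}}$. Writing $\phi=\xi_1+i\xi_2$ and using the bilinear extension $\langle\xi,\phi\rangle_{\mathcal{S}_{\mathbb{C}}}=\langle\overline{\xi},\phi\rangle_{\mathcal{S}}$, I would argue that $\ell_{\nu_\beta}(\lambda\phi)$ is Gâteaux-holomorphic: for fixed $\phi,\psi\in\mathcal{S}_{\mathbb{C}}$ the map $z\mapsto\ell_{\nu_\beta}(\lambda(\phi+z\psi))$ is complex-differentiable on a neighborhood of $0\in\mathbb{C}$, which can be verified by differentiating under the integral sign (justified by the same dominated-convergence bound coming from the entire function $\mathcal{R}_\beta$ and the explicit even-moment estimate). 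Local boundedness plus Gâteaux-holomorphy then upgrades to holomorphy in the sense appropriate to the nuclear space $\mathcal{S}_{\mathbb{C}}$. Finally, since both sides of \eqref{le} are holomorphic in $\phi$ and agree on the real subspace $\mathcal{S}$, which is a set of uniqueness for holomorphic functions on the complexification, the identity $\ell_{\nu_\beta}(\lambda\phi)=\mathcal{R}_\beta\!\left(\tfrac{\lambda^2\langle\phi,\phi\rangle}{2}\right)$ extends to all of $\mathcal{S}_{\mathbb{C}}$.

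I expect the main obstacle to be the passage from the real identity to genuine holomorphy on $\mathcal{S}_{\mathbb{C}}$, rather than the moment computation, which is essentially bookkeeping with \eqref{ss}. The delicate points are (i) producing a uniform integrable dominating bound on $|e^{\lambda\langle x,\phi\rangle}|=e^{\lambda\langle x,\xi_1\rangle}$ that is locally uniform in the complex directions, so that differentiation under the integral is rigorous, and (ii) invoking the correct notion of holomorphy on the infinite-dimensional space $\mathcal{S}_{\mathbb{C}}$ together with the identity theorem in that setting. Both can be handled by reducing to finite-dimensional sections and using the entire-function growth of $\mathcal{R}_\beta$, but they require care to state precisely.
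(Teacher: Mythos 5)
Your overall strategy coincides with the paper's: expand the exponential, use the moment formula \eqref{ss} to resum the even terms into the Le Roy function, and then obtain holomorphy on $\mathcal{S}_{\mathbb{C}}$ from holomorphy along complex lines plus continuity/local boundedness (the paper uses Morera's theorem together with Fubini where you propose differentiation under the integral sign; in this setting the two are interchangeable).

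There is, however, one genuine gap in the integrability step. To interchange sum and integral in $\int_{\mathcal{S}'} e^{\lambda\langle x,\phi\rangle}\,d\nu_\beta(x)$ you correctly state that you need convergence of the series of absolute values, i.e.\ of $\sum_{m}\frac{|\lambda|^{m}}{m!}\int_{\mathcal{S}'}|\langle x,\phi\rangle|^{m}\,d\nu_\beta(x)$, but you never verify it: you pass immediately to the signed series, where ``the odd terms drop out''. The vanishing in \eqref{ss} concerns $\int\langle x,\phi\rangle^{2n+1}d\nu_\beta=0$, not the absolute odd moments $\int|\langle x,\phi\rangle|^{2n+1}d\nu_\beta$, which are strictly positive and are not supplied by \eqref{ss}. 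Controlling them is precisely the technical content of the paper's proof, which bounds each odd absolute moment via Cauchy--Schwarz by the even moments of orders $2n$ and $2n+2$ and then checks summability of the resulting majorant by the ratio test (this is where $\beta<1$ enters). Your own first inequality already contains an alternative fix: $e^{|t|}\le e^{t}+e^{-t}=2\sum_{n\ge 0}t^{2n}/(2n)!$ is a series of nonnegative terms containing only even powers, so Tonelli applies to it directly and gives $\int e^{|\lambda\langle x,\phi\rangle|}d\nu_\beta\le 2\,\mathcal{R}_{\beta}\bigl(\lambda^{2}\langle\phi,\phi\rangle/2\bigr)<\infty$ without any odd term ever appearing; but as written your argument asserts rather than proves the absolute convergence on which the interchange rests. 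The remainder of your proposal (G\^{a}teaux holomorphy plus local boundedness, and the identity theorem to propagate the closed form from $\mathcal{S}$ to $\mathcal{S}_{\mathbb{C}}$) is sound and consistent with the paper.
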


\begin{proof}[\bf Proof]
We start by proving the integrability, for
$\lambda \in \mathbb{R}/\{0\}$.
We can define the monotonically increasing sequence
$$
g_{N}(\cdot )
:=\sum_{n=0}^{N}
\frac{1}{n!}|
 \langle \cdot ,\lambda \phi \rangle |^{n} .
 $$
 We divide this sum
into odd and even terms,
\begin{align*}
g_{N}(x)& =\sum_{n=0}^{\lfloor N/2\rfloor }\frac{1}{(2n)!}|
 \langle x,\lambda
\phi \rangle |^{2n}+
\sum_{n=0}^{\lceil N/2\rceil -1}\frac{1}{(2n+1)!}
|
 \langle x,\lambda \phi \rangle |^{2n+1}\\
&=:\sum_{n=0}^{\lfloor N/2\rfloor
}O_{2n}(x)+\sum_{n=0}^{\lceil N/2\rceil -1}O_{2n+1}(x).
\end{align*}
For the even terms we get from (\ref{ss}) that:
\[
\int_{\mathcal{S}^{\prime }}O_{2n}(x)
d\nu _{\beta }(x)=\frac{(\lambda
^{2}
 \langle \phi, \phi
  \rangle/2)^{n}}{(n!)^{\beta }}=:E_{n}.
\]
We estimate the odd terms using the
Cauchy-Schwarz inequality on
$L^{2}(
\mathcal{S}^{\prime },$ $\mathcal{B},
\nu _{\beta })$ and the inequality
$
st\leq 1/2(s^{2}+t^{2})$, for
$s,t\in \mathbb{R}$:
\begin{align*}
&\int_{\mathcal{S}^{\prime }}
O_{2n+1}(x)d\nu _{\beta }(x) \\
& = \frac{1}{(2n+1)!}
\int_{\mathcal{S}^{\prime }}|
 \langle x,\lambda \phi
\rangle |^{n+1}|
 \langle x,\lambda \phi
  \rangle |^{n}d\nu _{\beta }(x) \\
&\leq \frac{1}{(2n+1)!}
\Big(\int_{\mathcal{S}^{\prime }}|
 \langle x,\lambda
\phi \rangle |^{2n+2}
d\nu _{\rho ,\theta }(x)
\Big)^{1/2}\Big(\int_{\mathcal{S
}^{\prime }}|
 \langle x,\lambda \phi
 \rangle |^{2n}d\nu _{\rho ,\theta }(x)
\Big)^{1/2} \\
&\leq \frac{1}{2}
\Big(\int_{\mathcal{S}^{\prime }}\frac{|
 \langle x,\lambda
\phi \rangle |^{2n+2}(2n+2)}{(2n+2)!}
d\nu _{\rho ,\theta }(x)+\int_{\mathcal{
S}^{\prime }}\frac{|
 \langle x,\lambda \phi
 \rangle |^{2n}}{(2n)!}d\nu _{\rho
,\theta }(x)\Big) \\
& = (n+1)\int_{\mathcal{S}^{\prime }}
O_{2n+2}(x)d\nu _{\beta }(x)+\frac{1}{2}
\int_{\mathcal{S}^{\prime }}
O_{2n}(x)d\nu _{\beta }(x) \\
&\leq (n+1)\frac{(\lambda ^{2}
 \langle \phi, \phi
 \rangle/2)^{n+1}}{((n+1)!)^{\beta }}+
\frac{(\lambda ^{2}
 \langle \phi, \phi
 \rangle/2)^{n}}{(n!)^{\beta }}
 =(n+1)E_{n+1}+E_{n}.
\end{align*}
Considering the odd and even terms
together, we can write that
\[
\int_{\mathcal{S}^{\prime }}g_{N}(x)
d\nu _{\beta }(x)\leq E_{\lfloor
N/2\rfloor }+2
\sum_{n=0}^{\lceil N/2\rceil -1}E_{n}
+\sum_{n=0}^{\lceil
N/2\rceil -1}(n+1)E_{n+1}<\infty .
\]
Since, for any
$\phi (\cdot )$ with
$\Vert \phi \Vert ^{2}<\infty ,$
$
\lim_{N\rightarrow \infty }
E_{\lfloor N/2\rfloor }=0$ and
\[
\lim_{N\rightarrow \infty }
\sum_{n=0}^{\lfloor N/2\rfloor }E_{n}
=\mathcal{R}
_{\beta }
 \Big ( \frac{\lambda ^{2}
 \langle \phi, \phi \rangle}{2}
\Big ) ,
\]
we conclude that
$$
\lim_{N\rightarrow \infty }
\int_{\mathcal{S}^{\prime
}}g_{N}(x)d\nu _{\beta }(x)<\infty ,
$$
by applying the monotone converge
theorem and the ratio criterion (since
$\lim_{n\rightarrow \infty
}(n+1)E_{n+1}/nE_{n}=0,$ for
$\beta <1$). Equation (\ref{le}) simply
follows.

Finally, it is easy to check that
$\ell_\beta(\cdot)$ is a continuous
function (following the same
lines of the proof of Theorem 2.1 in
\cite{BEG}).
Then, in order to prove that it is
holomorphic, by the Morera's theorem,
it is enough to prove that,
for any closed and bounded curve
$\gamma\in \mathbb{C}$,
$\int_{\gamma}\ell_\beta(z)dz=0$.
Indeed, this holds in view of the
 Fubini's theorem,
\begin{equation*}
\int_{\gamma}\int_{\mathcal{S}^{\prime}}e^{
 \langle x, \xi + z\eta
\rangle}d\nu_{\beta}(x)dz=
\int_{\mathcal{S}^{\prime}}\int_{\gamma}e^{
\langle x, \xi + z\eta \rangle}dz d\nu_{\beta}(x)=0
\end{equation*}
as the exponential function is holomorphic.
\end{proof}

In order to verify that Property A2
is satisfied by
$\nu_{\beta}$, we prove that, for
$\beta \in (0,1)$,
$\nu_{\beta}$ is always strictly
positive on non-empty, open
subsets, by resorting to their
representation as mixture of
Gaussian measures.

\begin{theorem}\label{thm:nonnegativmeasure}
For any open, non-empty set
$\mathcal{U}\subset \mathcal{S}^{\prime}$ and
for any
$\beta \in (0,1)$,
we have that
$
\nu_{\beta}(\mathcal{U})>0$.
\end{theorem}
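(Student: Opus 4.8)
The plan is to represent $\nu_\beta$ as a continuous superposition of centered Gaussian measures and then exploit the fact that each such Gaussian charges every non-empty open set. First I would use the subordination formula \eqref{mea}: putting $s=\langle\xi,\xi\rangle/2$ gives
\[
\mathcal{R}_\beta\!\left(-\frac{\langle\xi,\xi\rangle}{2}\right)=\int_{\mathbb{R}_+}e^{-t\langle\xi,\xi\rangle/2}\,d\mu_\beta(t),\qquad \xi\in\mathcal{S}.
\]
For each fixed $t>0$ let $\nu_t$ denote the centered Gaussian measure on $(\mathcal{S}',\mathcal{B})$ with characteristic functional $e^{-t\langle\xi,\xi\rangle/2}$; equivalently, $\nu_t$ is the image of the white-noise measure $\nu$ of \eqref{cf} under the dilation $x\mapsto\sqrt{t}\,x$. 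I would then define the set function $\tilde\nu(A):=\int_{\mathbb{R}_+}\nu_t(A)\,d\mu_\beta(t)$, $A\in\mathcal{B}$. A monotone-class (Dynkin) argument shows that $t\mapsto\nu_t(A)$ is measurable for every $A\in\mathcal{B}$ — it is continuous in $t$ on the $\pi$-system of cylinder sets, where $\nu_t(A)$ is an explicit finite-dimensional Gaussian integral — and that $\tilde\nu$ is a probability measure (its total mass being $\int_{\mathbb{R}_+}1\,d\mu_\beta=1$). By Fubini its characteristic functional is exactly $\mathcal{R}_\beta(-\langle\xi,\xi\rangle/2)$, so the uniqueness in Bochner--Minlos (Definition \ref{cfc}) yields $\tilde\nu=\nu_\beta$, i.e.
\[
\nu_\beta(A)=\int_{\mathbb{R}_+}\nu_t(A)\,d\mu_\beta(t),\qquad A\in\mathcal{B}.
\]

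Next I would show that $\nu_t(\mathcal{U})>0$ for every non-empty open $\mathcal{U}$ and every $t>0$. Since the cylinder sets form a neighbourhood base for the (weak-$*$) topology, $\mathcal{U}$ contains a non-empty basic open set $\mathcal{C}=\{x:(\langle x,\xi_1\rangle,\dots,\langle x,\xi_n\rangle)\in V\}$ with $\xi_1,\dots,\xi_n\in\mathcal{S}$ linearly independent and $V\subset\mathbb{R}^n$ open and non-empty. Under $\nu_t$ the vector $(\langle x,\xi_j\rangle)_{j=1}^n$ is centered Gaussian with covariance matrix $t\,(\langle\xi_i,\xi_j\rangle)_{i,j}$, which is positive definite by the linear independence of the $\xi_j$; hence its law has a strictly positive density on $\mathbb{R}^n$ and assigns positive mass to $V$, so $\nu_t(\mathcal{C})>0$ and a fortiori $\nu_t(\mathcal{U})>0$. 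Equivalently, one may invoke that the topological support of a non-degenerate centered Gaussian measure on $\mathcal{S}'$ equals the closure of its Cameron--Martin space $L^2(\mathbb{R})$, which is all of $\mathcal{S}'$, and that dilation preserves full support.

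Finally, combining the two steps, for any non-empty open $\mathcal{U}$,
\[
\nu_\beta(\mathcal{U})=\int_{\mathbb{R}_+}\nu_t(\mathcal{U})\,d\mu_\beta(t)>0,
\]
because the integrand is strictly positive and $\mu_\beta$ is a probability measure on $\mathbb{R}_+$.

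The step I expect to be the main obstacle is the rigorous justification of the mixture representation itself: establishing the measurability of $t\mapsto\nu_t(A)$ on all of $\mathcal{B}$ and verifying that $\tilde\nu$ is a genuine countably additive probability measure to which the uniqueness clause of Bochner--Minlos applies. Once this disintegration is secured, the positivity of each Gaussian factor on open sets is classical and the conclusion is immediate.
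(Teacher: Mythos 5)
Your proposal is correct and follows essentially the same route as the paper: both represent $\nu_\beta$ as the mixture $\int_0^\infty \nu^s\,d\mu_\beta(s)$ of centered Gaussian measures, verified by matching characteristic functionals via \eqref{mea}, and then conclude from the positivity of non-degenerate Gaussian measures on open sets. You merely spell out the details (measurability of $t\mapsto\nu_t(A)$, Bochner--Minlos uniqueness, and the cylinder-set argument for the Gaussian full support) that the paper leaves implicit in the phrase ``it is sufficient to prove that $\nu_\beta$ is an elliptically contoured measure.''
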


\begin{proof}[\bf Proof]
It is sufficient to prove that
$
\nu_{\beta}$ is an elliptically
contoured measure, i.e.,
 if we denote
by
$\nu^{s}$ the centered Gaussian measure on
$\mathcal{S}'$ with
variance
$s>0$, the following holds:
\begin{equation}  \label{Gaussianity}
	\nu_{\beta}= \int_{0}^{\infty}
	 \nu^{s} \, \mathrm{d} \mu_{\beta}(s),
\end{equation}
where
$\mu_{\beta}$ is the measure defined on
$(0,\infty)$ by \eqref
{mea}. The identity in
equation~\eqref{Gaussianity}
can be checked by considering
that
\begin{equation*}
	\int_{\mathcal{S}'}\e^{\mathrm{i}
 \langle \omega, \xi \rangle}
  \mathrm{d}\nu_s(\omega)=\exp
\left (
	-\frac{s}{2}
 \langle \xi, \xi \rangle
\right ), \quad \xi \in \mathcal{S}
\end{equation*}
and thus, by \eqref{mea},
\begin{equation}
	\int_{0}^{\infty} \exp
\Big ( -\frac{s}{2}
 \langle \xi, \xi \rangle
\Big ) \mathrm{d}
	\mu_{\beta}(s)=\mathcal{R}_\beta
\Big (-\frac{1}{2}
 \langle \xi,
		\xi \rangle
\Big ),  \label{ec}
\end{equation}
which coincides with
$\int_{\mathcal{S}'} \e^{\mathrm{i}
 \langle \omega, \xi
	\rangle}\mathrm{d}\nu_{\beta}(\omega)$.
\end{proof}

By Lemma 4.12, Sec.\ 5, and Sec.\ 6 in
\cite{KSWY98}, the test function space, i.e.,
$(\mathcal{S})^{1}_{\nu_{\beta}}$,
and the distribution space, i.e.,
$(\mathcal{S})^{-1}_{\nu_{\beta}}$,
exist and we have:
\[  (\mathcal{S})^{1}_{\nu_{\beta}}
\subset L^2(\nu_{\beta})
\subset(\mathcal{S})^{-1}_{\nu_{\beta}}\]
endowed with the dual pairing
$
 \langle\!
 \langle \cdot,\cdot \rangle\!
 \rangle_{\nu_{\beta}}$ between
$ (\mathcal{S})^{-1}_{\nu_{\beta}}
$ and
$(\mathcal{S})^{1}_{\nu_{\beta}}$,
 which is the bilinear extension
  of the inner product of
$L^2(\nu_{\beta})$.

We define the
$S_{\nu_{\beta}}$-transform by means of
the normalized exponential
$e_{\nu_{\beta}}(\cdot, \xi)$:
\begin{align*}
S_{\nu_{\beta}}(\Phi )(\xi)
& :=
 \langle\!
 \langle \Phi, e_{\nu_\beta}(\cdot, \xi)
  \rangle\!\rangle_{\nu_{\beta}} \\
  &
:=\frac{1}{\mathcal{R}_\beta
\left ( \frac{1}{2}
 \langle \xi,\xi\rangle
\right )}\int_{S'}e^{
 \langle \omega, \xi\rangle}
 \Phi(\omega)\nu_{\beta}(d\omega),
  \quad \xi \in  U_{p,q},
\end{align*}
for
$\Phi \in (\mathcal{S})^{-1}_{\nu_{\beta}}$ and
$U_{p,q}
:=\{ \xi \in \mathcal{N}_{\C} \mid 2^q
 | \xi|_p<1  \}$ for some
$p,q \in \N$, see also
\cite{KSWY98}.
The properties (A1) and (A2) and the
previous remark allow us to state the
following result, which is a special
case of Theorem 8.34 in
\cite{KSWY98}.
\begin{coro}\label{thmStransisomor}
The
$S_{\nu_{\beta}}$-transform is a
topological isomorphism from
$(\mathcal{S})^{-1}_{\nu_{\beta}}$ to
$\text{Hol}_{0}(\mathcal{S}_{\mathbb{C}})$.
\end{coro}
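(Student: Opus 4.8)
The plan is to obtain the statement as a direct specialization of the general characterization theorem (Theorem 8.34 in \cite{KSWY98}), whose only hypotheses are that the underlying measure be \emph{analytic} in the sense of \cite{KSWY98}; this analyticity is precisely encoded by Properties A1 and A2. Accordingly, the first step is to record that both hypotheses have already been verified for $\nu_\beta$: Property A1, i.e.\ the holomorphy of the Laplace transform $\ell_{\nu_\beta}(\phi)=\mathcal{R}_\beta(\langle\phi,\phi\rangle/2)$ in a neighborhood of zero in $\mathcal{S}_{\mathbb{C}}$, is established in Lemma \ref{Le1}, while Property A2, the strict positivity $\nu_\beta(\mathcal{U})>0$ on every non-empty open $\mathcal{U}\subset\mathcal{S}'$, is the content of Theorem \ref{thm:nonnegativmeasure}.

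Granted A1 and A2, the construction of the associated Appell system from \cite{KSWY98} applies verbatim: the normalized exponential $e_{\nu_\beta}(\cdot,\xi)$ used to define $S_{\nu_\beta}$ serves as the generating function of a biorthogonal family of generalized Appell polynomials, and this family furnishes the Gel'fand triple $(\mathcal{S})^1_{\nu_\beta}\subset L^2(\nu_\beta)\subset(\mathcal{S})^{-1}_{\nu_\beta}$ already recorded above. The second step is then to invoke the characterization theorem itself, which asserts that for any such analytic measure the $S$-transform is a bijection of the distribution space onto $\text{Hol}_0(\mathcal{S}_{\mathbb{C}})$ and that both the transform and its inverse are continuous for the respective topologies. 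Specializing this to $\mu=\nu_\beta$ yields the desired topological isomorphism.

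The substance of the argument lies entirely in Properties A1 and A2; once these are in hand the corollary is a mechanical instantiation, and the main point to check is simply that the hypotheses of Theorem 8.34 match the two verified properties. It is worth emphasizing that the parameter $\beta$ enters only through the replacement of the exponential by the Le Roy function $\mathcal{R}_\beta$ in the normalization factor $\mathcal{R}_\beta(\tfrac12\langle\xi,\xi\rangle)$ defining $e_{\nu_\beta}(\cdot,\xi)$ and in the target space of holomorphic germs; the mechanism producing the isomorphism, namely the biorthogonal Appell construction together with the associated holomorphy estimates, is structurally independent of $\beta\in(0,1)$, so no $\beta$-specific obstruction arises.
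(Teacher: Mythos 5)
Your proposal is correct and follows essentially the same route as the paper: the corollary is obtained by verifying Properties A1 (Lemma \ref{Le1}) and A2 (Theorem \ref{thm:nonnegativmeasure}) and then invoking the general characterization theorem (Theorem 8.34 of \cite{KSWY98}) together with the Appell-system construction of the triple $(\mathcal{S})^{1}_{\nu_{\beta}}\subset L^2(\nu_{\beta})\subset(\mathcal{S})^{-1}_{\nu_{\beta}}$. The paper gives no further argument beyond this instantiation, so nothing is missing from your write-up.
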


The above characterization result
leads directly to describe the strong
convergence of sequences in
$(\mathcal{S})^{-1}_{\nu_{\beta}}$.

\begin{lem}\label{thmStransConverg}
Let
$\{\Phi_n\}_{n\in \mathbb{N}}$ be a sequence in
$(\mathcal{S})_{\nu_{\beta}}^{-1}$. Then
$\{\Phi_n\}_{n \in \mathbb{N}}$
converges strongly in
$(\mathcal{S})_{\nu_{\beta}}^{-1}$
if and only if there exist
$p,q \in \mathbb{N}$ with the
following two properties:

\begin{itemize}

\item[i)]
$\{S_{\nu_{\beta}}(\Phi_n)
(\xi)\}_{n \in \mathbb{N}}$
is a Cauchy sequence for all
$\xi \in U_{p,q}$;

\item[ii)]
$S_{\nu_{\beta}}(\Phi_n)$ is holomorphic on
$U_{p,q}$ and there is a constant
$C>0$ such that
\[
|S_{\nu_{\beta}}(\Phi_n) (\xi)|\leq C
\]
for all
$\xi \in U_{p,q}$ and for all
$n \in \mathbb{N}$.
\end{itemize}
\end{lem}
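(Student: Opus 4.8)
The plan is to transport the convergence statement through the topological isomorphism established in Corollary \ref{thmStransisomor}, thereby reducing strong convergence in $(\mathcal{S})^{-1}_{\nu_{\beta}}$ to convergence in the space $\text{Hol}_{0}(\mathcal{S}_{\mathbb{C}})$ of germs of holomorphic functions at the origin. Recall that $\text{Hol}_{0}(\mathcal{S}_{\mathbb{C}})$ is realized as the inductive limit of the Banach spaces of holomorphic functions bounded on the neighborhoods $U_{p,q}$, each equipped with the sup-norm $\|F\|_{p,q}:=\sup_{\xi \in U_{p,q}}|F(\xi)|$. Since $S_{\nu_{\beta}}$ is a topological isomorphism, $\{\Phi_n\}$ converges strongly in $(\mathcal{S})^{-1}_{\nu_{\beta}}$ if and only if $\{S_{\nu_{\beta}}(\Phi_n)\}$ converges in $\text{Hol}_{0}(\mathcal{S}_{\mathbb{C}})$; the whole argument therefore amounts to characterizing convergent sequences in this inductive limit, and the statement is in fact the specialization to $\nu_{\beta}$ of the general convergence criterion in \cite{KSWY98}, applicable verbatim once Properties (A1)--(A2) have been verified for the Le Roy measure.

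For the necessity direction I would argue as follows. If $\{\Phi_n\}$ converges strongly, then $F_n:=S_{\nu_{\beta}}(\Phi_n)$ converges in $\text{Hol}_{0}(\mathcal{S}_{\mathbb{C}})$. By the structure of this inductive limit (cf.\ \cite{KSWY98}), a convergent --- hence bounded --- sequence is contained in a single step and converges there in norm: concretely, there exist $p,q \in \N$ with $F_n \to F$ in $\|\cdot\|_{p,q}$, which yields holomorphy on $U_{p,q}$ and the uniform bound $\sup_n \|F_n\|_{p,q}=:C<\infty$ of (ii). Norm convergence on $U_{p,q}$ is uniform convergence there, which in particular forces $\{F_n(\xi)\}$ to be Cauchy for every $\xi \in U_{p,q}$, giving (i).

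For the sufficiency direction --- the substantive one --- I would start from (i) and (ii) and show that $\{F_n\}$ converges in $\text{Hol}_{0}(\mathcal{S}_{\mathbb{C}})$. By (ii) the sequence is holomorphic and uniformly bounded on $U_{p,q}$, and by (i) it is pointwise Cauchy there. The key analytic input is the infinite-dimensional Vitali theorem for holomorphic functions on open subsets of a nuclear space: a locally bounded sequence of holomorphic functions that converges pointwise converges uniformly on a smaller neighborhood. Applying it on $U_{p,q}$ produces a holomorphic limit $F$ together with convergence $F_n \to F$ in the norm $\|\cdot\|_{p',q'}$ of some $U_{p',q'}\subset U_{p,q}$, hence convergence in $\text{Hol}_{0}(\mathcal{S}_{\mathbb{C}})$. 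Since $F \in \text{Hol}_{0}(\mathcal{S}_{\mathbb{C}})$, the isomorphism of Corollary \ref{thmStransisomor} furnishes a unique $\Phi \in (\mathcal{S})^{-1}_{\nu_{\beta}}$ with $S_{\nu_{\beta}}(\Phi)=F$, and transporting back gives $\Phi_n \to \Phi$ strongly.

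The main obstacle is precisely this sufficiency step, namely the upgrade from the pointwise Cauchy condition (i) to norm convergence on a neighborhood: this is where holomorphy is indispensable and where the infinite-dimensional Vitali/Cauchy-estimate machinery of \cite{KSWY98} must be invoked, since pointwise control alone would never suffice in the inductive-limit topology. Everything else is bookkeeping through the isomorphism, with the uniform bound in (ii) serving exactly to keep the whole sequence inside one step $U_{p,q}$ so that the Vitali argument can be applied.
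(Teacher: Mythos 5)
Your argument is correct and is essentially the same route the paper takes: the paper's proof is a one-line deferral to Theorem 2.12 of \cite{GRO2}, which is proved exactly by transporting the question through the $S$-transform isomorphism of Corollary \ref{thmStransisomor} and invoking the convergence characterization for the inductive limit $\text{Hol}_{0}(\mathcal{S}_{\mathbb{C}})$ from \cite{KSWY98}. Your write-up simply makes explicit (regularity of the inductive limit for necessity, the Vitali/Cauchy-estimate upgrade from pointwise Cauchy plus local boundedness for sufficiency) what the cited references contain.
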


\begin{proof}[\bf Proof]
The proof is similar to that of Theorem 2.12 in
\cite{GRO2}.
\end{proof}

\section{Le Roy-Hadamard motion}

In view of the previous results,
we introduce a class of
generalized processes on the space
$
\left ( \mathcal{S}^{\prime
}(\mathbb{R}),\mathcal{B},\nu _{\beta }
\right )
$ defined in
Def.~\ref{cfc}, as a direct
application of the extended dual pairing
to the
function
$^H \mathcal{M}^{\alpha /2}_{-}1_{[0,t)}$, for
$t>0.$

\begin{definition}\label{defprodiff}
Let
$\beta \in (0,1)$,
$\alpha \in (0,2)$
and
$1_{[a,b)}$
 be the indicator function of
$[a,b)$,
 then the \emph{Le Roy-Hadamard motion}
 (hereafter LHm) is defined on the probability space
$
\left ( \mathcal{S}
^{\prime }(\mathbb{R}),\mathcal{B},\nu _{\beta }
\right )
$ as
$B^H_{\alpha, \beta}
:= \{ B^H_{\alpha,
 \beta}(t) \}_{t \geq 0}$, where
\begin{equation}
    B^H_{\alpha , \beta }(t,\omega )
:=\big
 \langle \omega , ^H
 \mathcal{M}^{\alpha /2}_{-}1_{[0,t)}
 \big \rangle ,\qquad
t\geq 0,\;\omega \in
\mathcal{S}^{\prime }(\mathbb{R}).
  \label{i4}
\end{equation}
\end{definition}
\subsection{Finite-dimensional
characterization}
We have that, for any
$t>0$,
$B^H_{\alpha, \beta }
(t,\cdot )\in L^{2}(\nu _{\beta })$
and, by considering
(\ref{mea}), we can write the
$n$-times characteristic function of
$B^H_{\alpha ,\beta }$ as
\begin{equation}
   \Phi _{t_1,...,t_n}
   (\theta_1,...,\theta_n )
:= \mathbb{E}e^{i\sum_{j=1}^{n}\theta_j
 B^H_{\alpha,\beta }(t_j)}=
   \mathcal{R}_{\beta }
\Big \{-\frac{1}{2}\Big \Vert
   \sum_{j=1}^{n}\theta_j
   \mathcal{M}^{\alpha/2}_{-}
   1_{[0,t_j)]}\Big \Vert^2 \Big \},
     \label{char}
\end{equation}
for
$0\leq t_1<t_2<...<t_n$ and
$\theta_j \in \mathbb{R},$ for
$j=1,...,n$. We give the following
characterizations of the process:
\begin{equation}
\left\{B^H_{\alpha,\beta} (t)
\right\}_{t \geq 0}\overset{f.d.d.}{=}
\left\{\sqrt{Y_\beta} B^H_{\alpha}(t)
\right\}_{t \geq 0}\overset{f.d.d.}{=}
\left\{ B^H_{\alpha}
\left (Y_\beta t
\right ) \right\}_{t \geq 0}, \label{C1}
\end{equation}
where
$Y_\beta$, independent of the H-fBm
$B^H_\alpha$, has distribution
$P(Y_{\beta }\in
B)=\mu _{\beta }(B),$ for any
$B\in \mathcal{B}(\mathbb{R}_{+})$.
Indeed,
\begin{equation}
  \int_{\mathbb{R}^+}
  \mathbb{E}e^{i\sum_{j=1}^{n}
  \theta_j \sqrt{y}B^H_{\alpha }(t_j)}d\mu_\beta(y)=
   \int_{\mathbb{R}^+}  \exp
\Big \{-\frac{y}{2}
\Big \Vert \sum_{j=1}^{n}\theta_j
 \mathcal{M}^{\alpha/2}_{-}1_{[0,t_j)]}
 \Big \Vert^2\Big \}d\mu_\beta(y),
   \label{C2}
\end{equation}
which, by considering \eqref{ec},
coincides with \eqref{char}. The last
equality in law follows, by recalling
 the self-similarity property
 (with parameter
$1/2$) of
$B^H_\alpha$, proved in Corollary
 \ref{corostat}.
The two characterizations in
\eqref{C1} are analogous to those
presented for the ggBm, in
\cite{MUR} and
\cite{ERR}, respectively.

It is clear from \eqref{char} that,
in the one-dimensional case, since
$$
 \Vert ^H
\mathcal{M}^{\alpha /2}_{-}1_{[0,t)}
  \Vert ^{2}=t
  $$
(in view of (\ref{der2}) and
(\ref{int2})), the dependence
on the parameter
$\alpha$ is lost, and
\begin{equation}
\Phi _{t}(\theta )
:=\mathbb{E}e^{i\theta
 B^H_{\alpha ,\beta }(t)}
 =\mathcal{R}_{\beta
}
\Big  ( -\frac{\theta ^{2}
\big \Vert ^H
 \mathcal{M}^{\alpha /2}_{-}1_{[0,t)}
 \big \Vert ^{2}}{2}
\Big  ) =
\mathcal{R}_{\beta }
\Big  ( -\frac{\theta ^{2} t}{2}
\Big  ) ,  \label{is}
\end{equation}
 $\theta \in
\mathbb{R},$ $ t \geq 0.$
It follows from (\ref{mea})
and (\ref{mea2})
that
$$
\Phi _{t}(\theta )=
\frac{1}{t}\int_{0}^{\infty }
e^{-\theta ^{2} z/2}m_{\beta
}(z/ t)dz,
$$
so that the following
equality of the one-dimensional
distribution holds, for any
$\alpha,$
\begin{equation}
B^H_{\alpha , \beta }(t)\overset{d}{=}
B(T_{\beta }(t)),\qquad t\geq 0,  \label{dd}
\end{equation}
where
$\left\{ T_{\beta }(t)\right\} _{t\geq 0}$
is a process with
transition density
$g_{\beta }(x,t)=m_{\beta }(x/t)/ t$, for
$x,t\in \mathbb{R
}_{+}$, independent of the standard Brownian motion
$\left\{ B(t)\right\}
_{t\geq 0}.$

It easily follows from (\ref{dd})
 that the process
$B^H_{\alpha ,\beta
}$ has zero mean and
\begin{equation*}
var
\left ( B^H_{\alpha ,\beta }(t)
\right ) =\mathbb{E}T_{\beta }(t)=\frac{1}{t}
\int_{0}^{\infty }zm_{\beta }(z/ t)dz= t,
\end{equation*}
in view of (\ref{ber}),
 regardless of the values of the parameters
$\alpha$ and
$\beta .$

As far as the covariance is concerned,
we can apply Lemma \ref{lem3.2} extended from
$\mathcal{S}(\mathbb{R})$ to
$L^{2}(\mathbb{R})$, so that we have
\begin{align}
cov(B^H_{\alpha , \beta}(t),
B^H_{\alpha,\beta}(s))
&=\int_{\mathcal{S}^{\prime }(\mathbb{R})}
 \langle u,^{H}
 \mathcal{M}_{-}^{\alpha/2 }1_{[0,s)}\rangle
 \langle
u,^{H}\mathcal{M}_{-}^{\alpha/2 }1_{[0,t)}
\rangle d\nu _{\beta }(u)  \notag \\
&=\left
 \langle ^{H}
 \mathcal{M}_{-}^{\alpha/2 }
 1_{[0,s)},^{H}\mathcal{M}
_{-}^{\alpha/2 }1_{[0,t)}
\right\rangle=cov(B^H _{\alpha }
(t),B^H _{\alpha}(s)),  \notag
\end{align}
by \eqref{ss2} and by Theorem \ref{proc}.
Thus, the variance of the process
$B^H_{\alpha,\beta}$ is independent
of both the parameters
$\alpha$ and
$\beta$ and coincides with that of
 the standard Brownian motion,
 while its persistence and memory
 properties are equal to those of
 the Hadamard-fBm (analysed in
 Theorem \ref{cor2.1}) and therefore
  they depend only on
$\alpha$.

Finally, we prove that the
one-dimensional distribution of
$B^H_{\alpha,\beta}$ satisfies a heat
equation with non-constant coefficients
with time-derivative replaced by the
 Hadamard derivative of Caputo
 type of order
$\beta$. This result can be compared
with the master equation, which was
proved in
\cite{MUR} to be satisfied by the
one-dimensional distribution of the
ggBm, and later generalized in
\cite{BEN}.

\begin{theorem}
Let
$\thinspace^{H}D_{0+,t}^{\beta }$
be the  $($left-sided$)$ Hadamard
derivative of Caputo type of order
$\beta \in (0,1),$ defined in
$(\ref{cap}),$ with respect to
$t$. The transition density of
$B^H_{\alpha ,\beta
}$ satisfies, for any
$\alpha,$
the following differential equation
\begin{equation}
^{H}D_{0+,t}^{\beta }u(x,t)=
\frac{ t}{2}\frac{\partial
^{2}}{\partial x^{2}}u(x,t), \label{pde}
\end{equation}
with initial condition
$u(x,0)=\delta (x),$ where
$\delta (\cdot
)$ is the Dirac's delta function.
\end{theorem}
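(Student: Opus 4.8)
The plan is to pass to the Fourier transform in the space variable $x$ and reduce \eqref{pde} to a purely temporal (fractional) identity for the characteristic function. Denote by $\hat{u}(\theta,t):=\int_{\R}e^{i\theta x}u(x,t)\,dx$ the spatial Fourier transform; by \eqref{is} we already know that $\hat{u}(\theta,t)=\Phi_t(\theta)=\mathcal{R}_\beta(-\theta^2 t/2)$. Since the spatial and temporal operators act on independent variables, transforming \eqref{pde} in $x$ turns the right-hand side into $-\frac{t\theta^2}{2}\hat{u}(\theta,t)$, so it suffices to establish
\begin{equation*}
{}^{H}D_{0+,t}^{\beta}\,\mathcal{R}_\beta\!\left(-\tfrac{\theta^2 t}{2}\right)=-\frac{t\theta^2}{2}\,\mathcal{R}_\beta\!\left(-\tfrac{\theta^2 t}{2}\right)
\end{equation*}
and then to invert. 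This is precisely the scaling version of the eigenfunction relation ${}^{H}D_{0+}^{\beta}\mathcal{R}_\beta(t)=t\,\mathcal{R}_\beta(t)$ quoted in the Introduction.

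The core computation is the Hadamard--Caputo power rule. Inserting $f(z)=z^{k}$ into \eqref{cap} and substituting $w=\log(t/z)$ reduces the integral to an Euler Gamma integral, giving
\begin{equation*}
\left({}^{H}D_{0+}^{\beta}z^{k}\right)(t)=\frac{k\,t^{k}}{\Gamma(1-\beta)}\int_{0}^{\infty}w^{-\beta}e^{-kw}\,dw=k^{\beta}t^{k},\qquad k\geq 1,
\end{equation*}
while the constant term $k=0$ is annihilated. Applying this termwise to $\mathcal{R}_\beta(\lambda t)=\sum_{j\geq 0}\lambda^{j}t^{j}/(j!)^{\beta}$ with $\lambda=-\theta^{2}/2$, and using the identity $j^{\beta}/(j!)^{\beta}=1/((j-1)!)^{\beta}$ to shift the summation index, I obtain $\sum_{j\geq 1}\lambda^{j}t^{j}/((j-1)!)^{\beta}=\lambda t\,\mathcal{R}_\beta(\lambda t)$, which is exactly the required identity. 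The initial condition is then immediate: $\hat{u}(\theta,0)=\mathcal{R}_\beta(0)=1$ for every $\theta$, so $u(x,0)=\delta(x)$ by Fourier inversion.

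The main obstacle is analytic rather than algebraic: I must justify interchanging the Hadamard--Caputo operator (an improper integral in $z$) with the infinite series defining $\mathcal{R}_\beta$, and likewise the commutation of $\mathcal{F}$ with ${}^{H}D_{0+,t}^{\beta}$. Because $\mathcal{R}_\beta$ is entire and its termwise differentiated series still converges locally uniformly (the property exploited in the proof of Lemma~\ref{lem3.2}), a dominated-convergence argument controlled by $\int_{0}^{t}(\log(t/z))^{-\beta}\,dz<\infty$ legitimizes the termwise computation on each interval $(0,t]$, uniformly on compact $\theta$-sets. The same uniform bound guarantees that the transform and the time-fractional operator commute and that Fourier inversion applies, thereby reducing \eqref{pde} to the scaling eigenfunction identity and completing the argument.
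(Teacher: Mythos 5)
Your proposal is correct and follows essentially the same route as the paper: the Hadamard--Caputo power rule $({}^{H}D_{0+}^{\beta}z^{k})(t)=k^{\beta}t^{k}$, applied termwise to the Le Roy series to obtain the eigenvalue relation for $\widehat{u}(\theta,t)=\mathcal{R}_\beta(-\theta^2 t/2)$, followed by Fourier inversion. The only cosmetic difference is that the paper justifies the series--integral interchange via explicit uniform-convergence estimates (Rudin, Theorems 7.11 and 7.17) rather than your dominated-convergence phrasing, but both are standard and adequate.
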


\begin{proof}[\bf Proof]
Observe that
$(^{H}D_{0+}^{\beta }t^{\kappa })(x)
=\kappa ^{\beta
}x^{\kappa }$. Then,
\begin{align}
^{H}D_{0+,z}^{\beta }\mathcal{R}_{\beta }
\left ( sz
\right )  &=  ^{H}D
_{0+,z}^{\beta }\sum_{j=0}^{\infty }
\frac{(sz)^{j}}{(j!)^{\beta }}\,
\label{had2} \\
&=  \frac{1}{\Gamma (1-\beta)}\int_{0}^{z}
\left ( \log \frac{z}{t}
\right ) ^{-\beta }\sum_{j=1}^{\infty }
\frac{js^{j}t^{j-1}}{(j!)^{\beta }}dt,  \notag
\end{align}
where the interchange of derivative
 and series is allowed by the
uniform convergence of the series on
$(0,z)$ (see Theorem 7.17 in
\cite{RUD})$.$ In view of Theorem 7.11 in
\cite{RUD}, applied to
the limit point
$z$ of
$(0,z)$, we can interchange integration and
summation in the last integral:
\begin{align*}
&\lim_{x\rightarrow z}\int_{0}^{x}
\Big ( \log \frac{z}{t}
\Big )
^{-\beta }\lim_{n\rightarrow \infty }
\sum_{j=1}^{n}\frac{js^{j}t^{j-1}}{
(j!)^{\beta }}dt \\
&=\sum_{j=1}^{\infty }\frac{js^{j}}{(j!)^{\beta
}}\lim_{x\rightarrow z}\int_{0}^{x}
\Big ( \log \frac{z}{t}
\Big )
^{-\beta }t^{j-1}dt.
\end{align*}
Indeed, we have uniform convergence
 of the sequence
$\left\{ f_{n}(x)\right\} _{n\geq 1},$
where
\begin{equation*}
f_{n}(x)
:=\int_{0}^{x}
\left ( \log \frac{x}{t}
\right ) ^{-\beta }\sum_{j=1}^{n}
\frac{js^{j}t^{j-1}}{(j!)^{\beta }}dt
=sx\Gamma (1-\beta )\sum_{l=0}^{n-1}
\frac{(sx)^{l}}{(l!)^{\beta }},
\end{equation*}
as
\begin{equation*}
\sup_{0<x<z}\left\vert
f_{n}(x)-f_{m}(x)\right\vert \leq
|s|z\Gamma (1-\beta )
\sum_{l=m}^{n-1}\frac{(|s|z)^{l}}{(l!)^{\beta
}},
\end{equation*}
tends to zero, for
$m,n\rightarrow \infty ,$ by the
convergence of the series to
the Le Roy function.

Therefore, the characteristic
function of the LHm
$\left\{ B_{\alpha ,\beta
}(t)\right\}
_{t\geq 0}$, given in (\ref{is}),
satisfies the equation
\begin{equation}
^{H}D_{0+,t}^{\beta }\widehat{u}(\theta ,t)
=-\frac{ \theta ^{2}t}{2}
\widehat{u}(\theta ,t),  \label{had3}
\end{equation}
with the initial condition
$\widehat{u}(\theta ,0)=1$. Taking the
inverse Fourier transform of (\ref{had3}),
we obtain that the transition density of
$B^H_{\alpha ,\beta
}$ satisfies equation (\ref{pde}) with
$u(x,0)=\delta(x)$.
\end{proof}

\subsection{Le Roy-Hadamard noise}

In order to prove the existence and
an integral formula for the
distributional derivative of the LHm in
$(\mathcal{S})^{-1}_{\nu_{\beta}}$,
we first evaluate  the
$S_{\nu_{\beta}}$-transform of
$B^H_{\alpha,\beta}$, which holds for any
$\alpha \in (0,2)$.

\begin{lem} \label{lem:StrasnfHadRoyBm}
   For
$\alpha \in (0,1) \cup (1,2)$,
$\beta \in (0,1)$ and
$\xi \in \mathcal{S}_\mathbb{C}$, the
$S_{\nu_\beta}$-transform of
$B^H_{\alpha,\beta}$ reads
    \begin{equation}
S_{\nu_{\beta}}(B^H_{\alpha,\beta}(t))(\xi)
= K_\alpha C_{\xi,\beta} t
\Big (\thinspace ^{H}
\mathcal{I}_{0+,1}^{(1+\alpha)/2}\xi
\Big )(t), \qquad \xi \in
 \mathcal{S}_\mathbb{C}, \label{ult}
    \end{equation}
    where
$C_{\xi,\beta}
:=\frac{\mathcal{R}_\beta ' (
 \langle\xi,\xi\rangle/2)}{\mathcal{R}_\beta (
 \langle\xi,\xi\rangle/2)}$,
$K_\alpha = \frac{\Gamma((\alpha +1)/2)}
{\sqrt{\Gamma(\alpha)}}$, and
$\thinspace ^{H}\mathcal{I}_{0+,\mu}^{\nu}$
is the left-sided Hadamard-type integral of order
$\nu >0$ and parameter
$\mu \geq 0$, defined in \eqref{hadint}, and with
$\mathcal{R}_\beta ' (x)
:=\left.\frac{d}{dz}\mathcal{R}_\beta
 (z)\right\vert_{z=x}$.
\end{lem}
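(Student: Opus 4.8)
The plan is to unwind the definition of the $S_{\nu_\beta}$-transform and split the computation into two independent pieces: an integral against $\nu_\beta$ that generates the scalar factor $C_{\xi,\beta}$, and the explicit evaluation of the pairing $\langle\xi,\,{}^H\mathcal{M}^{\alpha/2}_-1_{[0,t)}\rangle$ as a left-sided Hadamard integral. Write $f:={}^H\mathcal{M}^{\alpha/2}_-1_{[0,t)}$, which belongs to $L^2(\mathbb{R}_+)$ by Lemma \ref{lem2.1}. By the definition of the $S_{\nu_\beta}$-transform,
\[
S_{\nu_\beta}\!\left(B^H_{\alpha,\beta}(t)\right)(\xi)=\frac{1}{\mathcal{R}_\beta(\langle\xi,\xi\rangle/2)}\int_{\mathcal{S}'}e^{\langle\omega,\xi\rangle}\,\langle\omega,f\rangle\,d\nu_\beta(\omega),
\]
so the whole statement reduces to evaluating the integral on the right.

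The key step is to produce the factor $\langle\omega,f\rangle$ by differentiating the Laplace transform of $\nu_\beta$. For real $s$ I would consider the map $s\mapsto \ell_{\nu_\beta}(\xi+sf)=\int_{\mathcal{S}'}e^{\langle\omega,\xi+sf\rangle}d\nu_\beta(\omega)=\mathcal{R}_\beta\!\big(\langle\xi+sf,\xi+sf\rangle/2\big)$, where the last equality is \eqref{le} extended from $\mathcal{S}_\mathbb{C}$ to $L^2$-arguments by the same continuity argument used to extend the pairing $\langle\omega,\cdot\rangle$ to $L^2(\mathbb{R})$. Differentiating at $s=0$ and using $\tfrac{d}{ds}\langle\xi+sf,\xi+sf\rangle|_{s=0}=2\langle\xi,f\rangle$ gives
\[
\int_{\mathcal{S}'}e^{\langle\omega,\xi\rangle}\langle\omega,f\rangle\,d\nu_\beta(\omega)=\mathcal{R}_\beta'\!\left(\frac{\langle\xi,\xi\rangle}{2}\right)\langle\xi,f\rangle,
\]
so that, dividing by $\mathcal{R}_\beta(\langle\xi,\xi\rangle/2)$, we obtain $S_{\nu_\beta}(B^H_{\alpha,\beta}(t))(\xi)=C_{\xi,\beta}\,\langle\xi,f\rangle$ with $C_{\xi,\beta}$ exactly as in the statement. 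I expect the interchange of $\tfrac{d}{ds}$ and $\int d\nu_\beta$ to be the main obstacle; I would justify it by dominated convergence, the dominating function being supplied by the exponential integrability established in Lemma \ref{Le1}. Alternatively, one may expand $e^{\langle\omega,\xi\rangle}$ in its power series and integrate term by term, evaluating each $\int\langle\omega,\xi\rangle^{2m-1}\langle\omega,f\rangle\,d\nu_\beta$ through the moment formula \eqref{ss2}; this reproduces the same closed form and sidesteps differentiation under the integral.

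It then remains to evaluate $\langle\xi,f\rangle$. By Lemma \ref{lem2.1} (formulae \eqref{had4} and \eqref{had6} with $a=0$, $b=t$, the $a$-term vanishing) both ranges $\alpha\in(0,1)$ and $\alpha\in(1,2)$ yield the single expression
\[
\left({}^H\mathcal{M}^{\alpha/2}_-1_{[0,t)}\right)(x)=\frac{K_\alpha}{\Gamma((\alpha+1)/2)}\left(\log\frac{t}{x}\right)_+^{(\alpha-1)/2},
\]
whence
\[
\langle\xi,f\rangle=\frac{K_\alpha}{\Gamma((\alpha+1)/2)}\int_0^t\xi(x)\left(\log\frac{t}{x}\right)^{(\alpha-1)/2}dx.
\]
Comparing with the definition \eqref{hadint} of the left-sided Hadamard-type integral with order $\gamma=(1+\alpha)/2$ and parameter $\mu=1$, one has $({}^H\mathcal{I}^{(1+\alpha)/2}_{0+,1}\xi)(t)=\frac{1}{t\,\Gamma((1+\alpha)/2)}\int_0^t\xi(z)(\log(t/z))^{(\alpha-1)/2}\,dz$, so the $\Gamma((1+\alpha)/2)$ factors cancel and $\langle\xi,f\rangle=K_\alpha\,t\,({}^H\mathcal{I}^{(1+\alpha)/2}_{0+,1}\xi)(t)$. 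Substituting into $C_{\xi,\beta}\langle\xi,f\rangle$ yields \eqref{ult}.

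Beyond the differentiation-under-the-integral point, the computation is routine: the pairing evaluation is unified over the two $\alpha$-ranges by Lemma \ref{lem2.1}, and the only algebraic input is the cancellation of the gamma factors between Lemma \ref{lem2.1} and the definition \eqref{hadint}, so that no further special-function identities are required.
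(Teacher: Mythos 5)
Your proposal is correct and follows essentially the same route as the paper: both differentiate $s\mapsto\ell_{\nu_\beta}(\xi+s\,{}^H\mathcal{M}^{\alpha/2}_-1_{[0,t)})$ at $s=0$, justify the interchange of derivative and integral by the exponential integrability of Lemma \ref{Le1} (the paper via an explicit $L^1(\nu_\beta)$ dominating bound obtained from H\"older), and then evaluate $\langle\xi,{}^H\mathcal{M}^{\alpha/2}_-1_{[0,t)}\rangle$ through Lemma \ref{lem2.1} and the cancellation of the $\Gamma((\alpha+1)/2)$ factors against definition \eqref{hadint} with $\mu=1$. The only difference is cosmetic: you phrase the domination via dominated convergence and offer a series/moment alternative, while the paper writes out the pointwise bound \eqref{intdev}; the substance is identical.
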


\begin{proof}[\bf Proof]
Since
$$
B^H_{\alpha,\beta}(t)=
 \langle \cdot, \thinspace^{H}
 \mathcal{M}_{-}^{\alpha/2 }1_{[0,t)}
 \rangle \in L^2(\nu_\beta) ,
 $$
  the
$S_{\nu_\beta}$-transform is well-defined for
$\xi \in U_{p,q} \subset \mathcal{S}_{\C}$. For
$\omega \in \mathcal{S}'$ and
$s \in [-1,1]$, let
$$
f(\omega,s)
:=\exp(
 \langle \omega, \xi + s
 \thinspace^{H}\mathcal{M}_{-}^{\alpha/2 }
 1_{[0,t)}\rangle) \in L^1(\nu_\beta) .
 $$
  In view of what follows,
$f(\omega,s)$ is differentiable with respect to
$s$, and its derivative is in
$L^1(\nu_\beta)$: indeed we observe that,
for all
$\omega \in \mathcal{S}^{\prime}$,
    \begin{align}
\Big | \frac{d}{ds} f(\omega,s)
\Big |
&\leq |
 \langle \omega, \thinspace^{H}
 \mathcal{M}_{-}^{\alpha/2 }1_{[0,t)}\rangle| \exp(
 \langle \omega, \Re(\xi) \rangle + |
 \langle \omega, \thinspace^{H}
 \mathcal{M}_{-}^{\alpha/2 }1_{[0,t)}\rangle|) \notag\\
        &\leq \exp(
 \langle \omega, \Re(\xi) \rangle + 2|
 \langle \omega, \thinspace^{H}
 \mathcal{M}_{-}^{\alpha/2 }1_{[0,t)}\rangle|)
 \in L^1(\nu_\beta), \label{intdev}
    \end{align}
  by applying the H\"{o}lder inequality, as
$\exp(
 \langle \cdot, \Re(\xi) \rangle)$,
$ \exp(2|
 \langle \cdot, \thinspace^{H}
 \mathcal{M}_{-}^{\alpha/2 }1_{[0,t)}\rangle|)$
  are in
$L^2(\nu_\beta)$, by Lemma \ref{Le1}.
   Finally, by noting that
\[
   \left.\frac{d}{ds}
    f(\omega,s)\right\vert _{s=0}=
 \langle \omega,\thinspace^{H}
 \mathcal{M}_{-}^{\alpha/2 }1_{[0,t)}\rangle e^{
 \langle \omega, \xi \rangle},
\]
   we get
\begin{align*}
S_{\nu_{\beta}}& (B^H_{\alpha,\beta}(t))(\xi)
 =   \frac{1}{\mathcal{R}_\beta(
 \langle \xi,\xi \rangle/2)}
 \int_{\mathcal{S}^\prime}
 \langle \omega,
 \thinspace^{H}
 \mathcal{M}_{-}^{\alpha/2 }1_{[0,t)}\rangle e^{
 \langle \omega, \xi \rangle}
  \nu_\beta(d \omega)\\
 &=
  \frac{1}{\mathcal{R}_\beta(
 \langle \xi,\xi \rangle/2)}
 \int_{\mathcal{S}^\prime}\left.
 \frac{d}{ds} f(\omega,s)
 \right\vert_{ s=0}\nu_\beta(d \omega)\\
       &=  \frac{1}{\mathcal{R}_\beta(
 \langle \xi,\xi \rangle/2)}
 \left.
 \frac{d}{ds}\int_{\mathcal{S}^\prime}
  f(\omega,s)\nu_\beta(d \omega)\right\vert_{ s=0}\\
       &=  \frac{1}{\mathcal{R}_\beta(
 \langle \xi,\xi \rangle/2)}
 \frac{d}{ds}\mathcal{R}_\beta
\Big (\frac{1}{2}
 \langle \xi + s \thinspace^{H}
 \mathcal{M}_{-}^{\alpha/2 }1_{[0,t)},\xi
  + \left. s \thinspace^{H}
  \mathcal{M}_{-}^{\alpha/2 }1_{[0,t)}\rangle
\Big )\right\vert_{ s=0}\\
       &= C_{\xi,\beta}
 \langle \xi, \thinspace^{H}
 \mathcal{M}_{-}^{\alpha/2 }1_{[0,t)}\rangle,
   \end{align*}
   where the interchange of integral and
 derivative is allowed by \eqref{intdev}.
 Formula \eqref{ult} is obtained, by
 \eqref{had4}, \eqref{had6} and
 \eqref{ma}, as follows
    \begin{equation}
S_{\nu_{\beta}}(B^H_{\alpha,\beta}(t))(\xi)
=\frac{C_{\xi,\beta}}
{\sqrt{\Gamma (\alpha)}} \int_0^t \xi(s)
\Big (\log\frac{t}{s}
\Big )^{ { \frac{\alpha-1 }{2} } }
ds<\infty, \label{id}
    \end{equation}
     as any Schwartz function is
      uniformly continuous on
$\mathbb{R}$ and thus belongs to
$AC[0,t],$ for any
$t>0$, and
$\int_0^t
\left (\log\frac{t}{s}
\right )^{ { \frac{\alpha-1 }{2} } }ds<\infty,$ for
$\alpha \in (0,2).$
\end{proof}

\begin{theorem}
    Let
$\alpha\in (0,2)$ and
$\beta \in (0,1)$, then
$B^H_{\alpha,\beta}$ is differentiable in
$(\mathcal{S})^{-1}_{\nu_{\beta}}$
and we define the Le Roy-Hadamard noise as
\begin{equation}\mathscr{N}^{\alpha,\beta}_t
:= \lim_{h\to 0}
\frac{B^H_{\alpha,\beta}(t+h)-
B^H_{\alpha,\beta}(t)}{h}.
\label{eqlimitIntegralcase}
\end{equation}
Moreover, let
\[^{H}
\mathcal{M}_{0+,1}^{\alpha/2}
:=\begin{cases}
K_\alpha \thinspace^{H}
\mathcal{D}_{0+,1}^{ { \frac{1-\alpha}{2} } },
 \qquad \alpha \in (0,1), \\
K_\alpha \thinspace^{H}
\mathcal{I}_{0+,1}^{ { \frac{\alpha-1 }{2} } },
 \qquad \alpha \in (1,2),\end{cases}
\]
where
$\thinspace ^{H}
\mathcal{I}_{0+,\mu}^{\nu}$ $($resp.\
$
\mathcal{D}_{0+,\mu}^{\nu} )$ is the left-sided
Hadamard-type integral $($resp.\ derivative$)$ of order
$\nu >0$ and parameter
$\mu \geq 0$, defined in \eqref{hadint}
 $($resp.\ $\eqref{had});$ then,
we have that, for every
$\xi \in
\mathcal{S}_{\C}$, \begin{equation}
S_{\nu_\beta}(\mathscr{N}^{\alpha,\beta}_t)(\xi)
=C_{\xi,\beta}
\left (\thinspace ^{H}
\mathcal{M}_{0+,1}^{\alpha/2}\xi
\right )(t). \label{eqStrasfDerNoise}
\end{equation}
\end{theorem}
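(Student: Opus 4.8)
The plan is to push the entire statement through the $S_{\nu_\beta}$-transform, where the problem collapses to an ordinary $t$-derivative of an explicit integral, and then to invoke the convergence criterion of Lemma~\ref{thmStransConverg} (together with the isomorphism of Corollary~\ref{thmStransisomor}) to transport the convergence back to $(\mathcal{S})^{-1}_{\nu_\beta}$. Fix $t>0$ and a sequence $h_n\to 0$, and set $\Phi_n:=h_n^{-1}\big(B^H_{\alpha,\beta}(t+h_n)-B^H_{\alpha,\beta}(t)\big)\in L^2(\nu_\beta)\subset(\mathcal{S})^{-1}_{\nu_\beta}$. By linearity of $S_{\nu_\beta}$ and the integral form \eqref{id} of Lemma~\ref{lem:StrasnfHadRoyBm}, I would write
\[
S_{\nu_\beta}(\Phi_n)(\xi)=\frac{C_{\xi,\beta}}{\sqrt{\Gamma(\alpha)}}\,\frac{F_\xi(t+h_n)-F_\xi(t)}{h_n},\qquad F_\xi(s):=\int_0^s\Big(\log\tfrac{s}{z}\Big)^{(\alpha-1)/2}\xi(z)\,dz,
\]
so that everything reduces to understanding the $s$-derivative of $F_\xi$.

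The next step is to identify the pointwise limit. Since $\xi\in\mathcal{S}_{\C}$ is smooth and the kernel is integrable on $(0,s)$ for $\alpha\in(0,2)$, the map $s\mapsto F_\xi(s)$ is $C^1$, so $S_{\nu_\beta}(\Phi_n)(\xi)\to C_{\xi,\beta}\,F_\xi'(t)/\sqrt{\Gamma(\alpha)}$. The key observation is that $F_\xi(s)=\Gamma\!\big(\tfrac{\alpha+1}{2}\big)\,s\,(^{H}\mathcal{I}_{0+,1}^{(1+\alpha)/2}\xi)(s)$ by \eqref{hadint}, and that the definition \eqref{had} of the Hadamard-type derivative with $\mu=1$, $\gamma=(1-\alpha)/2$, $n=1$ reads exactly $(^{H}\mathcal{D}_{0+,1}^{(1-\alpha)/2}\xi)(s)=\tfrac{d}{ds}\big[s\,(^{H}\mathcal{I}_{0+,1}^{(1+\alpha)/2}\xi)(s)\big]$; this settles the case $\alpha\in(0,1)$. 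For $\alpha\in(1,2)$ I would differentiate $F_\xi$ under the integral sign, the boundary term vanishing because $(\alpha-1)/2>0$, and recognize $\Gamma\!\big(\tfrac{\alpha+1}{2}\big)(^{H}\mathcal{I}_{0+,1}^{(\alpha-1)/2}\xi)(s)$ after the change of variable $z=se^{-w}$, using $\tfrac{\alpha-1}{2}\Gamma\!\big(\tfrac{\alpha-1}{2}\big)=\Gamma\!\big(\tfrac{\alpha+1}{2}\big)$. In both ranges $C_{\xi,\beta}\,F_\xi'(t)/\sqrt{\Gamma(\alpha)}=C_{\xi,\beta}K_\alpha(^{H}\mathcal{M}_{0+,1}^{\alpha/2}\xi)(t)=C_{\xi,\beta}(^{H}\mathcal{M}_{0+,1}^{\alpha/2}\xi)(t)$, which is the right-hand side of \eqref{eqStrasfDerNoise}; this gives the pointwise limit and hence condition~i) of Lemma~\ref{thmStransConverg}.

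It remains to verify condition~ii). For holomorphy, I would choose $p,q$ so that $|\langle\xi,\xi\rangle|$ is small enough on $U_{p,q}$ to ensure $\mathcal{R}_\beta(\langle\xi,\xi\rangle/2)\neq 0$ (possible since $\mathcal{R}_\beta(0)=1$ and $\mathcal{R}_\beta$ is entire); on this common $U_{p,q}$ the factor $\xi\mapsto C_{\xi,\beta}$ is holomorphic (a quotient of entire functions of the holomorphic quantity $\langle\xi,\xi\rangle$, with non-vanishing denominator) and $\xi\mapsto F_\xi(s)$ is linear, so every $S_{\nu_\beta}(\Phi_n)$ is holomorphic. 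The uniform bound is the heart of the argument. Using $z=se^{-w}$ I would rewrite $F_\xi(s)=\int_0^\infty w^{(\alpha-1)/2}e^{-w}\psi_w(s)\,dw$ with $\psi_w(s):=s\,\xi(se^{-w})$, whence
\[
\frac{F_\xi(t+h)-F_\xi(t)}{h}=\int_0^\infty w^{(\alpha-1)/2}e^{-w}\,\frac{\psi_w(t+h)-\psi_w(t)}{h}\,dw .
\]
Since $\psi_w'(s)=\xi(se^{-w})+se^{-w}\xi'(se^{-w})$ and $e^{-w}\le 1$, the inner difference quotient is bounded for $h\in(0,1)$ by $\|\xi\|_\infty+(t+1)\|\xi'\|_\infty$ uniformly in $w$ and $h$, while $\int_0^\infty w^{(\alpha-1)/2}e^{-w}\,dw=\Gamma\!\big(\tfrac{\alpha+1}{2}\big)<\infty$ for $\alpha\in(0,2)$. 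The sup-norms $\|\xi\|_\infty,\|\xi'\|_\infty$ are controlled by a Hilbertian seminorm $|\xi|_p$ (nuclearity), hence bounded on $U_{p,q}$, and $|C_{\xi,\beta}|$ is bounded there as well; together these produce a constant $C$ with $|S_{\nu_\beta}(\Phi_n)(\xi)|\le C$ for all $n$ and all $\xi\in U_{p,q}$.

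I expect the \emph{uniform bound} to be the main obstacle, above all for $\alpha\in(0,1)$: there a naive differentiation of $F_\xi$ under the integral produces the non-integrable-looking singular kernel $(\log(s/z))^{(\alpha-3)/2}$, and the substitution $z=se^{-w}$ is precisely what avoids differentiating the singular kernel by transferring the $s$-dependence onto the smooth factor $\psi_w$. Once conditions~i) and~ii) hold, Lemma~\ref{thmStransConverg} yields strong convergence of $\Phi_n$ in $(\mathcal{S})^{-1}_{\nu_\beta}$ to a limit $\mathscr{N}^{\alpha,\beta}_t$ whose $S_{\nu_\beta}$-transform is the pointwise limit $C_{\xi,\beta}(^{H}\mathcal{M}_{0+,1}^{\alpha/2}\xi)(t)$, which is \eqref{eqStrasfDerNoise}. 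Finally, since this limiting transform is the same for every sequence $h_n\to0$, the injectivity in Corollary~\ref{thmStransisomor} forces the strong limit to be independent of the sequence, so the limit \eqref{eqlimitIntegralcase} exists as a genuine $h\to0$ limit, establishing the differentiability of $B^H_{\alpha,\beta}$ in $(\mathcal{S})^{-1}_{\nu_\beta}$.
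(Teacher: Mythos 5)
Your proposal is correct and follows the same overall architecture as the paper (compute $S_{\nu_\beta}$ of the difference quotients $\Phi_n$, identify the pointwise limit, verify conditions i) and ii) of Lemma~\ref{thmStransConverg}), but the technical execution at the two delicate points is genuinely different. For the pointwise limit with $\alpha\in(0,1)$, the paper applies l'H\^opital to reduce to $\frac{d}{dz}[z(^{H}\mathcal{I}_{0+,1}^{(1+\alpha)/2}\xi)(z)]$, invokes Lemma~2.34 of \cite{KIL} for a.e.\ existence, and then passes through the Marchaud--Hadamard equivalence \eqref{MH}--\eqref{MH2} plus dominated convergence to get continuity at $t$; you instead observe that $F_\xi(s)=\Gamma(\tfrac{\alpha+1}{2})\,s\,(^{H}\mathcal{I}_{0+,1}^{(1+\alpha)/2}\xi)(s)$ and that \eqref{had} with $\mu=1$, $n=1$ literally says $(^{H}\mathcal{D}_{0+,1}^{(1-\alpha)/2}\xi)(s)=\frac{d}{ds}[s(^{H}\mathcal{I}_{0+,1}^{(1+\alpha)/2}\xi)(s)]$, with existence of the derivative supplied by the substitution $z=se^{-w}$ and differentiation under the integral. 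For the uniform bound, the paper controls the \emph{limit} expressions $(^{H}\mathcal{D}_{0+,1}^{(1-\alpha)/2}\xi)(t)$ and $(^{H}\mathcal{I}_{0+,1}^{(\alpha-1)/2}\xi)(t)$ (via the Marchaud form and the inequality $\log(t/(t-y))>y/t$), whereas your representation $F_\xi(s)=\int_0^\infty w^{(\alpha-1)/2}e^{-w}\psi_w(s)\,dw$ with $\psi_w(s)=s\xi(se^{-w})$ bounds the difference quotients themselves by $\Gamma(\tfrac{\alpha+1}{2})(\|\xi\|_\infty+(t+1)\|\xi'\|_\infty)$ uniformly in $n$ --- which is precisely what condition ii) of Lemma~\ref{thmStransConverg} asks for, so on this point your argument is arguably tighter than the paper's. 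What your route buys is the complete avoidance of the Marchaud--Hadamard machinery and a single argument covering both ranges of $\alpha$; what the paper's route buys is an explicit display of the formulas \eqref{id2} and \eqref{snu} that are reused later in the Ornstein--Uhlenbeck section. One typographical slip: in the chain $C_{\xi,\beta}F_\xi'(t)/\sqrt{\Gamma(\alpha)}=C_{\xi,\beta}K_\alpha(^{H}\mathcal{M}_{0+,1}^{\alpha/2}\xi)(t)=C_{\xi,\beta}(^{H}\mathcal{M}_{0+,1}^{\alpha/2}\xi)(t)$ the middle term should carry $^{H}\mathcal{D}_{0+,1}^{(1-\alpha)/2}$ (resp.\ $^{H}\mathcal{I}_{0+,1}^{(\alpha-1)/2}$) rather than $^{H}\mathcal{M}_{0+,1}^{\alpha/2}$, since $K_\alpha$ is already absorbed into $^{H}\mathcal{M}_{0+,1}^{\alpha/2}$; the final identity is the correct one.
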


\begin{proof}[\bf Proof]
    Let
$\left\{\Phi_n \right\}_{n\geq 1}$ be defined as
$$
\Phi_n
:=\frac{B^H_{\alpha,\beta}(t+h_n)-
B^H_{\alpha,\beta}(t)}{h_n},
$$
 for
$t\geq 0$ and for a sequence
$\left\{h_n\right\}_{n \geq 1}$ such that
$\lim_{n \to  \infty}h_n =0$.

 i)
 For
$\alpha \in (0,1)$,
$\xi \in
\mathcal{S}_{\mathbb{C}}$,
we have, from \eqref{ult}, that
 \begin{align*}
 & \lim_{n \to \infty}
  S_{\nu_\beta}(\Phi_n (t))(\xi) \\
  &
  =
K_\alpha C_{\xi,\beta}
\lim_{n \to \infty}\frac{1}{h_n}\left[(t+h_n)
\left (\thinspace ^{H}
\mathcal{I}_{0+,1}^{(1+\alpha)/2}\xi
\right )(t+h_n)- t
\left (\thinspace ^{H}
\mathcal{I}_{0+,1}^{(1+\alpha)/2}\xi
\right )(t)\right].
 \end{align*}
By applying  the l'H\^{o}pital rule,
it is then enough to study
 \begin{equation} \lim_{x \to 0}
  \frac{d}{dz} \left.\left[ z
\left (\thinspace ^{H}
\mathcal{I}_{0+,1}^{(1+\alpha)/2}\xi
\right )(z) \right]\right\vert_{z=t+x}
= \lim_{x \to 0}
\Big (\thinspace ^{H}
\mathcal{D}_{0+,1}^{ { \frac{1-\alpha}{2} } }\xi
\Big  )(t+x) .
 \label{id2}
\end{equation}
The existence almost everywhere on
$[0,t]$,
$t>0$, is guaranteed by Lemma 2.34 in
\cite{KIL} and considering that
$\xi \in AC[0,t]$ (see
Remark \ref{rem:AbsoutelycontinousSchwartz}).
In order to derive equation
\eqref{eqStrasfDerNoise},
 we recall the equivalence on
$X_c^p$ between
$^{H}
\mathcal{D}_{0+,\mu}^{\gamma}$
and the left-sided
Marchaud-Hadamard type derivative
$^H\mathbb{D}_{0+,\mu}^\gamma$
 (defined in \eqref{MH2}),
for
$0<\gamma<1$ and
$ \mu \in \mathbb{R}$ (see equation \eqref{MH}).
The definition of Schwartz functions, i.e.,
$$
\sup_{z}
\Big \vert z^k \frac{d^m}{dz^m}\xi(z)
 \Big \vert \leq C_{k,m} ,
 $$
  for any
$k,m \in \mathbb{N}$, ensures that
$
\mathcal{S}
 \subseteq X_c^p$, for any
$p \in [1,\infty)$,
$c>0$, and thus \eqref{MH} is satisfied by
$\xi \in
\mathcal{S}$.  Finally, the continuity of
 the left-sided derivative follows by the
 application of the dominated convergence
 theorem to \eqref{MH2} with
$\gamma= { \frac{1-\alpha}{2} }
$,
$\mu=1$, and allows us to write that
$$
\lim_{x \to 0^+}
\Big  (\thinspace ^{H}
\mathcal{D}_{0+,1}^{ { \frac{1-\alpha}{2} } }\xi
\Big  )(t+x)=
\Big  (\thinspace ^{H}
\mathcal{D}_{0+,1}^{ { \frac{1-\alpha}{2} } }\xi
\Big  )(t) ,
\ \text{ for
$t>0$.}
$$

 ii)
For
$\alpha \in (1,2)$, we can write instead that
 \begin{equation} \lim_{n \to \infty}
 S_{\nu_\beta}(\Phi_n (t))(\xi)
=\frac{(\alpha-1)C_{\xi,\beta}}
{2t \sqrt{\Gamma(\alpha)}} \int_0^t \xi(s)
\Big (\log \frac{t}{s}
\Big )^{(\alpha-3)/2}ds \label{snu}
\end{equation}
and equation \eqref{eqStrasfDerNoise}
follows from \eqref{snu} as
\begin{align}
\lim_{x \to 0} & \frac{1}{x}
\Big [\int_0^{t+x} \xi(s)
\Big  (\log\frac{t+x}{s}
\Big  )^{ { \frac{\alpha-1 }{2} } }ds-
 \int_0^t \xi(s)
\Big  (\log\frac{t}{s}
\Big  )^{ { \frac{\alpha-1 }{2} } }ds
 \Big ] \notag \\
= {} &\lim_{x \to 0} \frac{1}{x}
\int_0^{t} \xi(s) \Big [
\Big  (\log\frac{t+x}{s}
\Big  )^{ { \frac{\alpha-1 }{2} } }-
\Big  (\log\frac{t}{s}
\Big  )^{ { \frac{\alpha-1 }{2} } }
\Big ]ds  \notag \\
&+\lim_{x \to 0} \frac{1}{x}
 \int_t^{t+x} \xi(s)
\Big  (\log\frac{t+x}{s}
\Big  )^{ { \frac{\alpha-1 }{2} } }ds
 \notag \\
= {} &\lim_{x \to 0}
\frac{\alpha -1}{2(t+x)}\Big [\int_0^{t} \!  \xi(s)
\Big  (\log\frac{t}{s}
\Big  )^{(\alpha-3)/2} \! ds+ \!
 \int_t^{t+x}  \! \!  \xi(s)
\Big  (\log\frac{t+x}{s}
\Big  )^{(\alpha-3)/2}ds \Big ].  \notag
\end{align}
Since
$
\mathcal{R}_\beta (\cdot)$ is entire
(see
\cite{SIM}), there are
$p,q \in \mathbb{N}$ and
$K< \infty$ such that
$\left\vert C_{\xi,\beta} \right\vert \leq K$,
for any
$\xi \in U_{p,q}$.

Now, we must prove that
$$
\Big \vert
\Big (\thinspace ^{H}
\mathcal{D}_{+,1}^{ { \frac{1-\alpha}{2} } }\xi
\Big )(t)
\Big \vert \leq C_1
 \ \ \text{ and } \ \
\Big \vert
\Big  (\thinspace ^{H}
\mathcal{I}_{+,1}^{ { \frac{\alpha-1 }{2} } }\xi
\Big  )(t)
\Big \vert \leq C_2,
$$
 for any
$t >0$,
$\xi \in
\mathcal{S}_{\mathbb{C}}(\mathbb{R})$ and for
$C_1 , C_2 >0$.
As far as the integral case is concerned,
by considering the continuity of
$\xi(\cdot)$, we have that for
$\alpha \in (1,2)$ and
$\overline{\xi}
:= \max_{s \in \mathbb{R}} |\xi (s)|$,
$t>0,$
\begin{align*}
\Big \vert\frac{1}{t} \int_0^t \xi(s)
\Big  (\log \frac{t}{s}
\Big  )^{(\alpha-3)/2}ds\Big \vert
&\leq
\overline{\xi}\Big \vert\frac{1}{t} \int_0^t
\Big  (\log \frac{t}{s}
\Big  )^{(\alpha-3)/2}ds\Big \vert \notag \\
&=
\overline{\xi}\int_1^{\infty} \frac{1}{w^2}
\Big  (\log w
\Big  )^{(\alpha-3)/2}dw < \infty.
 \notag
\end{align*}
On the other hand, for the derivative
case and for
$\alpha \in (0,1)$, we resort again to
the equivalence \eqref{MH},
so that we can write
\begin{align*}
\Big \vert
\Big  (\thinspace ^{H}
\mathcal{D}_{+,1}^{ { \frac{1-\alpha}{2} } }\xi
\Big  )(t)
\Big \vert &  =
\Big \vert
\Big  (\thinspace ^{H}
\mathbb{D}_{+,1}^{ { \frac{1-\alpha}{2} } }\xi
\Big  )(t)
\Big \vert  \\
&
\leq
\frac{2\Gamma((\alpha+1)/2)}{1-\alpha}
\Big \vert \frac{1}{t} \int_0^t \frac{\xi(t)-\xi(z)}{
   ( \log\frac{t}{z}
  )^{(3-\alpha)/2}}dz
\Big \vert+\overline{\xi} < \infty \notag.
\end{align*}
Indeed, by considering that
$\log(t/(t-y))>y/t,$ for
$y \in (0,t)$,
\begin{align*}
\Big  \vert \frac{1}{t} \int_0^t
     \frac{\xi(t)-\xi(z)}{
\Big  ( \log\frac{t}{z}
\Big  )^{(3-\alpha)/2}}dz\Big \vert
&\leq
\int_0^t \frac{y}{t}\frac{|\xi(t)-\xi(t-y)|}{y}
\Big  ( \log\frac{t}{t-y}
\Big  )^{(\alpha-3)/2}dy \notag \\
    &\leq  \int_0^t \frac{|\xi(t)
    -\xi(t-y)|}{y}
\Big  ( \log\frac{t}{t-y}
\Big  )^{ { \frac{\alpha-1 }{2} } }dy
 \notag \\
    &\leq   \max_{y \in \mathbb{R}}
    |\xi'(y)|\int_0^t
\Big  ( \log\frac{t}{t-y}
\Big  )^{ { \frac{\alpha-1 }{2} } }dy
< \infty. \notag
\end{align*}
Therefore, for any
$\alpha \in (0,1)\cup (1,2)$ and
$n \in \mathbb{N},$ we have that
$$
 \left\vert
\left (S_{\nu_\beta}(\Phi_n)
\right )(\xi)\right\vert < \infty .
$$
Applying Lemma~\ref{thmStransConverg},
 the sequence
$\left\{\Phi_n \right\}_{n\geq 1}$
converges to some distribution
$\mathscr{N}^{\alpha,\beta}_t$ in
$(
\mathcal{S})^{-1} _{\nu_\beta}$ and
\begin{align*}
 S_{\nu_\beta}(\mathscr{N}^{\alpha,\beta}_t)
(\xi)=\lim_{n \to \infty }
\left (S_{\nu_\beta}(\Phi_n)
\right ) ,
\ \text{ for
$\xi \in U_{p,q}$.}
\tag*{\qedhere}
\end{align*}
\end{proof}

\begin{remark}\rm
   By considering formula \eqref{ult}
   in the limiting case
$\beta=1$, we obtain the
$
\mathcal{S}_{\nu}$-transform of the H-fBm
$B^{H}_{\alpha}$ (where
$\nu=\nu_1$ is the white-noise measure):
\[
 S_{\nu}(B^{H}_{\alpha}(t))(\xi)=K_\alpha t
\left (\thinspace ^{H}
\mathcal{I}_{0+,1}^{(1+\alpha)/2}\xi
\right )(t),  \qquad t>0, \xi \in
\mathcal{S}_{\mathbb{C}}. \notag
\]
    On the other hand, for
$\alpha=\beta=1$, it coincides with
that of the Brownian motion, that is,
$$
S_{\nu_{\beta}}(B^H_{\alpha,\beta}(t))(\xi)
= \int_0^t \xi(s) ds.
$$
  Analogously, for
$\beta=1$, formula \eqref{eqStrasfDerNoise}
gives the
$
\mathcal{S}_{\nu}$-transform of the H-fBm's
 noise, i.e.,
$$
\mathscr{N}^{\alpha}_t
:=\lim_{h \to 0}\frac{B^{H}_{\alpha}(t+h)
-B^{H}_{\alpha}(t)}{h} ,
$$
 which thus reads
$$
S_{\nu}(\mathscr{N}^{\alpha}_t)(\xi)=
\big (\thinspace ^{H}
\mathcal{M}_{0+,1}^{\alpha/2}\xi
\big )(t) ,
$$
 for
$\alpha \in (0,1)\cup (1,2)$,
$t>0$ and
$\xi \in
\mathcal{S}_{\mathbb{C}}$, since
$C_{\xi,1}=1$.
\end{remark}

\subsection{The LH-Ornstein-Uhlenbeck process}
 As for the ggBm, thanks to
 representation \eqref{C1}, the LHm process
 can be considered as a randomly-scaled
  Gaussian process. This allows the
   application of the related
   Ornstein-Uhlenbeck process, described
    below, in modeling physical or
    biological systems. Indeed,
    the environment's
   heterogeneity causes anomalous
    diffusion which could display
    also peculiar memory property, see
\cite{DOVSSPCP19}. If the latter
makes these processes suitable for
depicting complex systems, on the other
hand the study of other theoretical
properties requires an ad hoc
approach (see e.g.
\cite{BBT23}) and deserves further developments.

By means of a procedure similar
to that presented in
\cite{BOC}, we start by defining
 the process
$Y_{\alpha,\beta}
:=\left\{ Y_{\alpha,\beta}(t)
\right\}_{t \geq 0}$,
as the solution to the following
Langevin equation driven by the
LHm (in integral form):
\begin{equation}\label{OU1}
Y_{\alpha,\beta}(t)=y_0-\theta
\int_0^t Y_{\alpha,\beta}(s)ds
+ \sigma B^H_{\alpha,\beta}(t),
 \quad t\geq 0, \end{equation}
    where
$\theta >0$ and
$\sigma \in \mathbb{R}$.
    We now apply the
$
\mathcal{S}_{\nu_\beta}$-transform
to \eqref{OU1} thanks to
  Corollary \ref{thmStransisomor}
  and Lemma \ref{lem:StrasnfHadRoyBm},
   so that, for
$\xi \in U_{p,q}=\{\xi \in
\mathcal{S}_{\mathbb{C}} \mid
2^q \|\xi\|_p^2<1 \}$,
we can write that
    \begin{align}\label{sb}
        \mathcal{S}_{\nu_\beta}
        (Y_{\alpha,\beta}(t))(\xi)
  & =
        y_0-\theta
\mathcal{S}_{\nu_\beta}
\Big (\int_0^t Y_{\alpha,\beta}(s) ds
\Big )(\xi) + \sigma
\mathcal{S}_{\nu_\beta}(
 B^H_{\alpha,\beta}(t))(\xi) \\
        &= y_0-\theta \int_0^t
\mathcal{S}_{\nu_\beta}
(Y_{\alpha,\beta}(s))(\xi) ds
+ \sigma K_\alpha C_{\xi,\beta} t
\Big (\thinspace ^{H}
\mathcal{I}_{0+,1}^{(1+\alpha)/2}\xi
\Big )(t), \notag
    \end{align}
  by applying Theorem~6 in
\cite{KSWY98}. In order to obtain
an ODE solved by the previous
$
\mathcal{S}_{\nu_\beta}$-transform,
we denote the latter as
$y(t)
:=
\mathcal{S}_{\nu_\beta}(Y_{\alpha,\beta}(s))$.
Thus, taking the first derivative
with respect to
$t$ and considering \eqref{id2}, for
$\alpha \in (0,1)$, we have that
\begin{equation}  y'(t)=
-\theta y(t)+\sigma K_\alpha C_{\xi,\beta}
\Big (\thinspace ^{H}
\mathcal{D}_{0+,1}^{ { \frac{1-\alpha}{2} } }\xi
\Big )(t), \quad t> 0, \label{ddt}
     \end{equation}
  with
$y(0)=y_0$. For
$\alpha \in (1,2)$, by recalling \eqref{ult}
 together with \eqref{id}, we have instead that
   \begin{align}   \frac{d}{dt}
\Big [t
\Big (\thinspace ^{H}
\mathcal{I}_{0+,1}^{(1+\alpha)/2}\xi
\Big )(t)\Big ]  & = \frac{1}
{K_\alpha \sqrt{\Gamma(\alpha)}}
\frac{d}{dt}  \int_0^t \xi(s)
\Big (\log\frac{t}{s}
\Big )^{ { \frac{\alpha-1 }{2} } }ds
 \label{sb2} \\
   & =  \frac{1}{\Gamma(
   { \frac{1-\alpha}{2} } ) t}\int_0^t \xi(s)
\Big (\log\frac{t}{s}
\Big )^{(\alpha-3)/2}ds=
\Big (\thinspace ^{H}
\mathcal{I}_{0+,1}^{
{ \frac{\alpha-1 }{2} } }\xi
\Big )(t) \notag,
    \end{align}
so that we obtain, for any
$\alpha \in (0,1) \cup (1,2)$,
\begin{equation}  y'(t)=-\theta y(t)
+\sigma C_{\xi,\beta}
\left (\thinspace ^{H}
\mathcal{M}_{0+,1}^{\alpha/2}\xi
\right )(t), \qquad t> 0. \notag
     \end{equation}

  Solving the above ODE, we write
 \begin{align}
 \label{OU3}
 y(t) & =   y_0e^{-\theta t}
 \\
 &
 \notag
 + \sigma C_{\xi,\beta}
\Big (\int_0^t
\Big (\thinspace ^{H}
\mathcal{M}_{0+,1}^{\alpha/2}\xi
\Big ) (s) ds - \theta \int_0^t
e^{\theta (s-t)} \int_0^s
\Big (\thinspace ^{H}
\mathcal{M}_{0+,1}^{\alpha/2}\xi
\Big ) (u) du \, ds
\Big ) \notag\\
        & =  y_0e^{-\theta t} + \sigma
\Big (
\mathcal{S}_{\nu_\beta}(Y_{\alpha,\beta}(t))(\xi)
 - \theta \int_0^t e^{\theta (s-t)}
\mathcal{S}_{\nu_\beta}(Y_{\alpha,\beta}(s))(\xi) ds
\Big ), \notag
        \end{align}
by taking into account that, by \eqref{sb}
 and \eqref{ddt}, for
$\alpha \in (0,1)$ (resp.\
\eqref{sb} and \eqref{sb2}, for
$\alpha \in (1,2)$),
\begin{equation}\label{de}
\int_0^t
\left (\thinspace ^{H}
\mathcal{M}_{0+,1}^{\alpha/2}\xi
\right ) (s) ds=\frac{1}{C_{\xi,\beta}}
\mathcal{S}_{\nu_\beta}(B^H_{\alpha,\beta}(t))(\xi).
\end{equation}

We now invert the
$
\mathcal{S}_{\nu_\beta}$-transform and
obtain from \eqref{OU3}, for any
$t\geq 0$, the solution to \eqref{de} as
\[Y_{\alpha,\beta}(t)=
y_0e^{-\theta t} + \sigma  B^H_{\alpha,\beta}(t)
- \theta \sigma \int_0^t e^{\theta (s-t)}
 B^H_{\alpha,\beta}(s) ds ,
\]
and we call \textit{LH-Ornstein-Uhlenbeck}
 the process
$Y_{\alpha, \beta}
:=\left\{Y_{\alpha, \beta} (t)\right\}_{t \geq 0}$.

By \eqref{de} and \eqref{id},
\begin{equation}
\int_0^t
\Big (\thinspace ^{H}
\mathcal{M}_{0+,1}^{\alpha/2}\xi
\Big ) (s) ds
= \frac{1}{\sqrt{\Gamma(\alpha)}} \int_0^t \xi(z)
\Big (\log \frac{t}{z}
\Big )^{ { \frac{\alpha-1 }{2} } }dz =
 \langle \xi, \thinspace ^{H}
 \mathcal{M}_{-}^{\alpha/2} 1_{[0,t)}
  \rangle. \notag
\end{equation}
Thus, by defining
\begin{align*}
h^{\alpha, \beta}_t (x)
:   & =
\Big (\thinspace^{H}
\mathcal{M}_{-}^{\alpha/2}1_{[0,t)}
\Big )(x)- \int^{t}_0 e^{\theta(s-t)}
\Big (\thinspace ^{H}
\mathcal{M}_{-}^{\alpha/2}1_{[0,s)}
\Big )(x)ds \notag \\
 & =  \frac{1}{\sqrt{\Gamma(\alpha)}}
\Big [
\Big (\log \frac{t}{x}
\Big )_+^{ { \frac{\alpha-1 }{2} } }-
 \int_x^t  e^{\theta(s-t)}
\Big (\log \frac{s}{x}
\Big )^{ { \frac{\alpha-1 }{2} } }ds
\Big], \notag
\end{align*}
for
$x \in \mathbb{R}^+$,
 we have
\[
Y_{\alpha,\beta}(t)=
y_0e^{-\theta t} + \sigma
 \langle \cdot, h^{\alpha, \beta}_t\rangle,
 \qquad t \geq 0,
\]
so that its characteristic function reads
\[
\mathbb{E}e^{i\sum_{j=1}^{n}
\kappa_j Y_{\alpha,\beta }(t_j)}=
  \exp
\Big  (iy_0 \sum_{j=1}^{n}\kappa_j e^{\theta t_j}
\Big  )
\mathcal{R}_{\beta }
\Big  (-\frac{\sigma^2}{2}
\Big \Vert \sum_{j=1}^{n}\kappa_j
h^{\alpha, \beta}_{t_j}\Big \Vert^2
\Big  ),  \notag
\]
for
$0\leq t_1<t_2<...<t_n$ and
$\kappa_j \in \mathbb{R},$ for
$j=1,...,n$.
Finally, it is easy to check that
$$
\mathbb{E}Y_{\alpha,\beta}(t)
=y_0 e^{-\theta t} ,
\ \text{ for any
$t \geq 0$,}
$$
 and
$$
cov(Y_{\alpha,\beta}(t),
Y_{\alpha,\beta}(s))=\sigma
 \langle h^{\alpha, \beta}_t ,
  h^{\alpha, \beta}_s \rangle ,
\ \text{ for
$s,t \geq 0$.}
$$

\smallskip

\noindent
{\bf Acknowledgments.}
The author F.P.\ thanks the GNAMPA
group of INdAM. The authors L.B.\ and
F.P.\ acknowledge financial support
under the National Recovery and Resilience
Plan (NRRP), Mission 4, Component 2,
Investment 1.1, Call for tender No.\ 104
published on 2.2.2022 by the Italian
 Ministry of University and Research (MUR),
 funded by the European
 Union – NextGenerationEU – Project Title
 ``Non–Markovian Dynamics and Non-local Equations'' –
 202277N5H9 - CUP: D53D23005670006 - Grant Assignment
 Decree No. 973 adopted on June 30, 2023,
 by the Italian Ministry of Ministry of
 University and Research (MUR).

\end{document}